\let\oldtocsection=\tocsection
\let\oldtocsubsection=\tocsubsection
\renewcommand{\tocsection}[2]{\hspace{0em}\oldtocsection{#1}{#2}}
\renewcommand{\tocsubsection}[2]{\hspace{1em}\oldtocsubsection{#1}{#2}}
\newtheorem{Thm}{Theorem}[section]
\newtheorem{Lem}[Thm]{Lemma}
\newtheorem{Cor}[Thm]{Corollary}
\newtheorem{Prop}[Thm]{Proposition}
\theoremstyle{remark}
\newtheorem{Rem}[Thm]{Remark}
\theoremstyle{remark}
\newtheorem{example}[Thm]{Example}
\theoremstyle{definition}
\newtheorem{Def}[Thm]{Definition}
\numberwithin{equation}{section}
\newcommand{\C}{\mathbb{C}}           
\newcommand{\N}{\mathbb{ N}}           
\newcommand{\Z}{\mathbb{ Z}}           
\newcommand{\tr}{\operatorname{trace}}
\newcommand{\spec}{\operatorname{Spec}}
\newcommand{\Char}{\operatorname{char}}
\newcommand{\diag}{\operatorname{diag}}
\newcommand{\res}{\operatorname{res}}
\newcommand{\Gr}{\operatorname{Gr }}
\newcommand{\ev}{\operatorname{ev}}
\newcommand{\mult}{\operatorname{mult}}
\newcommand{\codim}{\operatorname{codim}}
\newcommand{\GC}{\operatorname{GC}}
\newcommand{\GD}{\operatorname{GD}}
\newcommand{\GL}{\operatorname{GL}}
\newcommand{\SL}{\operatorname{SL}}
\newcommand{\SO}{\operatorname{SO}}
\newcommand{\Sp}{\operatorname{Sp}}
\newcommand{\Grass}{\operatorname{Grass }}
\newcommand{\OG}{\operatorname{OG}}
\newcommand{\fb}{{\mathfrak b}}
\newcommand{\fg}{{\mathfrak g}}
\newcommand{\fh}{{\mathfrak h}}
\newcommand{\fl}{{\mathfrak l}}
\newcommand{\fm}{{\mathfrak m}}
\newcommand{\fn}{{\mathfrak n}}
\newcommand{\fo}{{\mathfrak o}}
\newcommand{\fp}{{\mathfrak p}}
\newcommand{\ft}{{\mathfrak t}}
\newcommand{\fu}{{\mathfrak u}}
\newcommand{\fs}{{\mathfrak s}}
\newcommand{\ga}{\alpha}
\newcommand{\gb}{\beta}
\newcommand{\gd}{\delta}
\newcommand{\gD}{\Delta}
\newcommand{\gre}{\epsilon}
\newcommand{\gl}{\lambda}
\newcommand{\gm}{\mu}
\newcommand{\gn}{\nu}
\newcommand{\gP}{\Phi}
 \newcommand{\co}{\mathcal{O}}
 \newcommand{\cp}{\mathcal{P}}
 \newcommand{\cs}{\mathcal{S}}
 \newcommand{\cf}{\mathcal{F}}
 \newcommand{\ce}{\mathcal{E}}
 \newcommand{\ct}{\mathcal{T}}
 \newcommand{\cb}{\mathcal{B}}
\newcommand{\vj}{\mathbf{j}}
\newcommand{\vq}{\mathbf{q}}
\newcommand{\vr}{\mathbf{r}}
\newcommand{\vs}{\mathbf{s}}
\newcommand{\vt}{\mathbf{t}}
\renewcommand{\tilde}{\widetilde}
\renewcommand{\bar}[1]{\overline{#1}}
\newcommand{\exco}{{\ \tikz[baseline={([yshift=-.7ex]current bounding box.center)}]{\draw[>=latex,->] (0,0) -- (.85,0);}\ }}
\newcommand{\exct}{{\ \tikz[baseline={([yshift=-.7ex]current bounding box.center)}]{\draw[double distance = 1.5pt,>=latex,->] (0,0) -- (.85,0);}\ }}
\begin{document}
\parskip=4pt
\baselineskip=14pt

\title[Excited Young diagrams]{Excited Young diagrams, equivariant K-theory, and Schubert varieties}

\author{William Graham}
\address{
Department of Mathematics,
University of Georgia,
Boyd Graduate Studies Research Center,
Athens, GA 30602
}
\email{wag@math.uga.edu}

\author{Victor Kreiman}
\address{
Department of Mathematics,
University of Wisconsin - Parkside,
Kenosha, WI 53140
}
\email{kreiman@uwp.edu}

\date{\today}

\begin{abstract}
We give combinatorial descriptions of the restrictions to $T$-fixed points of the
classes of structure sheaves of Schubert varieties in the $T$-equivariant $K$-theory
of Grassmannians and of maximal isotropic Grassmannians of orthogonal and symplectic types.
We also give formulas, based on these descriptions, for the Hilbert series and Hilbert polynomials at $T$-fixed points of the corresponding
Schubert varieties. These
descriptions and formulas are given
in terms of two equivalent combinatorial models: excited Young diagrams and set-valued tableaux.   The restriction fomulas are positive, in that for a Schubert variety of codimension $d$, the formula equals $(-1)^d$ times a sum, with nonnegative coefficients,
of monomials in the expressions $(e^{-\ga} -1)$,
as $\alpha$ runs over the positive roots.   In types $A_n$ and $C_n$ the restriction formulas
had been proved earlier by \cite{Kre:05}, \cite{Kre:06} by a different method.
In type $A_n$, the formula for the Hilbert series had been proved earlier by \cite{LiYo:12}.
The method of this paper,
which relies on a restriction formula of Graham \cite{Gra:02} and Willems \cite{Wil:06},
is based on the method used by Ikeda and Naruse \cite{IkNa:09} to obtain the analogous formulas in equivariant cohomology.
The formulas we give differ from the $K$-theoretic restriction formulas given by Ikeda and Naruse \cite{IkNa:11}, which use different versions
of excited Young diagrams and set-valued tableaux.
We also give Hilbert series and Hilbert polynomial formulas which are valid for Schubert varieties in any cominuscule flag variety, in terms
of the $0$-Hecke algebra.
\end{abstract}

\maketitle

\tableofcontents

\noindent

\section{Introduction}
In this paper we use equivariant $K$-theory to obtain information about the local structure
of Schubert varieties in Grassmannians or maximal isotropic Grassmannians of orthogonal or symplectic
type.  Such a Grassmannian is a generalized flag variety of the form $X = G/P$, where $G$ is one
of the groups $G = \SL_n(\C)$, $G = \SO_n(\C)$ or $G = \Sp_{2n}(\C)$, and $P$ is a parabolic subgroup.
These varieties have long attracted attention because of their connections
with combinatorics and representation theory.   Let $B \supset T$ denote a Borel subgroup
and maximal torus of $G$, and let $B^-$ denote the opposite Borel subgroup to $B$.
 If $Y$ is a Schubert variety in $X$
(that is, the closure of a $B^-$-orbit in $X$), then the structure sheaf $\co_Y$ defines an
element $[\co_Y]$ in $K_T(X)$, the Grothendieck group of $T$-equivariant coherent sheaves on $X$.
If $i: \{ x \} \hookrightarrow X$ is the inclusion of a $T$-fixed point, then the class $i_x^*[\co_Y]$
is an element in the representation ring $R(T)$ of $T$.  The restriction
$i_x^*[\co_Y]$ enables one to describe the ring of functions on the tangent cone of $Y$ at $x$ as a representation of $T$. We will refer to the class $i_x^*[\co_Y]$ as the restriction or
pullback of the class $[\co_Y]$ to the fixed point $x$.  The main results of this paper are
combinatorial formulas for these pullback classes.  Such formulas have particular interest
because in all of these cases (except for the odd orthogonal case)
the generalized flag variety is cominuscule, which means that the restriction formulas yield formulas for the
Hilbert series and Hilbert functions of the local rings $\co_{Y,x}$.\footnote{The maximal isotropic Grassmannians in the $SO(2n+1)$ and $SO(2n+2)$ cases are isomorphic, so one can obtain Hilbert series
and multiplicity formulas in the odd orthogonal case as well.  See Section \ref{ss.orthog_grassmannians}.}    As a consequence of our formulas we
deduce that in these examples (and in fact for Schubert varieties in any cominuscule flag variety),
 the Hilbert function coincides with the Hilbert polynomial.  Note that the restriction fomulas in this paper are positive, in that
 for a Schubert variety of codimension $d$, the formula is $(-1)^d$ times a sum, with nonnegative coefficients,
of monomials in the expressions $(e^{-\ga} -1)$,
as $\alpha$ runs over the positive roots.  Similar positivity results occur in the structure constants for the equivariant
$K$-theory of flag varieties; see \cite{GrRa:04}, \cite{GrKu:08}.
One consequence of the positivity in our restriction formulas is that the constants defining the Hilbert series and Hilbert polynomial
are given by positive termed enumerative formulas, i.e., one obtains them by counting well defined algebraic or combinatorial objects.

The combinatorial formulas for the $i_x^*[\co_Y]$ were obtained earlier, in the cases of the
Grassmannian and the Lagrangian Grassmannian, by Kreiman \cite{Kre:05}, \cite{Kre:06}.
The formulas were derived there by using equivariant Gr\"{o}bner degenerations of Schubert varieties in the neighborhood of a $T$-fixed point; these degenerations were obtained in \cite{KoRa:03}, \cite{Kre:03}, \cite{KrLa:04}, and \cite{GhRa:06}.  This method is discussed in more detail in
Section \ref{s.Grassmannian}.
The approach taken in this paper is different, and modeled on the
approach taken by Ikeda and Naruse \cite{IkNa:09}, who obtained restriction formulas
in equivariant cohomology.  The main tool
of Ikeda and Naruse is a formula of Andersen-Jantzen-Soergel \cite{AJS:94} and Billey \cite{Bil:99}.  This gives the
restriction of a Schubert class to a $T$-fixed point in terms of expressions in what is
called the nil-Coxeter (or nil-Hecke) algebra.  The nil-Coxeter formula works for the full flag variety (and hence for any generalized
flag variety).  In the classical cominuscule cases, however, it can be used to give formulas in terms
of combinatorics related to Young diagrams.   To obtain the formulas in equivariant $K$-theory,
we replace the cohomology formula by an analogous $K$-theory formula in the $0$-Hecke algebra, obtained
by Graham and Willems.   We use this to obtain general formulas for the Hilbert series and
Hilbert polynomials for Schubert varieties of cominuscule flag varieties at $T$-fixed points.
In the Grassmanian cases, we again relate these formulas to Young diagrams.
Our formulas are in terms of excited Young diagrams (the term is due to Ikeda and Naruse \cite{IkNa:09};
these were called subsets of Young diagrams in \cite{Kre:05}, \cite{Kre:06}).  We have generalized the definition
of excited Young diagrams for the $K$-theory formulas; the earlier definitions, which were used for
the equivariant cohomology formulas, are what we call reduced excited
Young diagrams.  Reduced excited Young diagrams were discovered independently by Kreiman \cite{Kre:05}, \cite{Kre:06} and Ikeda and Naruse \cite{IkNa:09}.
In type $A_n$, excited Young diagrams are the same combinatorial objects as the pipe dreams of \cite{WoYo:12} (see also \cite{LiYo:12}).
We also give formulas in terms of an alternative, but equivalent, combinatorial model,
namely set-valued tableaux. 
The set-valued tableaux which we use were introduced in \cite{Kre:05}, \cite{Kre:06}. In type $A_n$, they also appeared in \cite{WoYo:12}, where they were identified as special types of flagged set-valued tableaux. Flagged set-valued tableaux, which were introduced in  \cite{KMY:09}, generalize both set-valued tableaux \cite{Buc:02} and flagged tableaux \cite{Wac:85}.

 Ikeda and Naruse \cite{IkNa:11} used somewhat different versions of excited Young diagrams and set-valued tableaux
to obtain combinatorial formulas for functions $G_\gl(x|b)$, $GB^{(n)}_{\lambda}(x|b)$,  $GC^{(n)}_{\lambda}(x|b)$,
and  $GD^{(n)}_{\lambda}(x|b)$.  These are functions in variables $x_1,x_2, \ldots, x_n$ and $b_1,b_2, \ldots$, which depend
on a parameter $\gb$.   The function $G_\gl(x|b)$ is equal to the factorial Grothendieck polynomial of Macnamara \cite{Mcn:06}. These functions represent the classes of structure sheaves of Schubert varieties
in equivariant $K$-theory, in types $A$, $B$, $C$, and $D$, respectively.
In particular, Ikeda and Naruse prove that if $G$ is of type $A$, $B$, $C$, or $D$, then if one takes a function of the
appropriate type, sets $\gb = -1$, and chooses an appropriate specialization
of the variables (depending on $\mu$), the result is the restriction of $[\co_{X^{\lambda}}]$ to the point
corresponding to $\mu$.   Thus, their results lead to combinatorial formulas for the pullbacks of the
structure sheaves of Schubert varieties.  These formulas are different from the formulas given
in this paper.  See Section \ref{ss.previous} for examples comparing the formulas in this paper with
the results of \cite{IkNa:11}.

Since the $0$-Hecke restriction formula is valid for any generalized flag variety, it is natural
to ask why we focus on the Grassmannians and maximal isotropic Grassmannians.   There are
are two important properties which are relevant to these cases.  First,
given any $T$-fixed
point in a cominuscule flag variety, there exists a vector $\xi$ in the Lie algebra of $T$,
such that for any weight $\ga$ of $T$ on the tangent space of the flag variety at that point,
$\ga(\xi) = -1$ (see Proposition \ref{p.cominuscule}).  This property implies that in the cominuscule cases, restrictions in
equivariant $K$-theory give information about the Hilbert series (see Proposition \ref{p.fixedhilbert}).
Second, in the cases we consider---that is, in all cominuscule flag varieties,
as well as the maximal isotropic Grassmannians in the odd orthogonal case---the Schubert varieties and $T$-fixed points are
parametrized by elements of the Weyl group which are fully commutative in the sense
of Stembridge \cite{Ste:96}.  This property is used when we connect the $0$-Hecke formula to Young diagrams.

The contents of the paper are as follows.  Section \ref{s.background} describes the relation between
restriction formulas in equivariant $K$-theory and the tangent cone at $T$-fixed points, and
explains how, in the case of cominuscule flag varieties, this is connected with multiplicities and
Hilbert series.  This connection has been known for some time; we learned about it from
Michel Brion, who pointed out that in the cominuscule case,  equivariant multiplicities can
be used to compute multiplicities.
Section \ref{ss.mult} contains
Proposition \ref{p.fixedchar}, which is a version of
a result in an unpublished paper of Bressler \cite{Bre}, who used it to give a proof of a formula
of Kumar \cite[Theorem 2.2]{Kum:96} describing multiplicities in the tangent cone of Schubert varieties
in terms of the $0$-Hecke algebra.
This is is also related to work of Rossmann
\cite{Ros:89}.   The connection to Hilbert series is given
in  Proposition \ref{p.fixedhilbert}.   This connection to the Hilbert series is known---see for example
\cite[Section 9]{IkNa:09} or \cite{LiYo:12}; we have given some details not explained in these references.
Section \ref{ss.backgroundHecke} recalls some definitions
about the equivariant $K$-theory of the flag variety, and states the $0$-Hecke pullback formula
(Theorem \ref{t.pullback}); related formulas were given by Graham \cite{Gra:02} and
Willems \cite{Wil:06}.  Section \ref{ss.cominuscule} contains Proposition \ref{p.cominuscule},
which shows that cominuscule flag varieties have the geometric property needed to apply
Proposition \ref{p.fixedhilbert}.  We originally learned this result from Brion; it is also used
in \cite[Section 9]{IkNa:09}.
Combining the above results yields
 Hilbert series and Hilbert
polynomial formulas (Theorem \ref{t.cominusculeformula}) and a formula for the multiplicity
of a $T$-fixed point (Corollary \ref{c.cominusculemult}), which are valid for arbitrary
(not necessarily classical) cominuscule flag varieties; these formulas are given in Section \ref{ss.Hilbertcominuscule}.

Section \ref{s.nilHecke} proves some results about fully commutative elements and the $0$-Hecke
algebra which we need to obtain the connection with Young diagrams.

Section \ref{s.Grassmannian} concerns the case where $G = \SL(n,\C)$,
so $X = \Grass(d,n)$, the Grassmannian variety of all $d$-dimensional subspaces of $\C^n$.
This case is the foundation of all the classical cases.  Section \ref{ss.partitions_An}
gives background about Grassmannian permutations and partitions, which index the Schubert
varieties and the $T$-fixed points.  Section \ref{ss.eyd} defines excited
Young diagrams.   The first main result is Theorem
\ref{t.ktheory_eyd}, the restriction formula in terms of excited Young diagrams.  This theorem is proved
by finding a reduced expression for a Grassmannian permutation which is related to the Young diagram
of the corresponding partition (Section \ref{ss.eyd}), and then interpreting the terms
of the $0$-Hecke restriction formula in terms of excited Young diagrams (Proposition \ref{p.subsequences}).  Section \ref{ss.setvalued_tableaux} defines set-valued tableaux, which are in some ways
easier to work with than excited Young diagrams; Theorem
\ref{t.ktheory_svt} gives the restriction formula in terms of set-valued tableaux.  This theorem is proved by establishing
a bijection between appropriate collections of set-valued tableaux and excited Young diagrams.

Section \ref{s.orthosymplectic} deals with the remaining classical cases, the maximal isotropic
Grassmannians of orthogonal or symplectic types.  In these cases, the Schubert varieties and $T$-fixed points are indexed by
shifted Young diagrams.  The restriction formulas, which are obtained by adapting the methods of the previous
section, are in terms of excited shifted Young
diagrams (Theorem \ref{t.ktheory_eyd_bcd})  and set-valued shifted tableaux (Theorem \ref{t.ktheory_syt_bcd}).

Appendix \ref{s.appendix_roots} reviews some facts about root systems and Weyl groups.
Appendix \ref{s.appendix_restriction-opp} explains the relationship between different versions of the
$0$-Hecke restriction formula.

\section{Equivariant $K$-theory of the flag variety, Hilbert series and multiplicities}\label{s.background}

\subsection{Equivariant $K$-theory, Hilbert series and multiplicities} \label{ss.mult}
In this section we review some results relating equivariant $K$-theory to the local
rings of functions at $T$-fixed points.  We have included some proofs for the convenience
of the reader.   Let $T = (\C^*)^m$ denote a complex torus.
Let $R(T)$ denote the representation ring of $T$; this is the set of all $\Z$-linear combinations
of $e^{\gl}$, where $\gl$ is a weight of $T$.

If $M$ is a scheme with a $T$-action, let $K_T(M)$ denote the Grothendieck
group of coherent sheaves on $X$.  If $M$ is smooth, then $K_T(M)$ can be identified
with the Grothendieck group of vector bundles on $M$.  A $T$-equivariant
coherent sheaf $\cf$ on $M$ defines a class $[\cf] \in K_T(M)$.  In particular,
a closed $T$-invariant subscheme $Z$ of $M$ defines a class $[\co_Z] \in K_T(M)$.
If $M$ is a point then $K_T(M)$ is identified with $R(T)$.

Let $X$ be a smooth $T$-variety and $x \in X^T$ be an
isolated fixed point.  Let $\Phi(T_x)$ denote the set of weights of $T$ on the tangent space
$T_x X$.  The fixed point $x$ is said to be attractive if there is a half-space in
$\ft^*$ containing $\Phi(T_x)$.  This implies that $x$ has a $T$-stable neighborhood in $X$
which is $T$-equivariantly isomorphic to $T_x X$, and such that $x$ corresponds to the origin in $T_x X$.
Using this fact we can prove results about the pullbacks of classes in $K_T(X)$ to $x$
by reducing to the case where $X$ is a vector space with a linear $T$-action and $x$ is the
origin (see Proposition \ref{p.fixedchar}).

Let $Y \subset X$ be a $T$-stable subscheme containing $x$, and let $ \co_{Y,x}$ denote the local
ring of $Y$ at $x$ with maximal ideal $\fm = \fm_{Y,x}$.  Let $ \Gr \co_{Y,x} = \oplus_{i=0}^{\infty} \fm^i / \fm^{i+1}$.
By definition, the tangent cone of $Y$ at $x$ is $\spec ( \Gr \co_{Y,x})$ (see \cite[Section 2]{Kum:96}).
Let $\hat{R}$ denote the set of expressions of the form $\sum_{\mu \in \hat{T}} c_{\mu} e^{\mu}$.
The group $T$ acts on $\Gr \co_{Y,x}$ with finite multiplicities, so we can define $\Char ( \Gr \co_{Y,x} ) \in \hat{R}$
as $\Char ( \Gr \co_{Y,x} )  =  \sum m_{\mu} e^{\mu}$, where $m_{\mu}$ is the multiplicity of the weight $\mu$ in $Gr \co_{Y,x} $.

Let $f$ be an element of the quotient field of $R(T)$ of the form
$$
f = \frac{r}{\prod_{\mu \in \hat{T}} (1 - e^{\mu})^{n_{\mu}}},
$$
where $r \in R(T)$, $n_{\mu} \in \Z_{\ge 0}$, and such that there is a half-space in $\ft^*$
containing all the $\mu$ with $n_{\mu} \neq 0$.  Define $F(f) \in \hat{R}$ to be the series
$$
r \prod_{\mu \in \hat{T}} ( \sum 1 + e^{\mu} + e^{2 \mu} + \cdots )^{n_{\mu}}).
$$

The following proposition is a version of a result in an unpublished paper of Bressler, and
is also related to \cite[Lemma 1.1]{Ros:89}.  Bressler \cite{Bre} used this result to give a proof of a formula
of Kumar \cite[Theorem 2.2]{Kum:96} describing the multiplicities in the ring of functions on
the tangent cone to a Schubert variety at a $T$-fixed point in terms of the $0$-Hecke algebra
(see \cite[Remark 2.13]{Kum:96}).

\begin{Prop} \label{p.fixedchar}
Let $x$ be an attractive fixed point in the smooth $T$-variety $X$, and let $Y \subset X$ be a $T$-stable
subscheme containing $X$.  Let $i: \{ x \} \hookrightarrow X$ denote the inclusion,
and $[\co_Y] \in K_T(X)$ the class of the structure sheaf of $Y$.  Then
$$
\Char (\Gr \co_{Y,x}) = F\left(\frac{i^*[\co_Y]}{\prod_{\mu \in \Phi(T_x)}(1 - e^{-\mu})}\right).
$$
\end{Prop}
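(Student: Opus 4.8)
The plan is to reduce to the case of a vector space and then compare a $T$-equivariant free resolution of $\co_Y$ with the graded character of the tangent cone. First I would use attractiveness to pass to a linear model: since $x$ is attractive it has a $T$-stable neighborhood that is $T$-equivariantly isomorphic to the tangent space $V := T_x X$, carrying $x$ to the origin $0$. Both sides of the asserted identity depend only on this neighborhood, since $\Gr \co_{Y,x}$ is computed from the local ring at $x$ and the pullback $i^*[\co_Y]$ factors through restriction to the neighborhood. So I may assume $X = V$ is a vector space with linear $T$-action, $x = 0$, and $Y \subset V$ is a $T$-stable closed subscheme through the origin with ideal $I \subset S := \C[V] = \operatorname{Sym}(V^*)$. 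Here $V^*$ has weights $\{-\mu : \mu \in \Phi(T_x)\}$, so $\Char S = \prod_{\mu \in \Phi(T_x)} (1-e^{-\mu})^{-1}$; attractiveness places all these weights in a half-space, which guarantees finite weight multiplicities and hence that all the relevant characters lie in $\hat{R}$.

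Next I would extract both $i^*[\co_Y]$ and $\Char(S/I)$ from a single resolution. Because $S$ is a polynomial ring and $T$ is linearly reductive, $S/I$ admits a finite $T$-equivariant free resolution $0 \to S \otimes W_k \to \cdots \to S \otimes W_0 \to S/I \to 0$ with each $W_i$ a finite-dimensional $T$-module. The pullback $i^*: K_T(V) \to R(T)$ is an isomorphism sending $[S \otimes W_i]$ to $\Char W_i$, so $i^*[\co_Y] = \sum_i (-1)^i \Char W_i$. Taking characters weight-by-weight in the resolution gives $\Char(S/I) = \Char S \cdot \sum_i (-1)^i \Char W_i = i^*[\co_Y]/\prod_\mu(1-e^{-\mu})$; expanding each factor $(1-e^{-\mu})^{-1}$ as a geometric series is exactly the operation $F$, so $\Char(S/I) = F\!\left(i^*[\co_Y]/\prod_\mu(1-e^{-\mu})\right)$.

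It remains to identify $\Char(S/I)$, the character of the global coordinate ring, with $\Char(\Gr \co_{Y,x})$, the character of the tangent cone; I expect this to be the main obstacle. Writing $A = S/I$ and $\fm_0$ for the ideal of the origin, one has $\Gr \co_{Y,x} \cong \Gr_{\fm_0} A = \bigoplus_n \fm_0^n A / \fm_0^{n+1} A$. I would exploit attractiveness through a one-parameter subgroup $\lambda$ with $\langle \mu, \lambda\rangle > 0$ for all $\mu \in \Phi(T_x)$; this endows $A$ with a grading in non-positive $\lambda$-degrees having finite-dimensional pieces, with $A_{(0)} = \C$ and $\fm_0 = A_{<0}$, so that $\fm_0^n \subseteq A_{\le -n}$. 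Consequently each $T$-weight space of $A$ meets only finitely many steps of the $\fm_0$-adic filtration, which forces the weight multiplicities of $A$ and of its associated graded to coincide. Hence $\Char(S/I) = \Char(\Gr_{\fm_0}A) = \Char(\Gr \co_{Y,x})$, and combining with the previous paragraph yields the proposition. The point I would treat most carefully is precisely this last step: globally $\bigcap_n \fm_0^n A$ need not vanish, and it is attractiveness, via the grading by $\lambda$, that secures both the separatedness of the filtration and the termwise finiteness needed to equate the two characters.
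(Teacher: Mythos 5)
Your proof follows essentially the same route as the paper's: reduce to the linear model $X = V = T_xX$ via attractiveness, take a finite $T$-equivariant free resolution of the coordinate ring $B$ of $Y$, and read off both $i^*[\co_Y]$ and $\Char(B)$ from that single resolution. The only difference is one of detail: the paper asserts the equality $\Char(\Gr \co_{Y,x}) = \Char(B)$ with only the localization step spelled out, whereas you additionally justify $\Char(B) = \Char(\Gr B)$ via the grading coming from a one-parameter subgroup pairing positively with $\Phi(T_x)$ --- a correct elaboration of exactly the point where attractiveness is needed.
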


\begin{proof}
There is a $T$-stable affine open neighborhood of $x$ in $X$ which is $T$-equivariantly
isomorphic to $V = T_x X$ (see \cite[Corollary 2]{Bia:73}).  We can replace $X$ by this neighborhood and therefore assume
$X = V = \spec A$, where $A = S(V^*)$.  Let $I$ denote the ideal of $Y$ in $A$ and $B = A/I$,
so $Y = \spec B$.  Let $\fn$ denote the maximal ideal of $x$ in $B$.  Then $\co_{Y,x} = B_{\fn}$,
and $\fm = \fn B_{\fn} \subset B_{\fn}$.  Define $\Gr B = \oplus_{i=0}^{\infty} \fn^i / \fn^{i+1}$.
The natural map $B / \fn^i \to B_{\fn} / \fm^i$ is an isomorphism for all $i$.  This implies
that the natural map $\fn^i / \fn^{i+1} \to \fm^i / \fm^{i+1}$ is an isomorphism for all $i$, so
we obtain a $T$-equivariant isomorphism $\Gr \co_{Y,x} \to \Gr B$.  Therefore
$\Char ( \Gr \co_{Y,x} ) = \Char (\Gr B)$, which in turn is equal to $\Char (B)$.

There exists a $T$-equivariant resolution of $B$ by finite free $A$-modules
$$
0 \to F_d \to F_{d-1} \to \cdots \to F_0 \to B \to 0
$$
where each $F_j$ is isomorphic to $\oplus_i A \otimes \C_{\lambda_{i,j}}$.  Here $A \otimes \C_{\lambda_{i,j}}$ denotes the $A$-module $A$ with $T$-action twisted by $\lambda_{i,j} \in \hat{T}$.   (See \cite[Lemma 1.1]{Ros:89}.)
This resolution corresponds to the resolution of $\co_Y$ over $\co_X$:
$$
0 \to \cf_d \to \cf_{d-1} \to \cdots \to \cf_0 \to \co_Y \to 0
$$
where $\cf_j$ is isomorphic to $\oplus_i \co_X \otimes \C_{\lambda_{i,j}}$
In $R(T)$, $i^*[\co_X] = 1$, and therefore
$$
i^*[\co_Y] = \sum_{i,j} (-1)^j e^{\lambda_{i,j}} i^*[\co_X] = \sum_{i,j} (-1)^j e^{\lambda_{i,j}}.
$$
On the other hand,
\begin{eqnarray*}
\Char (B) & = & \sum_j (-1)^j \Char (F_j) = \sum_{i,j} (-1)^j e^{\lambda_{i,j}} \Char (A) \\
                  & = & \sum_{i,j} (-1)^j e^{\lambda_{i,j}} F\left(\frac{1}{\prod_{\mu \in \Phi(T_x)}(1 - e^{-\mu})}\right) \\
                  & = & F\left(\frac{i^*[\co_Y]}{\prod_{\mu \in \Phi(T_x)}(1 - e^{-\mu})}\right),
\end{eqnarray*}
as desired.
\end{proof}

The Hilbert function of $\Gr \co_{Y,x}$ is by definition the function
$n \mapsto \dim (\fm^n / \fm^{n+1})$.  For $n >> 0$, this function is a polynomial,
which we denote by $h(Y,x)(n)$.   Let $r$ denote the degree of $h(Y,x)(n)$. The multiplicity of $Y$ at $x$, which we denote by $\mult(Y,x)$, is $r!$ times the leading coefficient of $h(Y,x)(n)$. The Hilbert series $H(Y,x)(t)$
is the generating function associated to the Hilbert function of $\Gr \co_{Y,x}$.
By definition,
$$
H(Y,x)(t)= \sum_{i = 0}^{\infty} \dim (\fm^i / \fm^{i+1}) t^i.
$$

Let $S$ denote a formal sum $S = \sum_{\mu \in \hat{T}} c_{\mu} e^{\mu}$.
Suppose that there exists an element $\xi \in \ft$ such that $\mu(\xi)$ is a nonnegative integer
for each $\mu$ with $c_{\mu} \neq 0$, and such that for each $n \in \Z_{\ge 0}$,
there exist only finitely many $\mu$ with $c_{\mu} \neq 0$ and $\mu(\xi) = n$.  Then
define $ev_{\xi} S$ to be the power series $ev_{\xi} S = \sum_{\mu \in \hat{T}} t^{\mu(\xi)}$.

\begin{Prop} \label{p.fixedhilbert}
Keep the hypotheses of Proposition \ref{p.fixedchar}, and assume in addition that there exists
$\xi \in \ft$ such that $\ga(\xi) = -1$ for each $\ga \in \Phi(T_x)$.  Then
$$
H(Y,x)(t) = \ev_{\xi} \Char(\Gr \co_{Y,x}) = \frac{\ev_{\xi} i^* [\co_Y]}{(1-t)^d},
$$
where $d = \dim X$.
\end{Prop}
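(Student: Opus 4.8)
The plan is to prove the two claimed equalities in sequence, using Proposition \ref{p.fixedchar} as the engine that converts the $K$-theoretic restriction into a character series, and then reading off the Hilbert series by tracking the grading induced by $\xi$. First I would establish the middle equality $H(Y,x)(t) = \ev_\xi \Char(\Gr \co_{Y,x})$. The key observation is that the hypothesis $\ga(\xi) = -1$ for all $\ga \in \Phi(T_x)$ forces the $\xi$-grading to match, up to sign, the usual grading by powers of the maximal ideal. Concretely, the coordinate ring $A = S(V^*)$ has its degree-$n$ piece spanned by monomials of weight $\mu$ with $\mu(\xi) = n$ (since each coordinate, being dual to a tangent vector of weight $\ga$, has weight $-\ga$, hence $\xi$-value $+1$). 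The same statement descends to $\Gr \co_{Y,x} \cong \Gr B$, so the multiplicity $m_\mu$ of the weight $\mu$ in $\Gr \co_{Y,x}$ contributes to the component in $\xi$-degree $\mu(\xi)$, and summing $\dim(\fm^n/\fm^{n+1})$ over the fixed value $\mu(\xi) = n$ is exactly what $\ev_\xi$ computes.

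Next I would verify that $\ev_\xi$ is compatible with the operation $F(\cdot)$ from Proposition \ref{p.fixedchar}, so that applying $\ev_\xi$ to $\Char(\Gr \co_{Y,x}) = F\!\left(\frac{i^*[\co_Y]}{\prod_{\mu \in \Phi(T_x)}(1 - e^{-\mu})}\right)$ produces the right-hand side. The heuristic is that $\ev_\xi$ is the ring homomorphism $e^\mu \mapsto t^{\mu(\xi)}$, so it sends each factor $(1 - e^{-\ga})^{-1}$ in the denominator to $(1 - t^{-\ga(\xi)})^{-1} = (1 - t)^{-1}$, using $-\ga(\xi) = 1$. Since there are $d = \dim V = \dim X$ such factors, one obtains the $(1-t)^{-d}$ in the denominator, while $\ev_\xi$ applied to $i^*[\co_Y] \in R(T)$ gives the numerator $\ev_\xi i^*[\co_Y]$. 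This gives the second equality. The factorization of $\ev_\xi$ through $F$ is what makes the two steps fit together: $\ev_\xi \circ F$ is again the evaluation homomorphism, because expanding $(1 - e^{-\ga})^{-1}$ as $\sum_{k \ge 0} e^{-k\ga}$ term by term and then applying $e^\mu \mapsto t^{\mu(\xi)}$ agrees with first substituting $t$.

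The main obstacle will be making the convergence and well-definedness bookkeeping rigorous rather than merely formal. The series $\Char(\Gr \co_{Y,x})$ lives in $\hat R$, an infinite formal sum, and applying $\ev_\xi$ requires the finiteness hypothesis stated just before the proposition: for each $n$, only finitely many weights $\mu$ with nonzero coefficient satisfy $\mu(\xi) = n$. I would verify this finiteness holds here because $\xi$ realizes the grading with finite-dimensional graded pieces $\fm^n/\fm^{n+1}$. One must also check that the half-space condition guaranteeing that $F$ is defined is compatible with $\xi$ (the weights $-\ga$ for $\ga \in \Phi(T_x)$ all have positive $\xi$-value, so they lie in the half-space $\{\mu : \mu(\xi) > 0\}$, and the attractivity hypothesis aligns with this). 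Once these finiteness and positivity conditions are in place, the two equalities follow by the term-by-term matching described above, and the proof is complete.
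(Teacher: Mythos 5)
Your proof is correct and takes essentially the same route as the paper: the first equality follows from the observation that every weight $\mu$ occurring in $\fm^n/\fm^{n+1}$ satisfies $\mu(\xi)=n$, so $\ev_\xi$ converts the character into the Hilbert series, and the second equality follows by applying $\ev_\xi$ to the formula of Proposition \ref{p.fixedchar}, with each factor $(1-e^{-\ga})^{-1}$ becoming $(1-t)^{-1}$. The paper's written proof records only the first equality (via monomials in a basis of $\fm/\fm^2$) and leaves the application of Proposition \ref{p.fixedchar} implicit, so your explicit treatment of $\ev_\xi\circ F$ and the finiteness bookkeeping is a more detailed rendering of the same argument rather than a different one.
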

\begin{proof}
Let $z_1,\ldots,z_q$ be a basis for $\fm/\fm^2\subseteq (T_xX)^*$, with weights $-\ga_1,\ldots,-\ga_q$, $\ga_k\in \Phi(T_x)$. For $\vj=(j_1,\ldots,j_i)$ a sequence of integers between 1 and $q$, define $z_\vj=z_{j_1}\cdots z_{j_i}\in \fm^i/\fm^{i+1}$, and let $\gm_\vj=-\ga_{j_1}-\cdots-\ga_{j_i}$, the weight of $z_\vj$. Let $\cb_i$ be a collection of $z_\vj$'s which forms a basis for $\fm^i/\fm^{i+1}$. Then $\Char(\Gr\co_{Y,x})=\sum_{i=0}^{\infty}\sum_{z_\vj\in \cb_i}e^{\gm_\vj}$, and
\begin{equation*}
    \ev_{\xi} \Char(\Gr \co_{Y,x})=\sum_{i=0}^{\infty}\sum_{z_\vj\in \cb_i} t^{\gm_\vj(\xi)}=\sum_{i=0}^{\infty}\sum_{z_\vj\in \cb_i}t^i=H(Y,x)(t).
\end{equation*}
\end{proof}

\subsection{Equivariant K-theory of flag varieties} \label{ss.backgroundHecke}
In this section we recall some background about equivariant $K$-theory and flag
varieties.  Let $G$ be a complex semisimple algebraic group, $B$ a Borel subgroup of $G$, and
$T$ a maximal torus in $B$.  Let $P \supset B$ be a parabolic subgroup of $G$.
Let $\fg, \fb, \ft$ and $\fp$ denote the Lie algebras of these groups.  Given a representation $V$ of
$T$, let $\Phi(V) \subset \ft^*$ denote the set of weights of $V$.
Let $\Phi = \Phi(\fg)$
denote the set of roots of $\fg$ with respect to $\ft$, and let $\Phi^+$ denote the set of positive
roots, chosen so that the positive root spaces are in $\fb$, i.e.~, so that $\Phi^+ = \Phi(\fb)$.  The set of negative roots is $\Phi^- = -\Phi^+$.
Let $L$ be a Levi subgroup of $P$ containing $T$, and let $\Phi_{\fl} = \Phi(\fl)$, and $\Phi^+_{\fl} = \Phi_{\fl} \cap \Phi^+$,
$\Phi^-_{\fl} = \Phi_{\fl} \cap \Phi^-$.
Let $B^-$ denote the opposite Borel subgroup
to $B$ and $\fb^-$ its Lie algebra.  Let $W = N_G(T)/T$ denote the Weyl group; we
will often use the same letter to denote an element of $W$ and a representative in $N_G(T)$.
Let $S$ denote the set of simple reflections in $W$, so $(W,S)$ is a Coxeter system.
Let $W_P = W_L$ denote the Weyl group of $L$.  Then $W_P$ is
a subgroup of $W$.  Each coset $wW_P$ in $W$ contains
a unique minimal length element and we let $W^P$ denote the set of minimal length
coset representatives in $W$.  The element $w$ is in $W^P$ if and only if $w(\Phi^+_{\fl}) \subset \Phi^+$
(cf.~\cite[2.5.3]{BiLa:00}).

Let $Y = G/B$, $X = G/P$, and let $\pi: Y \to X$ denote the projection.  We will need a formula
for the pullback of the class in $K_T(X)$ of the structure sheaf of a Schubert variety to a fixed point.
We explain how to obtain this formula from the known formula for the corresponding problem on $Y$.
If $w \in W$, we define the Schubert varieties $X^w = \overline{B^- \cdot wP} \subset X$ and
$Y^w =  \overline{B^- \cdot wB} \subset Y$.
The variety $X^w$ only depends on the coset $wW_P$, and if we take $w \in W^P$, then
$\codim X^w = l(w)$.   Since $\pi$ is a flat map, it induces a map $\pi^*: K_T(X) \to K_T(Y)$
satisfying $\pi^*[\co_{X^w}] = [\co_{\pi^{-1}(X^w)}] = [\co_{Y^w}]$.
If $v \in W^P$, let $i_v: \{ pt \} \to X$ (resp.~$j_v: \{ pt \} \to Y$) denote the map taking the point to
$vP$ (resp.~$vB$).  Because $X$ and $Y$ are smooth, there are induced pullback maps
$i_v^*: K_T(X) \to R(T)$ and $j_v^*: K_T(Y) \to R(T)$.  As $i_v = \pi \circ j_v$, we have
$i_v^* = j_v^* \circ \pi^*$, and therefore
$$
i_v^*[\co_{X^w}] = j_v^* \pi^* [\co_{X^w}] = j_v^* [\co_{Y^w}].
$$

There is a general formula for $i_v^*[\co_{X^w}] $.  To state it we need to define the $0$-Hecke algebra
associated to the Coxeter system $(W,S)$, over the ring $R$.  This algebra is a free
$R$-algebra with basis $H_w$, for $w \in W$, and the multiplication is characterized by the following
relations: $H_1$ is the identity element (here $1$ denotes the identity element of $W$); for $s \in S$, $w \in W$,
we have $H_s H_w = H_{sw}$ if $l(sw)> l(w)$,  $H_s H_w = H_{w}$ if $l(sw)< l(w)$, and $H_s^2 = H_s$.

\begin{Rem}\label{r.nilHecke}
The term $0$-Hecke algebra has been used (see, for example, \cite{Car:86}, \cite{Fay:05})
for the algebra with basis $J_w$ (for $w \in W$),
characterized
by the properties $J_s J_w = J_{sw}$ if $l(sw)> l(w)$, $J_s J_w = J_{w}$ if $l(sw)< l(w)$, and $J_s^2 = -J_s$
(and the formula in \cite{Gra:02} is given in terms of this algebra).  If we set $H_s = - J_s$, we see
that this algebra is the same as the algebra defined above, and we can translate between the two presentations,
since
 $$
 J_{s_1} J_{s_2} \ldots J_{s_k} = (-1)^{l(u) - k} J_u
 \Leftrightarrow H_{s_1} H_{s_2} \ldots H_{s_k} = H_u.
 $$
In the case of equivariant cohomology, the appropriate algebra is one in which the relations $J_s^2 = - J_s$ are replaced by relations of the form
$T_s^2 = 0$.  This algebra has been called the nil-Coxeter algebra or the nil-Hecke ring or algebra (see
for example
\cite{FoSt:94}, \cite{KoKu:86},  \cite{Gin:97}).
\end{Rem}

\begin{Def} \label{d.tvj}
Let $\vs = (s_1, s_2, \ldots, s_l)$ be a sequence of simple reflections.
Define
$T(w, \vs)$ to be the set of subsequences $\vt = (s_{i_1}, \ldots, s_{i_m})$, $1\leq i_1<\cdots<i_m\leq l$ such that
$H_{s_{i_1}} H_{s_{i_2}} \cdots H_{s_{i_m}} = H_w$.  Define $l(\vt) = m$, and define $e(\vt) = l(\vt) - l(w)$.
\end{Def}

\begin{Rem}
In the above definition of $T(w,\vs)$, if $(i_1,\ldots,i_m)$ and $(j_1,\ldots,j_m)$ are different subsequences of $(1,\ldots,l)$, then we regard $(s_{i_1}, \ldots, s_{i_m})$ and $(s_{j_1}, \ldots, s_{j_m})$ as different subsequences of $\vs$, even if they have the same entries.
\end{Rem}

We can now state the restriction formula.

\begin{Thm} \label{t.pullback}
Let $v,w \in W^P$.
Fix a reduced expression $\vs = (s_{1}, \ldots s_{l})$ for $v$, and for $c = 1, \ldots, l$, let
$r(c) = s_{1} s_{2} \cdots s_{c-1}(\ga_c)$.  Then
\begin{eqnarray} \label{e.pullback}
 i_v^*[\co_{X^w}]  & = & (-1)^{l(w)}  \sum_{\vt \in T_{w,\vs} }(e^{-r(i_1)}-1) (e^{-r(i_2)} - 1) \cdots (e^{-r(i_m)} - 1) \\
  & = & \sum_{\vt \in T_{w,\vs}} (-1)^{e(\vt)} (1 - e^{-r(i_1)}) (1 - e^{-r(i_2)}) \cdots (1 - e^{-r(i_m)})
 \end{eqnarray}
 where $T_{w,\vs}$ and $e(\vt)$ are as in Definition \ref{d.tvj}.
 \end{Thm}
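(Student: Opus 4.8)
The plan is to reduce the statement to the case of the full flag variety $Y = G/B$ and then to recognize the resulting identity as a repackaging of the known restriction formula of Graham \cite{Gra:02} and Willems \cite{Wil:06}. The reduction is immediate from the material assembled just above the theorem: for $v, w \in W^P$ one has $i_v^*[\co_{X^w}] = j_v^* \pi^*[\co_{X^w}] = j_v^*[\co_{Y^w}]$, so it suffices to prove the identity with $i_v^*[\co_{X^w}]$ replaced by $j_v^*[\co_{Y^w}]$, where now $v$ and $w$ are regarded as arbitrary elements of $W$. Note that the roots $r(c)$ and the subsequence set $T(w,\vs)$ of Definition \ref{d.tvj} are computed entirely inside $W$ from the reduced word $\vs$, so nothing about them changes under this reduction.

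Next I would record the Graham--Willems formula in the presentation best suited to it, namely the $J$-presentation of Remark \ref{r.nilHecke}. In that presentation the formula for the full flag variety asserts that $j_v^*[\co_{Y^w}]$ equals the coefficient of $J_w$ in the expansion, in the $0$-Hecke algebra over $R(T)$, of the product
$$ \prod_{c=1}^{l}\Big(1 + (1 - e^{-r(c)})\, J_{s_c}\Big). $$
Matching this cited statement to the conventions of the present paper is where care is needed: Graham and Willems work with particular choices of Borel subgroup and of Schubert variety, and one must check that, after the translations catalogued in Appendix \ref{s.appendix_restriction-opp} (relating the $B$- and $B^-$-orbit conventions together with the attendant sign and root choices), the product that appears is exactly the one above, with $r(c) = s_{1} \cdots s_{c-1}(\ga_c)$ the inversion roots of $\vs$.

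With the $J$-formula in hand, the remaining work is purely algebraic and combinatorial. Using the dictionary $J_s = -H_s$ and, for a general $u \in W$, $J_u = (-1)^{l(u)} H_u$ (both consequences of Remark \ref{r.nilHecke}), I would rewrite
$$ \prod_{c=1}^{l}\Big(1 + (1 - e^{-r(c)})\, J_{s_c}\Big) = \prod_{c=1}^{l}\Big(1 + (e^{-r(c)} - 1)\, H_{s_c}\Big) =: P, $$
and then pass from the coefficient of $J_w$ to that of $H_w$: writing $P = \sum_u c_u H_u = \sum_u c_u (-1)^{l(u)} J_u$ shows that the coefficient of $J_w$ in $P$ equals $(-1)^{l(w)}$ times the coefficient of $H_w$. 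Expanding $P$ and grouping terms according to the subsequence of $\vs$ that is selected, each subsequence $\vt = (s_{i_1}, \ldots, s_{i_m})$ contributes $(e^{-r(i_1)}-1)\cdots(e^{-r(i_m)}-1)\, H_{s_{i_1}}\cdots H_{s_{i_m}}$, and by Definition \ref{d.tvj} it contributes to the coefficient of $H_w$ precisely when $\vt \in T(w,\vs)$. This yields the first displayed equality of the theorem. The second then follows by factoring $e^{-r(i_j)} - 1 = -(1 - e^{-r(i_j)})$ out of each of the $m = l(\vt)$ factors and using the parity identity $(-1)^{l(w)}(-1)^{m} = (-1)^{m - l(w)} = (-1)^{e(\vt)}$.

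The main obstacle I anticipate is not in this algebra but in the second step: pinning down the precise form of the Graham--Willems formula and verifying that the conventions there (opposite Borel subgroups, the distinction between $Y^w$ and its $B$-orbit analogue, and the associated signs) agree with those used here, so that the product is indeed $\prod_c\big(1 + (1-e^{-r(c)})\, J_{s_c}\big)$. This is exactly the bookkeeping that Appendix \ref{s.appendix_restriction-opp} is designed to handle, and I would rely on it to make the identification rigorous; a small rank-one check (for instance $v = w = s$ in type $A_1$, where both sides equal $1 - e^{-\ga}$) gives a useful sanity test that the coefficient and sign conventions have been transcribed correctly.
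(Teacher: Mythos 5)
Your proposal is correct and follows essentially the same route as the paper: the paper also reduces to $Y = G/B$ via $i_v^* = j_v^* \circ \pi^*$ (in the paragraph preceding the theorem) and then deduces the formula from the restriction formulas of Graham \cite{Gra:02} and Willems \cite{Wil:06}, using the $J$-versus-$H$ dictionary of Remark \ref{r.nilHecke} and the convention translation of Appendix \ref{s.appendix_restriction-opp}. The only difference is that you spell out the expansion of $\prod_c\bigl(1 + (e^{-r(c)}-1)H_{s_c}\bigr)$ and the sign bookkeeping explicitly, which the paper leaves to the reader.
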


 This formula can be deduced from formulas for restrictions to fixed points
 given by Graham and Willems (see \cite{Gra:02}, \cite{Wil:06}).  (The paper \cite{Wil:06}
 gives formulas for restrictions of a different basis than the basis of structure sheaves
 of Schubert varieties, but
 the relationship between bases given in \cite[Proposition~2.2]{GrKu:08} allows one to deduce formulas
 for the restrictions of one basis from the formulas for another basis.)   See also
 \cite{Knu:08}.

 \begin{Rem} \label{r.tangent}
Observe that $\{ r(1), r(2), \ldots, r(l) \} \subset \Phi(T_v(G/P))$ (in
 the notation of Theorem \ref{t.pullback}).
Indeed,
 the set $\{ r(1), r(2), \ldots, r(l) \}$ of Theorem \ref{t.pullback} is equal to
 $v(\Phi^-) \cap \Phi^+$, and the set $\Phi(T_v(G/P))$
 of weights of $T_v(G/P)$ is $v(\Phi(\fg/\fp)) = v(\Phi^- \setminus \Phi^-_{\fl})$.
 Because $v \in W^P$, $v(\Phi^-_{\fl}) \subset \Phi^-$.  Therefore
 $v(\Phi^-) \cap \Phi^+ \subset v(\Phi^- \setminus \Phi^-_{\fl})$, which implies
 the assertion.
\end{Rem}

\subsection{Cominuscule flag varieties} \label{ss.cominuscule}
Let $P \supset B$ be a standard parabolic subgroup of $G$;
let $\fl$ be a Levi subalgebra of $\fp$ containing $\ft$, and let $\fu$ denote the nilradical
of $\fp$, so we have a Levi decomposition
$\fp = \fl + \fu$.   Let $\fu^-$ denote the nilradical of the opposite parabolic subalgebra
to $\fp$.  If $P$ is maximal, then $P$ corresponds to some simple root $\alpha$, in the sense
that the simple roots of $\fl$ are the simple roots of $\fg$ other than $\alpha$.

\begin{Lem} \label{l.rootl}
Let $\ga_1, \ldots, \ga_r$ denote the simple roots for $\fg$ and let $P$ denote the
maximal parabolic subgroup corresponding to $\ga_i$.  If $\ga = \sum_k n_k \ga_k$ is
a root of $\fg$, and $n_i = 0$, then $\ga$ is a root of $\fl$.
\end{Lem}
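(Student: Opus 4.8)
The plan is to induct on the height $\operatorname{ht}(\ga)=\sum_k n_k$, using as the only structural input the fact that $\fl$ is a Lie \emph{subalgebra} of $\fg$. Since $\Phi_{\fl}$ is closed under negation, I would first reduce to the case $\ga\in\Phi^+$: a nonzero root is either all-positive or all-negative in its simple-root coordinates, and if $\ga\in\Phi^-$ then $-\ga\in\Phi^+$ again has vanishing $\ga_i$-coefficient, while $\ga\in\Phi_{\fl}$ if and only if $-\ga\in\Phi_{\fl}$.

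For the base case, a positive root of height $1$ is a simple root $\ga_j$, and $n_i=0$ forces $j\neq i$, so $\ga_j$ is one of the simple roots of $\fl$ and hence lies in $\Phi_{\fl}$. For the inductive step with $\operatorname{ht}(\ga)\geq 2$, I would invoke the standard fact that a non-simple positive root can be lowered by some simple root to another positive root, producing $\ga_k$ with $\gb:=\ga-\ga_k\in\Phi^+$. A quick check shows $k\neq i$, since the $\ga_i$-coefficient of $\gb$ is $n_i-\delta_{ik}=-\delta_{ik}$, which must be nonnegative because $\gb$ is positive. Thus $\ga_k$ is a simple root of $\fl$, while $\gb\in\Phi^+$ has vanishing $\ga_i$-coefficient and strictly smaller height, so by induction $\gb\in\Phi_{\fl}$.

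The crux is then to upgrade $\gb,\ga_k\in\Phi_{\fl}$ together with $\ga=\gb+\ga_k\in\Phi$ to the conclusion $\ga\in\Phi_{\fl}$, and this is the step I expect to be the main obstacle. The tempting route---arguing entirely inside the abstract root system via the $\ga_k$-string through $\gb$---has a genuine gap, because a proper root subsystem can have strictly shorter root strings than $\Phi$, so merely knowing that $\gb+\ga_k$ is a root of $\fg$ does not by itself place it in $\Phi_{\fl}$. I would instead pass from root combinatorics to the Lie algebra: since $\gb,\ga_k\in\Phi_{\fl}$, the root spaces $\fg_{\gb}$ and $\fg_{\ga_k}$ both lie in $\fl$, and because $\fl$ is a subalgebra, $[\fg_{\gb},\fg_{\ga_k}]\subseteq\fl$. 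On the other hand $\gb+\ga_k=\ga$ is a root, so the standard structure theory of semisimple Lie algebras gives $[\fg_{\gb},\fg_{\ga_k}]=\fg_{\ga}\neq 0$. Therefore $\fg_{\ga}\subseteq\fl$, which is exactly the assertion $\ga\in\Phi_{\fl}$.

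Everything outside this crux is routine---the reduction to positive roots, the height induction, and the elementary lowering lemma for positive roots are all standard---so the essential content is the observation that closure of $\fl$ under the bracket, combined with the nonvanishing of $[\fg_{\gb},\fg_{\ga_k}]$ whenever $\gb+\ga_k$ is a root, cleanly bypasses the root-string gap. I note also that a grading/centralizer argument (taking $\xi\in\ft$ with $\ga_j(\xi)=\delta_{ij}$, so that $\fl$ is generated by $\ft$ and the root spaces of the simple roots $\ga_j$, $j\neq i$) yields at once the reverse inclusion $\Phi_{\fl}\subseteq\{\ga:n_i=0\}$, but the inclusion actually asserted here is precisely the nontrivial one that the bracket argument supplies.
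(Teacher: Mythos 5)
Your proof is correct, but your key step is genuinely different from the paper's. Both arguments reduce to positive roots and induct on the height $\sum_k n_k$; the divergence is in how the induction is closed. You lower $\ga$ by a simple root ($\ga=\gb+\ga_k$ with $\gb\in\Phi^+$, forcing $k\neq i$), and then---having correctly flagged that the purely combinatorial route via the $\ga_k$-string through $\gb$ is gappy, since a sub-root-system can have shorter strings---you pass to the Lie algebra: $\fg_\gb,\fg_{\ga_k}\subseteq\fl$, closure of $\fl$ under the bracket, and the structural identity $[\fg_\gb,\fg_{\ga_k}]=\fg_{\gb+\ga_k}$ when $\gb+\ga_k\in\Phi$ give $\fg_\ga\subseteq\fl$. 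The paper instead stays entirely inside root-system combinatorics: from $(\ga,\ga)>0$ and $n_i=0$ (so $(\ga,\ga_i)\leq 0$) it extracts some $j\neq i$ with $(\ga,\ga_j)>0$, reflects to get $\gb=s_{\ga_j}(\ga)\in\Phi^+$ of strictly smaller height and still with vanishing $\ga_i$-coefficient, applies induction, and concludes via the fact that $s_{\ga_j}$ (a reflection in a simple root of $\fl$) preserves $\Phi_{\fl}$. So the paper's mechanism for re-entering $\Phi_{\fl}$ is Weyl-group invariance of the subsystem, while yours is bracket closure of the subalgebra; both cleanly avoid the root-string trap. The paper's version has the small advantage of never invoking the Lie algebra structure of $\fl$ beyond its root system, whereas yours is arguably more robust in its hypotheses---it needs only that $\fl$ is a $\ft$-stable subalgebra containing the simple root spaces $\fg_{\ga_j}$, $j\neq i$, with no a priori knowledge that $\Phi_{\fl}$ is reflection-stable---and it generalizes verbatim to any subalgebra generated by $\ft$ and a subset of the simple root vectors.
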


\begin{proof}
By replacing $\ga$ by $-\ga$ if necessary we may assume each $n_k \ge 0$ (i.e.
$\ga$ is a positive root).
The proof is by induction on $\sum n_k$.  If $\sum n_k = 1$ then $\ga$ is simple
and, as noted above, $\ga$ is a root of $\fl$.  Suppose now that the statement of
the lemma holds for all roots $\beta = \sum m_k \ga_k$ with $\sum m_k < \sum n_k$.
Since $(\ga, \ga) = \sum n_k (\ga, \ga_k) > 0$, there exists some $j \neq i$ with
$(\ga, \ga_j)>0$.  Then $\beta = s_{\ga_j}(\ga) = \sum m_k \ga_k$ is a root, necessarily positive
(since the only root which changes sign under $s_{\ga_j}$ is $\ga_j \neq \beta$).
Moreover, $\sum m_k =  \sum n_k - 2\frac{(\ga,\ga_j)}{(\ga_j,\ga_j)} < \sum n_k$.
Our inductive hypothesis implies that $\beta$ is a root of $\fl$; since the set of roots of
$\fl$ is preserved by $s_{\ga_j}$, we conclude that $s_{\ga_j} \beta = \alpha$ is a root
of $\fl$.
\end{proof}

If $G$ is simple, a maximal parabolic subgroup $P$ is called cominuscule if the corresponding
simple root $\alpha$ appears with coefficient equal to $1$
when the highest root of $G$ is written as a sum
of simple roots.  The corresponding generalized flag variety $G/P$ is
also called cominuscule.  Cominuscule flag varieties have the following
useful property
(which we learned from Michel Brion).

\begin{Prop} \label{p.cominuscule}
Let $G/P$ be a cominuscule generalized flag variety.  For any $v \in W^P$, there exists an element
$\xi \in \ft$ (depending on $v$) such that for any weight $\alpha$ of $T$ on
$T_{vP}(G/P)$, we have $\alpha(\xi) = -1$.
\end{Prop}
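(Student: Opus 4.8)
The plan is to reduce the statement to a calculation of root coefficients, using Remark \ref{r.tangent} to identify the relevant weights, Lemma \ref{l.rootl} and the cominuscule hypothesis to pin those weights down, and then $W$-equivariance to pass from the base point to the point $vP$. First I would invoke Remark \ref{r.tangent}, which says that the set of weights of $T$ on $T_{vP}(G/P)$ is exactly $v(\Phi^- \setminus \Phi^-_{\fl})$. Thus it suffices to produce $\xi \in \ft$ with $(v\gb)(\xi) = -1$ for every $\gb \in \Phi^- \setminus \Phi^-_{\fl}$.

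Next I would analyze the weights at the base point. Let $\ga_i$ be the simple root to which the maximal parabolic $P$ corresponds, so that the simple roots of $\fl$ are all simple roots other than $\ga_i$. Consequently $\Phi_{\fl}$ consists precisely of the roots whose $\ga_i$-coefficient is zero: the implication ``$\ga_i$-coefficient zero $\Rightarrow$ root of $\fl$'' is exactly Lemma \ref{l.rootl}, and the reverse is immediate from the description of the simple roots of $\fl$. Hence a negative root $\gb = -\sum_k m_k \ga_k$ (with $m_k \ge 0$) lies in $\Phi^- \setminus \Phi^-_{\fl}$ if and only if $m_i \ge 1$. Here the cominuscule hypothesis enters: since $\ga_i$ occurs with coefficient $1$ in the highest root, the $\ga_i$-coefficient of any root lies in $\{-1,0,1\}$, which forces $m_i = 1$. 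Therefore \emph{every} $\gb \in \Phi^- \setminus \Phi^-_{\fl}$ has $\ga_i$-coefficient exactly $-1$.

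Then I would introduce the coweight $\xi_0 \in \ft$ dual to the simple roots, i.e.\ the unique element with $\ga_j(\xi_0) = \delta_{ij}$ for all simple roots $\ga_j$ (this exists because, $G$ being semisimple, the simple roots form a basis of $\ft^*$). For any root $\gb = \sum_k n_k \ga_k$ one has $\gb(\xi_0) = n_i$, the $\ga_i$-coefficient of $\gb$, so the preceding paragraph gives $\gb(\xi_0) = -1$ for all $\gb \in \Phi^- \setminus \Phi^-_{\fl}$. Finally I would set $\xi = v\,\xi_0 \in \ft$ and use the $W$-invariance of the pairing between $\ft^*$ and $\ft$: for every $\gb \in \Phi^- \setminus \Phi^-_{\fl}$,
\[
(v\gb)(\xi) = \IP{v\gb}{v\xi_0} = \IP{\gb}{\xi_0} = \gb(\xi_0) = -1,
\]
which is exactly the assertion of the proposition.

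The main obstacle, and really the only substantive point, is the middle step: recognizing that the cominuscule condition forces all the tangent weights at the base point to share the single $\ga_i$-coefficient $-1$, so that one linear functional can evaluate to $-1$ on all of them at once. Once this uniformity is established, the passage to a general $v \in W^P$ is the formal observation that $v$ transports the base-point tangent space to $T_{vP}(G/P)$ and that the pairing is $W$-equivariant.
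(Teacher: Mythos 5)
Your proposal is correct and follows essentially the same route as the paper's proof: identify the tangent weights at the base point as $-\ga$ for positive roots $\ga$ not in $\fl$, use Lemma \ref{l.rootl} together with the cominuscule hypothesis to see that each such $\ga$ has $\ga_i$-coefficient exactly $1$, take the dual basis coweight ($\xi_i$ in the paper, your $\xi_0$), and transport it by $v$ using $W$-equivariance. The only cosmetic difference is that you invoke Remark \ref{r.tangent} to describe the weights at $vP$ directly, whereas the paper computes at $eP$ via $\fg/\fp$ and then notes the weights at $vP$ are the $v$-translates.
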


\begin{proof}
Let $\ga_1, \ldots, \ga_r$ denote the simple roots of $\fg$; these form a basis
of $\ft^*$, and we denote the dual basis of $\ft$ by $\xi_1, \ldots, \xi_r$.
Assume that $P$ corresponds to the simple root $\ga_i$.
First suppose that $v= e$ is the identity.  In this case, we can take
$\xi = \xi_i$.  The reason is that
as $T$-representations,
$$
T_{eP} (G/P) \cong \fg/\fp \cong \oplus \C_{-\alpha},
$$
where the sum is over the positive roots of $\fg$ which are not roots of $\fl$.
If $\ga = \sum n_k \ga_k$ is such a root, then $n_i > 0$ by Lemma \ref{l.rootl};
$n_i  \le 1$ since $\ga_i$ occurs with coefficient $1$ in the highest root of $\fg$,
so $n_i = 1$, and then $-\ga(\xi_i) = - n_i = -1$ as asserted.  For general
$v \in W^P$, we can take $\xi = v \xi_i$, since the set of weights of $T_{vP} (G/P)$
is $v$ applied to the set of weights of $T_{eP} (G/P)$.
\end{proof}

\subsection{Hilbert series and Hilbert polynomials in cominuscule flag varieties} \label{ss.Hilbertcominuscule}
We can now describe the Hilbert series and Hilbert polynomial of a Schubert variety
in a cominuscule generalized flag variety at a $T$-fixed point.  We will write
$H(X^w, v)(t)$ for the Hilbert series $H(X^w, vP)(t)$
and $h(X^w,v)(n)$ for the Hilbert polynomial $h(X^w,vP)(n)$.

\begin{Thm} \label{t.cominusculeformula}
Let $G/P$ be a cominuscule generalized flag variety and $v, w \in W^P$.
Fix a reduced expression $\vs = (s_{1}, \ldots s_{l})$ for $v$.  Let $d = \dim G/P$.
The Hilbert series $H(X_w,v)$ is given by
$$
H(X^w,v)(t) = \sum_{\vt \in T_{w, \vs}} \frac{(-1)^{e(\vt)}}{(1-t)^{d - l(\vt)}}.
$$
The Hilbert function is equal to the Hilbert polynomial $h(X^w,v)(n)$ for
all $n$, and is given by the formula
$$
h(X^w,v)(n) = \sum_{\vt \in T_{w, \vs} }(-1)^{e(\vt)}
\begin{pmatrix}
n + d - l(\vt) - 1 \\
d - l(\vt) -1.
\end{pmatrix}
$$
\end{Thm}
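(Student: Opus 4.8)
The plan is to combine the three tools already established---Proposition \ref{p.cominuscule}, Proposition \ref{p.fixedhilbert}, and the $0$-Hecke restriction formula of Theorem \ref{t.pullback}---so that the whole computation collapses to evaluating a single substitution. First I would fix $v \in W^P$ and apply Proposition \ref{p.cominuscule} to obtain $\xi \in \ft$ with $\ga(\xi) = -1$ for every weight $\ga$ of $T$ on $T_{vP}(G/P)$. All such weights then lie in the half-space $\{\gb \in \ft^* : \gb(\xi) < 0\}$, so $x = vP$ is an attractive fixed point and the hypotheses of Proposition \ref{p.fixedchar} are met; the extra hypothesis of Proposition \ref{p.fixedhilbert} is exactly the property of $\xi$ just secured. (If $v \not\ge w$ then $vP \notin X^w$ and $T_{w,\vs} = \emptyset$, so both sides vanish and I may assume $vP \in X^w$.) Proposition \ref{p.fixedhilbert}, applied with $X = G/P$, $Y = X^w$, and $x = vP$, then gives
\[
H(X^w,v)(t) = \frac{\ev_\xi\, i_v^*[\co_{X^w}]}{(1-t)^d}, \qquad d = \dim G/P,
\]
reducing everything to the evaluation of $\ev_\xi$ on the restriction class.

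Next I would compute $\ev_\xi\, i_v^*[\co_{X^w}]$ from Theorem \ref{t.pullback}. Since $\ev_\xi$ is induced by the substitution $e^\mu \mapsto t^{\mu(\xi)}$ it is multiplicative, so it may be applied to each factor of the products in \eqref{e.pullback}. The decisive input is Remark \ref{r.tangent}: every root $r(c) = s_1 \cdots s_{c-1}(\ga_c)$ occurring in the formula lies in $\Phi(T_v(G/P))$, so $r(c)(\xi) = -1$ by the choice of $\xi$. Hence each factor becomes $\ev_\xi(1 - e^{-r(i_k)}) = 1 - t^{-r(i_k)(\xi)} = 1 - t$, and a subsequence $\vt \in T_{w,\vs}$ with $l(\vt) = m$ contributes $(1-t)^{l(\vt)}$. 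This yields
\[
\ev_\xi\, i_v^*[\co_{X^w}] = \sum_{\vt \in T_{w,\vs}} (-1)^{e(\vt)}\, (1-t)^{l(\vt)},
\]
and dividing by $(1-t)^d$ gives the claimed Hilbert series $H(X^w,v)(t) = \sum_{\vt} (-1)^{e(\vt)} (1-t)^{-(d - l(\vt))}$.

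Finally, for the Hilbert polynomial I would expand each summand using $(1-t)^{-N} = \sum_{n \ge 0} \binom{n + N - 1}{N - 1} t^n$ with $N = d - l(\vt)$; reading off the coefficient of $t^n$ produces exactly $h(X^w,v)(n) = \sum_{\vt} (-1)^{e(\vt)} \binom{n + d - l(\vt) - 1}{d - l(\vt) - 1}$. The step I expect to be the genuine, if modest, obstacle is the assertion that the Hilbert function equals this polynomial for \emph{all} $n$, not merely for $n$ sufficiently large. This hinges on the inequality $l(\vt) \le l(v) \le d$: the first bound is immediate from Definition \ref{d.tvj}, since $\vt$ is a subsequence of the length-$l(v)$ word $\vs$, and the second holds because $l(w_0^P) = \dim G/P$. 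Thus $d - l(\vt) \ge 1$ for every $\vt$, except in the single degenerate case $v = w = w_0^P$, where $X^w$ is a point; away from that case each multiset coefficient $\binom{n + N - 1}{N-1}$ with $N \ge 1$ is a genuine polynomial in $n$ taking the correct value for every $n \ge 0$, so the term-by-term expansion is valid down to $n = 0$ and the Hilbert function is polynomial throughout. I would dispose of the remaining point case by direct inspection.
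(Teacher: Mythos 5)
Your proposal is correct and follows essentially the same route as the paper's proof: Proposition \ref{p.cominuscule} supplies $\xi$, Remark \ref{r.tangent} shows each root $r(c)$ satisfies $r(c)(\xi)=-1$ so that $\ev_\xi$ turns every factor $(1-e^{-r(i_k)})$ of Theorem \ref{t.pullback} into $(1-t)$, and Proposition \ref{p.fixedhilbert} together with the binomial expansion of $(1-t)^{-(d-l(\vt))}$ yields both the Hilbert series and the Hilbert polynomial formulas. Your additional checks---attractiveness of $vP$, the vacuous case $v \not\ge w$, and the degenerate case $d = l(\vt)$ arising only when $v = w$ is the longest element of $W^P$---are details the paper's proof passes over silently, and they tighten rather than alter the argument.
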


\begin{proof}
By Proposition \ref{p.cominuscule}, there exists $\xi$ in $\ft$ so that $\ga(\xi) = -1$ for each weight $\ga$ of $T_v(G/P)$.
By  Remark \ref{r.tangent}, each $r(i)$ is a weight of $T_v(G/P)$.  Hence $\ev_{\xi}(e^{-r(i)}) = t$.  Therefore, Proposition \ref{p.fixedhilbert} and Theorem \ref{t.pullback} imply
\begin{equation*}
\begin{split}
    H(X^w,v)(t)=\frac{\ev_\xi i_v^* [\co_{X^w}]}{(1-t)^d}
        &= \frac{\ev_\xi \sum_{\vt \in T_{w,\vs}} (-1)^{e(\vt)} (1 - e^{-r(i_1)}) (1 - e^{-r(i_2)}) \cdots (1 - e^{-r(i_m)})}{(1-t)^d}\\
        &= \frac{\sum_{\vt \in T_{w,\vs}} (-1)^{e(\vt)} (1-t)^{m}}{(1-t)^d}
        = \sum_{\vt \in T_{w,\vs}} \frac{(-1)^{e(\vt)} }{(1-t)^{d-l(\vt)}}.
\end{split}
\end{equation*}
Recalling the identity
$\displaystyle
    \frac{1}{(1-t)^k}=\sum_{n=0}^{\infty} {n+k-1\choose k-1}t^n,
$
we obtain
\begin{equation*}
\begin{split}
    H(X^w,v)(t)&=\sum_{\vt \in T_{w,\vs}}(-1)^{e(\vt)}\sum_{n=0}^{\infty}
    {n+d-l(\vt)-1\choose d-l(\vt)-1}t^n\\
    &=\sum_{n=0}^{\infty}\left(\sum_{\vt \in T_{w,\vs}}(-1)^{e(\vt)}
    {n+d-l(\vt)-1\choose d-l(\vt)-1}\right)t^n.
\end{split}
\end{equation*}
Thus
\[
h(X^w,v)(n)=\sum_{\vt \in T_{w,\vs}}(-1)^{e(\vt)}
    {n+d-l(\vt)-1\choose d-l(\vt)-1}.
\]
\end{proof}

We may alternatively index the summations for the Hilbert series and Hilbert polynomial by the nonnegative integers $\N$.
\begin{Cor}\label{c.cominusculemult}
Let $G/P$ be a cominuscule generalized flag variety and $v, w \in W^P$, $v\geq w$. Fix a reduced expression $\vs = (s_{1}, \ldots s_{l})$ for $v$. Let $d_w=\dim X^w=\dim G/P-l(w)$. For $k\in\N$, define $m_k=\#\{\vt\in T_{w,\vs}\mid e(\vt)=k\}$.
Then 
\begin{alignat*}{3}
    &H(X^w,v)(t)& &= &\ \ &\sum_{k\in\N}\frac{(-1)^k m_k}{(1-t)^{d_w-k}}\\
    &h(X^w,v)(n)& &=& &\sum_{k\in\N} (-1)^k m_k {n+d_w-k-1\choose d_w-k-1}\\
    &\mult(X^w,v)& &=& &m_0
\end{alignat*}
\begin{proof} Let $d=\dim G/P$.
Note that $e(\vt)=l(\vt)-l(w)$, so $d-l(\vt)=(d-l(w))-e(\vt)=d_w-e(\vt)$. Thus
\begin{equation*}
    H(X^w,v)(t) = \sum_{\vt \in T_{w, \vs}} \frac{(-1)^{e(\vt)}}{(1-t)^{d - l(\vt)}}
    =\sum_{\vt\in T_{w,\vs}}\frac{(-1)^{e(\vt)}}{(1-t)^{d_w-e(\vt)}}
    =\sum_{k\in\N}\frac{(-1)^k m_k}{(1-t)^{d_w-k}}.
\end{equation*}
The formula for the Hilbert polynomial follows similarly. The highest degree term of $h(X^w,v)(n)$ is $m_0 n^{d_w-1}/(d_w-1)!$, implying that $\mult(X^w,v)=m_0$.
\end{proof}
\end{Cor}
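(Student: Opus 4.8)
The plan is to obtain this corollary directly from Theorem~\ref{t.cominusculeformula} by collecting the terms of its two sums over $T_{w,\vs}$ according to the common value of $e(\vt)$. First I would record the elementary identity $d-l(\vt)=(d-l(w))-(l(\vt)-l(w))=d_w-e(\vt)$, which rewrites the exponent $d-l(\vt)$ appearing in Theorem~\ref{t.cominusculeformula} as $d_w-e(\vt)$. Every $\vt\in T_{w,\vs}$ satisfies $e(\vt)\geq 0$, since a product of $m$ generators $H_s$ can equal $H_w$ only when $m\geq l(w)$; moreover $T_{w,\vs}$ is finite, so the counts $m_k$ are supported on finitely many $k\in\N$. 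Partitioning $T_{w,\vs}$ into the fibers $\{\vt\mid e(\vt)=k\}$, each of size $m_k$, then converts the sum over $\vt$ into a sum over $k$, giving the stated Hilbert series formula; applying the same fiber-counting to the Hilbert polynomial formula gives the second identity.

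For the multiplicity I would examine $h(X^w,v)(n)$ term by term. The summand indexed by $k$ is $(-1)^k m_k{n+d_w-k-1\choose d_w-k-1}$, a polynomial in $n$ of degree $d_w-k-1$ with leading coefficient $(-1)^k m_k/(d_w-k-1)!$. Thus the unique top-degree contribution comes from $k=0$: it has degree $d_w-1$ and leading coefficient $m_0/(d_w-1)!$, and since all the $\vt$ with $e(\vt)=0$ carry the same sign $(-1)^0=+1$ there is no cancellation. Recalling from Section~\ref{ss.mult} that $\mult(X^w,v)$ equals $r!$ times the leading coefficient of $h(X^w,v)(n)$, where $r$ is its degree, I would conclude $r=d_w-1$ and $\mult(X^w,v)=(d_w-1)!\cdot m_0/(d_w-1)!=m_0$.

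The one place where the hypothesis $v\geq w$ is needed, and which I expect to be the only genuine subtlety, is the claim that $m_0\neq 0$, so that $d_w-1$ is really the degree of $h(X^w,v)(n)$ and $m_0$ is a genuine leading coefficient rather than $0$. By definition $m_0$ counts the subsequences $\vt$ of $\vs$ with $l(\vt)=l(w)$ and $H_{s_{i_1}}\cdots H_{s_{i_m}}=H_w$, that is, the reduced subwords of the reduced expression $\vs$ for $v$ that represent $w$. The subword characterization of the Bruhat order guarantees such a subword exists exactly when $v\geq w$, so $m_0\geq 1$ under our hypothesis; this is precisely the condition that $vP\in X^w$, which is what makes the multiplicity at $vP$ meaningful.
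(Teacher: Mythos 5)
Your proposal is correct and follows essentially the same route as the paper: rewrite the exponent via $d-l(\vt)=d_w-e(\vt)$, group the terms of Theorem \ref{t.cominusculeformula} by the value $k=e(\vt)$, and read off the multiplicity from the leading term $m_0 n^{d_w-1}/(d_w-1)!$ of the Hilbert polynomial. Your extra care in checking $m_0\geq 1$ when $v\geq w$ (via the subword characterization of the Bruhat order) fills in a detail the paper's proof leaves implicit but records immediately after the corollary, so it is a welcome refinement rather than a different argument.
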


The formula for multiplicity in the above corollary can be restated as follows.
Let $G/P$ be a cominuscule generalized flag variety and $v, w \in W^P$.
Fix a reduced expression $(s_{1}, \ldots s_{l})$  for $v$. Recall that $v\geq w$ in the Bruhat order if and only if there exists a subsequence $(i_1,\ldots,i_m)$ of $({1}, \ldots, {l})$ such that $(s_{i_1},\ldots, s_{i_m})$ is a reduced expression for $w$; in this case, $\mult(X^w,v)$ is equal to the number of such subsequences.

\section{The $0$-Hecke algebra and fully commutative elements} \label{s.nilHecke}
This section contains some results about fully commutative elements and the $0$-Hecke
algebra which we need to connect the pullback formula of Theorem \ref{t.pullback}
with the combinatorics of Young diagrams.

Given any
$q$-tuple $\vs = (s_1, \ldots, s_q)$ of elements of $S$,  let $H_{\vs} = H_{s_1} H_{s_2} \cdots H_{s_q}$.
If $q = 0$ we define $H_{\vs} = H_1$.
Let $(\vs, s)$ denote the $q+1$-tuple $(s_1, \ldots, s_q,s)$, and given
a $q'$-tuple $\vs = (s'_1, \ldots, s'_{q'})$, let $(\vs, \vs') = (s_1, \ldots, s_q, s'_1, \ldots, s'_{q'})$.
The length of a $q$-tuple $\vs$ is $l(\vs) = q$.  If
\begin{equation} \label{e.reduced}
w = s_1 s_2 \cdots s_q
\end{equation}
 and $l(w) = l(\vs)$
then we will say $\vs$ is a reduced expression for $w$.  We will also use the
term reduced expression to refer to
the equation \eqref{e.reduced}.

We begin with some preliminary results.

\begin{Lem} \label{l.minlength}
If $H_{\vs} = H_w$, then $l(\vs) \ge l(w)$. If $l(\vs) = l(w)$, then $\vs$ is a reduced expression for $w$.
\end{Lem}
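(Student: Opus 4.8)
The plan is to induct on $q = l(\vs)$, strengthening the statement slightly so that the induction also records that $H_{\vs}$ is a single basis element. Concretely, I would prove the combined assertion: for every tuple $\vs$ of length $q$ there is a (unique) $w \in W$ with $H_{\vs} = H_w$, this $w$ satisfies $l(w) \le q$, and if $l(w) = q$ then $\vs$ is a reduced expression for $w$. Folding the ``$H_{\vs}$ is a basis element'' claim into the induction avoids having to establish it separately before invoking the inductive hypothesis. Since the relations characterizing the $0$-Hecke algebra are written for left multiplication by a generator $H_s$, the cleanest bookkeeping comes from peeling off the leftmost reflection $s_1$.

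The base case $q = 0$ is immediate: $H_{\vs} = H_1$, so $w = 1$ and the empty tuple is the reduced expression for the identity. For the inductive step, write $\vs = (s_1, \vs')$ with $\vs' = (s_2, \ldots, s_q)$. By the inductive hypothesis $H_{\vs'} = H_u$ for some $u$ with $l(u) \le q - 1$. Then $H_{\vs} = H_{s_1} H_u$, and the defining relations give exactly two possibilities: if $l(s_1 u) > l(u)$ then $H_{\vs} = H_{s_1 u}$, so $w = s_1 u$ and $l(w) = l(u) + 1 \le q$; if $l(s_1 u) < l(u)$ then $H_{\vs} = H_u$, so $w = u$ and $l(w) = l(u) \le q - 1$. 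In both cases $w$ is a single basis element with $l(w) \le q$, which gives the first assertion.

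For the equality case, suppose $l(w) = q$. The two cases above show $l(w) \le l(u) + 1 \le q$, so equality forces both $l(u) = q - 1$ and the length-increasing alternative $w = s_1 u$. Since $l(u) = q - 1 = l(\vs')$, the inductive hypothesis applied to $\vs'$ shows $\vs'$ is a reduced expression for $u$, i.e.\ $u = s_2 \cdots s_q$. Hence $w = s_1 s_2 \cdots s_q$ with $l(w) = q = l(\vs)$, so $\vs$ is a reduced expression for $w$, as desired.

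The argument is routine, and there is no serious obstacle; the only point requiring attention is the direction of multiplication. Because the given relations describe $H_s \cdot H_w$ (left multiplication by a generator), the induction must remove $s_1$ rather than $s_q$, so that the product $H_{s_1} H_u$ matches the form to which the relations apply. Granting that, the length changes by either $0$ or $+1$ at each multiplication step, and tracking these increments is the entire content of the proof.
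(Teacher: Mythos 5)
Your proof is correct, and its skeleton is the same as the paper's: induction on $l(\vs)$, peeling off a single generator and using the fact that each multiplication by a generator either fixes the group element or lengthens it by exactly one, so that $l(\vs) = l(w)$ forces the lengthening alternative at every step. The one genuine difference is the direction of peeling. The paper writes $\vs = (\vt, s_{q+1})$ and removes the \emph{rightmost} generator, concluding from $H_{\vs} = H_u H_{s_{q+1}}$ that $w \in \{u,\, u s_{q+1}\}$; you remove the \emph{leftmost} generator so that $H_{\vs} = H_{s_1} H_u$. Your choice is slightly more faithful to the definitions as stated: the relations in Section \ref{ss.backgroundHecke} are given only for left multiplication by a generator ($H_s H_w$), so the paper's step tacitly invokes the right-handed analogue ($H_u H_s = H_{us}$ if $l(us) > l(u)$, else $H_u$), which holds in the $0$-Hecke algebra but is not among the listed relations. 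Likewise, your strengthening of the induction to include the claim that every product $H_{s_1} \cdots H_{s_q}$ equals a single basis element $H_w$ is something the paper's proof also uses implicitly (when it asserts $H_{\vt} = H_u$ for some $u \in W$); folding it into the induction makes this explicit at no extra cost. Beyond these bookkeeping points the two arguments have identical content.
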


\begin{proof}
We proceed by induction on $l(\vs)$.  If $l(\vs) = 0$ or $l(\vs) = 1$ then the lemma
is trivial.  Suppose the lemma is true for tuples of length $q$, and
$\vs = (\vt, s_{q+1})$, where $\vt = (s_1, \ldots, s_q)$.  Then $H_{\vt} = H_u$ for
$u \in W$ with $l(\vt) \ge l(u)$.  Then $w$ equals either $u$ or $u s_{q+1}$, so
$$
l(\vs) = l(\vt)+1 \ge l(u)+1 \ge l(w).
$$

Now assume that $l(\vs) = l(w)$, so $l(w)=q+1$. Then since $l(u)\leq l(\vt)=q$ and  $l(w)$ equals either $l(u)$ or $l(u)+1$, we must have that $l(u)=q$ and $l(w)=l(u)+1$. Thus $w$ must equal $u s_{q+1}$. By the inductive hypothesis, $u=s_1\cdots s_q$. Therefore $w$ equals $s_1\cdots s_{q+1}$.
\end{proof}

The right (resp.~left) weak order on $W$ is the transitive closure of the relation
$u <_R us$ (resp.~$u<_L su$) for $u \in W$, $s \in S$ with $l(u) < l(us)$ (resp.~$l(u)< l(su)$).
Given a reduced
expression $w = s_1 \cdots s_q$, for any $k<q$, we have $s_1 \ldots s_k <_R w$,
and for any $k>1$, we have $s_k s_{k+1} \cdots s_q <_L w$.
We can extend these results to the Hecke algebra; we only state the version using
$<_R$.

\begin{Prop} \label{p.weakorder}
Suppose that $\vs = (s_1, \ldots, s_q)$ and $H_{\vs} = H_w$.  Suppose that
$k \le q$ and $\vt = (s_1, \ldots, s_k)$ is a reduced expression for $u = s_1 \cdots s_k$.
Then $u \le_R w$.
\end{Prop}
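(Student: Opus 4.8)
The plan is to break the hypothesis $H_{\vs} = H_w$ into its successive single multiplications and to check that each one is either a covering step of the right weak order or an equality. For $0 \le j \le q$, let $u_j \in W$ be the unique element with $H_{u_j} = H_{s_1} H_{s_2} \cdots H_{s_j}$; this is well defined because the $H_w$ ($w \in W$) form an $R$-basis of the $0$-Hecke algebra. Then $u_0 = 1$, $u_q = w$, and $H_{u_j} = H_{u_{j-1}} H_{s_j}$ for every $j$.

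First I would record the dichotomy forced by the defining relations of the $0$-Hecke algebra. If $l(u_{j-1} s_j) > l(u_{j-1})$, then $H_{u_{j-1}} H_{s_j} = H_{u_{j-1} s_j}$, so $u_j = u_{j-1} s_j$ with $l(u_j) = l(u_{j-1}) + 1$, whence $u_{j-1} <_R u_j$ directly from the definition of $<_R$. If instead $l(u_{j-1} s_j) < l(u_{j-1})$, then $H_{u_{j-1}} H_{s_j} = H_{u_{j-1}}$, so $u_j = u_{j-1}$. In both cases $u_{j-1} \le_R u_j$, and chaining through the transitivity of $\le_R$ gives $u_0 \le_R u_1 \le_R \cdots \le_R u_q$; in particular $u_k \le_R u_q = w$.

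It then remains to identify $u_k$ with $u$. Since $\vt = (s_1, \ldots, s_k)$ is a reduced expression for $u$, each of the first $k$ multiplications is length-increasing, so by the multiplication rule $H_{s_1} \cdots H_{s_k} = H_u$ (a one-line induction, or Lemma \ref{l.minlength} applied to the prefix). Comparing with the definition of $u_k$ gives $u_k = u$, and therefore $u \le_R w$. I do not expect a genuine obstacle in this argument: its entire content is the observation that the two branches of the $0$-Hecke multiplication rule correspond respectively to a right-weak-order cover and to an equality. The only points requiring any care are the well-definedness of the $u_j$ (which uses that the $H_w$ are a basis) and the identification $u_k = u$, both immediate from material already established in this section.
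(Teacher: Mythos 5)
Your argument is correct, and it takes a genuinely different route from the paper's. The paper proceeds by induction on $l(\vs)$: by Lemma \ref{l.minlength}, either $l(\vs) = l(w)$, so that $\vs$ itself is reduced and $\vt$ is a prefix of a reduced expression for $w$, or there is a length drop at some position $j+1$ with $j \ge k$, and deleting $s_{j+1}$ leaves $H_{\vs}$ unchanged; the induction then yields a subsequence $\vq$ of $(s_{k+1}, \ldots, s_q)$ such that $(\vt, \vq)$ is a reduced expression for $w$. That conclusion is an explicitly combinatorial strengthening of $u \le_R w$: it produces a reduced word for $w$ having $\vt$ as a prefix. You instead track the partial products $u_j$ defined by $H_{u_j} = H_{s_1} \cdots H_{s_j}$ and observe that each step either leaves $u_{j-1}$ unchanged or right-multiplies it by $s_j$ with a length increase, so $u_0 \le_R u_1 \le_R \cdots \le_R u_q = w$. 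This is shorter, needs no deletion induction, and isolates the key monotonicity of the prefix products, at the cost of not exhibiting a reduced word for $w$ extending $\vt$ --- which the paper never needs anyway, since Proposition \ref{p.statistic2} only uses the conclusion $u \le_R w$.

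Two minor points of rigor, neither fatal. First, your dichotomy is the right-handed multiplication rule ($H_w H_s = H_{ws}$ if $l(ws) > l(w)$, and $H_w H_s = H_w$ if $l(ws) < l(w)$), whereas the relations stated in the paper are left-handed (they describe $H_s H_w$); the right-handed rule is not literally ``forced by the defining relations'' but follows from them by a short induction on $l(w)$ (or via the anti-automorphism $H_w \mapsto H_{w^{-1}}$). The paper itself uses this fact without comment in the proof of Lemma \ref{l.minlength}, so relying on it is fair, but you should flag that it requires an argument. Second, your parenthetical appeal to Lemma \ref{l.minlength} to identify $u_k = u$ points the wrong way: that lemma deduces reducedness of $\vt$ from $H_{\vt} = H_u$ and $l(\vt) = l(u)$, not the converse. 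The one-line induction you mention (each of the first $k$ right multiplications is length-increasing because $\vt$ is reduced) is the correct justification, and it is all you need.
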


\begin{proof}
Let
$\vr = (s_{k+1}, \ldots, s_q)$, so $\vs = (\vt, \vr)$.  It suffices to show that there is a subsequence
$\vq$ of $\vr$ such that $(\vt, \vq)$ is a reduced expression for $w$.  We proceed by induction
on $l(\vs)$.   Lemma \ref{l.minlength} implies that $l(\vs) \ge l(w)$.   If $l(\vs)= l(w)$ we are done.
Otherwise, there is some $j \ge k$ such that
$$
u < u s_{k+1}  < \cdots u s_{k+1} \cdots s_j > u s_{k+1} \cdots s_j s_{j+1}.
$$
Let $\vr'$ denote the sequence $\vr$ with $s_{j+1}$ deleted, and
let $\vs' = (\vt, \vr')$.   Since
$H_{s_1} \ldots H_{s_j} = H_{s_1} \ldots H_{s_j} H_{s_{j+1}}$,
we have $H_{\vs'} = H_{\vs}$.  Our inductive hypothesis to $\vs'$ implies that there is
a subsequence $\vq$ of $\vr'$ such that $(\vt, \vr')$ is a reduced expression for $w$.
Since $\vq$ is also a subsequence of $\vr$, the result follows.
\end{proof}

Given two elements $s,t \in S$, let $m(s,t)$ denote the order of $st$ in $W$.   Given any
$q$-tuple $\vs = (s_1, \ldots, s_q)$ of elements of $S$, let $\vs(s,t)$ denote the sub-tuple of $\vs$ formed by the occurrences of $s$ and
$t$.  For example, if $\vs = (s,t,u,s,u,s,v,s,t)$ then $\vs(s,t) = (s,t,s,s,s,t)$.

Given $w \in W$ of length $q$, let $V_w$ denote the set of all $q$-tuples $\vs$ which are
reduced expressions
for $w$.  Form a graph $G_w$ with vertex set $V_w$, such that $\vs = (s_1, \ldots, s_q), \vt = (t_1, \ldots, t_q) \in V_w$ are joined by an edge if there are elements
$s,t \in S$ and a sequence of indices $i, i+1, i+2, \ldots, i+ m(s,t)$ such that $(s_{i+1}, s_{i+2}, \ldots, s_{i+m(s,t)}) = (s,t, \ldots )$
and $(t_{i+1}, t_{i+2}, \ldots, t_{i+m(s,t)}) = (t,s, \ldots )$ (we will say that this edge corresponds to the braid relation between $s$ and $t$).

An element $w \in W$ is called fully commutative if any reduced
expression for $w$ can be obtained from any other by using only the relations $st = ts$ where
$s$ and $t$ are commuting elements of $S$.  Suppose that $w$ is fully commutative and $m(s,t) \ge 3$
(that is, $s$ and $t$ do not commute).
Then as observed by Stembridge \cite{Ste:96},
there is no edge in the graph corresponding to the braid relation between
$s$ and $t$.   Stembridge also observed that this implies that if $\vs$ and $\vt$ are joined by an edge, then $\vs(s,t) = \vt(s,t)$, so since the graph is connected, $\vs(s,t) = \vt(s,t)$ for any two elements $\vs$ and $\vt$ of $V_w$.  Write
$w(s,t) = \vs(s,t)$ where $\vs$ is any element of $V_w$.

Observe that $w(s,t)$ can have repeated elements.  For example, in type $B_3$, if $w=s_2 s_3 s_2 s_1$ (which is fully
commutative), then $w(s_1,s_2) = (s_2, s_2, s_1)$.

\begin{Lem}\label{l.reflectionsametimes}
 Let $w$ be fully commutative, and let $\vs$, $\vt\in V_w$. Then any $s\in S$ occurs the same number of times in $\vs$ as in $\vt$.
\end{Lem}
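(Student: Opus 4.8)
The plan is to deduce this directly from the invariant $w(s,t)$ introduced in the paragraph preceding the statement. Fix a simple reflection $s\in S$; I want to show that $s$ occurs the same number of times in every element of $V_w$. The key observation is that, for any $t\in S$ with $t\neq s$, the number of occurrences of $s$ in a reduced expression $\vs$ equals the number of occurrences of $s$ in the sub-tuple $\vs(s,t)$: by construction $\vs(s,t)$ is obtained from $\vs$ by deleting exactly the letters different from $s$ and $t$, so every $s$ in $\vs$ survives and no new one is created.

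Now I would invoke Stembridge's observation recorded just above the lemma. Since $w$ is fully commutative, $G_w$ has no edge arising from a braid relation between non-commuting generators, so its only edges are commutation moves; combined with the connectedness of $G_w$, this yields $\vs(s,t)=\vt(s,t)=w(s,t)$ for all $\vs,\vt\in V_w$. Consequently the number of $s$'s in $\vs$ equals the number of $s$'s in the fixed tuple $w(s,t)$, which does not depend on the choice of reduced expression. Hence $s$ occurs the same number of times in $\vs$ as in $\vt$, and since $s$ was arbitrary the lemma follows.

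The only point requiring a little care is the choice of a partner $t\neq s$. If $|S|\geq 2$ such a $t$ always exists, and the argument above applies verbatim; if $|S|=1$ then $W=\{e,s\}$, each element has a unique reduced expression, and the statement is trivial. I do not expect a genuine obstacle, since the substantive content---that commutation moves suffice to pass between any two reduced expressions of a fully commutative element, so that the sub-tuples $\vs(s,t)$ are literally unchanged---has already been established immediately before the statement, and all that remains is the elementary remark that passing to $\vs(s,t)$ preserves the count of $s$. (Alternatively, one could argue more self-containedly by noting that a single commutation move merely transposes two adjacent distinct letters and hence preserves the number of occurrences of each generator, then propagate this along a path in the connected graph $G_w$; but leaning on the already-proved equality $\vs(s,t)=\vt(s,t)$ is shorter.)
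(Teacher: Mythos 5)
Your main argument has a genuine gap. The invariance $\vs(s,t)=\vt(s,t)$ that you lean on --- and indeed the very definition of $w(s,t)$ in the paper --- is stated and proved only for \emph{non-commuting} pairs, i.e.\ under the hypothesis $m(s,t)\ge 3$. It is false for commuting pairs: take $S=\{s,t\}$ with $m(s,t)=2$ and $w=st$, which is fully commutative; then $\vs=(s,t)$ and $\vt=(t,s)$ both lie in $V_w$, but $\vs(s,t)=(s,t)\neq(t,s)=\vt(s,t)$, because the edge joining them is precisely a commutation move between $s$ and $t$ themselves. (The reason the invariance holds when $m(s,t)\ge 3$ is that then no edge of $G_w$ can swap an adjacent $s$ and $t$: braid edges are excluded by full commutativity and commutation edges between $s$ and $t$ do not exist.) So your claim that ``any $t\neq s$ works'' is wrong; you would need to choose a partner $t$ with $m(s,t)\ge 3$. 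Even with that patch a case remains uncovered: if $s$ commutes with every other element of $S$ (an isolated node of the Coxeter diagram), no admissible partner exists, and your argument says nothing about the count of $s$. One can handle that case separately (e.g.\ two occurrences of such an $s$ in a word could be commuted together and cancelled, contradicting reducedness, and the residual $0$/$1$ dichotomy is determined by $w$), but this is an extra argument you did not supply.

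The irony is that the route you relegate to a parenthesis is the correct and complete proof, and it is exactly the paper's: since $w$ is fully commutative, every edge of $G_w$ is a commutation move; a commutation move merely transposes two adjacent distinct letters and hence preserves the number of occurrences of every generator; connectedness of $G_w$ then propagates the equality of counts to all of $V_w$. That argument needs no choice of partner $t$ and no case analysis on the Coxeter diagram, so it should be promoted from the parenthesis to the body of the proof.
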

\begin{proof}
If two elements of $V_w$ are connected by an edge, then they differ only by the interchange of two elements of $S$. Therefore $s$ must occur the same number of times in both elements.  Since the graph $G_w$ is connected, the result follows.
\end{proof}

\begin{Prop} \label{p.statistic2}
Suppose $\vs = (s_1, \ldots, s_q)$ satisfies $H_{\vs} = H_w$, where $w$ is fully commutative,
and suppose that $q > l = l(w)$.   Then there exist $i<j$ such that $s_i=s_j$ and $s_i$
commutes with $s_k$ for every $k$, $i<k<j$.
\end{Prop}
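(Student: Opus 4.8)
The plan is to locate the first place where the word $\vs$ stops being reduced and to read off the desired pair from the simple reflection that causes the failure. Concretely, I define $u_0 = 1$ and $u_c \in W$ by $H_{s_1} \cdots H_{s_c} = H_{u_c}$, so that $u_q = w$ and $l(u_c) \le l(u_{c-1}) + 1$, with equality unless $s_c$ is a right descent of $u_{c-1}$. If the length increased at every step we would get $l(w) = l(u_q) = q$, contradicting $q > l(w)$; so I take $c$ to be the first index with $l(u_c) = l(u_{c-1})$. By minimality of $c$ the word $s_1 \cdots s_{c-1}$ has length $l(u_{c-1}) = c-1$, hence is reduced for $u := u_{c-1}$ (cf. Lemma \ref{l.minlength}), and $s := s_c$ is a right descent of $u$. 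Since $s_1 \cdots s_{c-1}$ is reduced, Proposition \ref{p.weakorder} gives $u \le_R w$, so a reduced word of $u$ extends to a reduced word of $w$; any alternating braid factor $\underbrace{sts\cdots}_{m(s,t)}$ (with $m(s,t) \ge 3$) appearing in some reduced word of $u$ would then survive in a reduced word of $w$, so Stembridge's characterization \cite{Ste:96} forces $u$ to be fully commutative. In particular the reflections occurring in a reduced word of $u$ are well defined by Lemma \ref{l.reflectionsametimes}, and the right descent $s$ is among them; I let $a$ be its last occurrence in $s_1 \cdots s_{c-1}$. The pair $(i,j) = (a,c)$ is the candidate: it already has $s_i = s_j = s$, so the entire proposition reduces to showing that $s$ commutes with each of $s_{a+1}, \ldots, s_{c-1}$.

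The crux is therefore the following claim, which I would isolate and prove separately: \emph{if $s$ is a right descent of a fully commutative element $u$ with reduced word $s_1 \cdots s_{c-1}$, then the last occurrence $s_a$ of $s$ commutes with all subsequent letters $s_{a+1}, \ldots, s_{c-1}$.} To prove it I would use the invariance of the relative order of non-commuting letters across a commutation class. Because $u$ is fully commutative, every reduced word of $u$ is obtained from $s_1 \cdots s_{c-1}$ by interchanging adjacent \emph{commuting} simple reflections; such a move never transposes two non-commuting letters, and two occurrences change their left-to-right order only by being transposed with each other, so for any two occurrences with non-commuting labels the order is the same in every reduced word of $u$. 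Now suppose some $s_k$ with $a < k \le c-1$ fails to commute with $s$, i.e. $m(s, s_k) \ge 3$. As $a$ is the last occurrence of $s$, the letter $s_k$ lies to the right of every occurrence of $s$ in $s_1 \cdots s_{c-1}$, and by the invariance just noted this persists in every reduced word of $u$; hence no reduced word of $u$ ends in $s$. But $s$ is a right descent, so $l(us) = l(u)-1$, and appending $s$ to any reduced word of $us$ produces a reduced word of $u$ ending in $s$ — a contradiction. This proves the claim, and with $(i,j)=(a,c)$ it proves the proposition; note that when $a = c-1$ the range $s_{a+1}, \ldots, s_{c-1}$ is empty and the conclusion is immediate.

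The main obstacle is precisely this claim. The difficulty is that the hypothesis is stated in terms of the $0$-Hecke algebra and Bruhat/Demazure data ($s$ being a descent), while the desired conclusion is a purely commutation-theoretic statement about the word, and one must bridge the two. The key realization that makes the bridge work is that full commutativity upgrades ``the relative order of a non-commuting pair'' from a property of one particular reduced word into a genuine invariant of the element $u$; this is exactly what makes a descent that is ``blocked'' by a non-commuting letter impossible. Two points would need a little care in the write-up: verifying that the prefix $u = u_{c-1}$ is itself fully commutative (handled above via Proposition \ref{p.weakorder} and \cite{Ste:96}), and justifying that a right descent lies in the support of $u$ so that the last occurrence $a$ actually exists (handled via Lemma \ref{l.reflectionsametimes}).
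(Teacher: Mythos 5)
Your proof is correct and follows essentially the same route as the paper's: locate the first index at which the word stops being reduced, use Proposition \ref{p.weakorder} together with Stembridge's result to conclude the prefix $u$ is fully commutative, take the last occurrence of the offending letter $s$, and derive a contradiction from the invariance of the relative order of non-commuting letters, which is exactly the paper's $u(s,t)$ invariant. The only differences are cosmetic: you phrase the first step via the $0$-Hecke elements $u_c$ rather than partial products in $W$, and you re-derive the $(s,t)$-subword invariance inline instead of citing the statement of it given in Section \ref{s.nilHecke}.
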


\begin{proof}
Since $q>l$, there is some index
$j$ such that
\begin{equation} \label{e.statistic}
s_1 < s_1 s_2 < \cdots < s_1 s_2 \ldots s_{j-1}>s_1 s_2 \ldots s_{j-1} s_j.
\end{equation}
Let $s = s_j$ and $u =  s_1 s_2 \cdots s_{j-1}$; then
$\vs' = (s_1, \ldots, s_{j-1})$ is a reduced expression for $u$.  By Proposition \ref{p.weakorder},
$u \le_R w$, so $u$ is fully commutative (see \cite{Ste:96}).  Because $us<s$, there
is a reduced expression for $u$ which ends in $s$.  Therefore any reduced expression for $u$ must
have at least one term equal to $s$.
In particular this holds for the reduced expression $\vs'$.
Let $i$ be the largest integer with $1 \le i \le j-1$ satisfying
$s_i=s$.  It suffices
to show that $s_k$ commutes with $s$ for all $k$ with $i<k<j$.  Suppose this fails; then
$t = s_k$ does not commute with $s$ for some $k$ with $i<k<j$.
We have chosen $i$ so that $s$ is not an element of the set $\{s_{k+1} , s_{k+2}, \ldots,s_{ j-1} \}$.
Therefore $u(s,t) = \vs'(s,t)$ ends in $t$.  On the other hand, $u(s,t) = s$ since $u$ has a reduced expression
ending in $s$.  This is a contradiction.  We conclude that $s_k$ commutes with $s$ for all $k$ with $i<k<j$, as
desired.
\end{proof}

\section{Applications to the Grassmannian} \label{s.Grassmannian}

Let $G$ be $\SL_n(\C)$, $P_d$ the maximal parabolic subgroup of $G$ corresponding to the simple root $\ga_d$, $B^-$ the Borel subgroup of lower triangular matrices in $G$, and $T$ the group of diagonal matrices in $G$.  The Weyl group $W=N_G(T)/T$ is isomorphic to $S_n$, the permutation group on $n$ elements. The Weyl group $W_{P_d}$ of $P_d$ is isomorphic to $S_d\times S_{n-d}$, and the set $W^{P_d}$ of minimal length coset representatives of $W/W_{P_d}$ consists of the permutations $w=(w_1,\ldots,w_n)$ such that $w_1<\cdots<w_d$ and $w_{d+1}<\cdots<w_n$.
The coset space $G/P_d$ is identified with $\Grass(d,n)$, the Grassmannian variety of $d$-dimensional complex subspaces of $\C^n$. It is an irreducible projective variety of complex dimension $d(n-d)$. The cosets $wP_d$, $w\in W^{P_d}$, are precisely the $T$-fixed points of $G/P_d$. By abuse of notation, we often denote $wP_d$ by $w$. The Schubert variety $X^w$ is by definition $\bar{B^-wP_d}\subseteq G/P_d$. It is an irreducible projective variety of codimension $l(w)$. For $v,w\in W^{P_d}$, $v\in X^w$ if and only if $v\geq w$ in the Bruhat order.

In this section, we give formulas for $i_v^*[\co_{X^w}]$ (Theorems \ref{t.ktheory_eyd} and \ref{t.ktheory_svt}), as well as the Hilbert series, Hilbert polynomial, and multiplicity of $X^w$ at $v$ (Section \ref{ss.hilbseries_An}). These are reformulations of Theorem \ref{t.pullback} and Corollary \ref{c.cominusculemult}, expressed in terms of indexing sets which are specific to the combinatorics of the the symmetric group and the Grassmannian. Namely, our indexing sets are {\it excited Young diagrams} and {\it set-valued tableaux}.
The term excited Young diagram is due to Ikeda and Naruse \cite{IkNa:09}; in \cite{Kre:05} this is called a subset of a Young diagram.
In this paper we have modified the definition of excited Young diagram for our applications
to $K$-theory;
the earlier definition corresponds to our {\it reduced} excited Young diagram.
Reduced excited Young diagrams were discovered independently by Kreiman \cite{Kre:05} and Ikeda and Naruse \cite{IkNa:09}.   A related version of excited Young diagram introduced in \cite{IkNa:11} is discussed in Section \ref{ss.previous}.

Our formula for $i_v^*[\co_{X^w}]$, expressed in terms of set-valued tableaux, was obtained in \cite{Kre:05}. The derivation of the formula there relies on an equivariant Gr\"{o}bner degeneration of $X^w$ in a neighborhood of $v$ to a union of coordinate subspaces; this degeneration is due to \cite{KoRa:03}, \cite{Kre:03}, \cite{KrLa:04}, and \cite{Kre:08}. In \cite{Kre:05}, computing $i_v^*[\co_{X^w}]$ involves cataloging the weights of the intersections of these coordinate subspaces. Each coordinate subspace is expressed as $V(m_T)$, where $m_T$ is a monomial indexed by the entries of a Young tableau $T$. The intersection $V(m_{T_1})\cap\cdots \cap V(m_{T_k})$ is then equal to $V(m_{T_1},\ldots,m_{T_k})=V(m_{T_1\cup\cdots\cup T_k})$,
where $T_1\cup\cdots\cup T_k$ is the set-valued tableau whose entry in each box is equal to the union of the entries of $T_1,\ldots,T_k$ in the same box. In this way set-valued tableaux arise naturally from this approach.

Our approach is different. In both this section and the next one, our methods are modeled on those of Ikeda and Naruse \cite{IkNa:09}. We generalize their arguments from reduced excited Young diagrams to excited Young diagrams, and correspondingly from nil-Coxeter algebras to 0-Hecke algebras. In several places, we use their results directly.
Whereas set-valued tableaux are more suitable for the methods used in \cite{Kre:05}, excited Young diagrams are more suitable for the methods used here.

\subsection{Permutations, partitions, and Young diagrams}\label{ss.partitions_An}
A \textbf{partition} is a sequence of integers $\gl=(\gl_1,\ldots,\gl_d)$
such that $\gl_1\geq\cdots\geq\gl_d\geq 0$. Let $\cp_{d,n-d}$ denote the set of partitions such that $\gl_1\leq n-d$. A \textbf{Young diagram} is a set of boxes arranged in a left justified array, such that the row lengths weakly decrease from top to bottom. To any partition $\gl$ we associate the Young diagram $D_\gl$ whose $i$-th row contains $\gl_i$ boxes.

\begin{example} Let $d=5$, $n=11$, and $\gl=(4,4,2,1)\in\cp_{d,n-d}$. The Young diagram $D_\gl$ fits inside a $d\times (n-d)$ rectangle.
\begin{center}
\begin{tikzpicture}[scale=.4]
\draw (.5,.5) -- ++(6,0) -- ++(0,5) -- ++(-6,0) -- cycle;
\foreach \pos in
      { {(1,5)},{(2,5)},{(3,5)},{(4,5)},
        {(1,4)},{(2,4)},{(3,4)},{(4,4)},
        {(1,3)}, {(2,3)},
        {(1,2)}
      }
          {\draw \pos +(-.5,-.5) rectangle ++(.5,.5);}
\end{tikzpicture}
\end{center}
\end{example}

The map $W^{P_d}\to \cp_{d,n}$ given by $v\mapsto \gl_v$, where
\begin{equation}\label{e.lambdav}
(\gl_v)_i=v_{d+1-i}-(d+1-i), i=1,\ldots,d,
\end{equation}
is a bijection.
Thus $W^{P_d}$, $\cp_{d,n}$, and the set of Young diagrams which fit inside a $d\times (n-d)$ rectangle can all be identified; each parametrizes the $T$-fixed points of $G/P_d$. We now record several properties of these sets and relationships among them.

An \textbf{inversion of a permutation} $v=\{v_1,\ldots,v_n\}\in S_n$ is a pair $(i,j)$ for which $i<j$ and $v_i>v_j$. The number of inversions of $v$ equals $l(v)$.

Assume that $v\in W^{P_d}$. If $(i,j)$
is an inversion, then $i\in \{1,\ldots,d\}$ and $j\in\{d+1,\ldots,n\}$. For $i\in\{1,\ldots,d\}$ and $j\in\{d+1,\ldots,n\}$, define $l_i(v)=\#\{k\in \{d+1,\ldots,n\}\mid v_i>v_k\}$ and $l^j(v)=\#\{k\in \{1,\ldots,d\}\mid v_k>v_j\}$.
Then
\begin{equation}\label{e.lenghtcomps}
    l_i(v)=v_i-i, \qquad
    l^j(v)=j-v_j,
\end{equation}
and
\begin{equation}\label{e.lengthfromcomps}
    l(v)=\sum\limits_{i=1}^d l_i(v)=\sum\limits_{j=d+1}^n l^j(v).
\end{equation}
Equations \eqref{e.lambdav} and \eqref{e.lenghtcomps} imply
\begin{equation}\label{e.partitionfromcomps}
    (\gl_v)_i=l_{d+1-i}(v).
\end{equation}
The length $|\gl|$ of a partition $\gl$ is defined to be $\gl_1+\cdots+\gl_d$.
\begin{Lem}\label{l.lengthofperm}
For $v\in W^{P_d}$, $|\gl_v|=l(v)$.
\end{Lem}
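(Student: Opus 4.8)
The plan is to derive the identity directly from the two bookkeeping relations established immediately before the lemma, namely \eqref{e.partitionfromcomps} and \eqref{e.lengthfromcomps}. First I would unwind the definition of the length of a partition, writing
$$
|\gl_v| = \sum_{i=1}^d (\gl_v)_i.
$$
Next I would substitute \eqref{e.partitionfromcomps}, which expresses each part of $\gl_v$ as a length component of $v$, namely $(\gl_v)_i = l_{d+1-i}(v)$, to obtain $|\gl_v| = \sum_{i=1}^d l_{d+1-i}(v)$.

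The one genuine (but entirely routine) step is the reindexing: as $i$ runs over $\{1,\ldots,d\}$, the index $k = d+1-i$ runs bijectively over the same set in reverse order, so
$$
\sum_{i=1}^d l_{d+1-i}(v) = \sum_{k=1}^d l_k(v).
$$
Finally I would invoke \eqref{e.lengthfromcomps}, which asserts precisely that $\sum_{k=1}^d l_k(v) = l(v)$, yielding $|\gl_v| = l(v)$ as desired.

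There is no real obstacle here; the statement is purely a matter of collating the definitions. If one preferred to avoid citing \eqref{e.partitionfromcomps} and argue from the permutation entries directly, one could instead combine the definition $(\gl_v)_i = v_{d+1-i} - (d+1-i)$ from \eqref{e.lambdav} with the formula $l_i(v) = v_i - i$ from \eqref{e.lenghtcomps}; under the same reindexing both $|\gl_v|$ and $l(v) = \sum_{i=1}^d l_i(v)$ reduce to $\sum_{i=1}^d (v_i - i)$, making the equality manifest. Either route is short, so I would present the first, which simply chains \eqref{e.partitionfromcomps} and \eqref{e.lengthfromcomps} together with the reversal of the summation index.
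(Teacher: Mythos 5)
Your proposal is correct and is essentially identical to the paper's proof: both chain the definition of $|\gl_v|$ with \eqref{e.partitionfromcomps}, reindex the sum, and conclude by \eqref{e.lengthfromcomps}. Nothing to add.
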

\begin{proof}
By \eqref{e.partitionfromcomps} and  \eqref{e.lengthfromcomps}, $|\gl_v|=\sum_{i=1}^d (\gl_v)_i=\sum_{i=1}^d l_{d+1-i}(v)=\sum_{i=1}^d l_i(v)=l(v)$.
\end{proof}

For $\gl\in \cp_{d,n}$, define the partition $\gl^t=(\gl^t_1,\ldots,\gl^t_{n-d})\in P_{n-d,n  }$ by $\gl^t_j=\#\{i\in \{1,\ldots,d\}\mid \gl_i\geq j\}$, $j=1,\ldots,n-d$. Then $\gl^t_j$ is the number of boxes in the $j$-th column of $D_\gl$. We call $\gl^t$ the \textbf{transpose} of $\gl$.
\begin{Lem}\label{l.trans_partition}
For $v\in W^{P_d}$, $(\gl_v)^t_j=(d+j)-v_{d+j}$, $j=1,\ldots,n-d$.
\end{Lem}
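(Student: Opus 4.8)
The plan is to unwind the definition of the transpose and reduce everything to the elementary counting identity \eqref{e.lenghtcomps}. First I would write out the definition of $(\gl_v)^t_j$ directly: by the definition of transpose,
$$
(\gl_v)^t_j = \#\{i\in\{1,\ldots,d\}\mid (\gl_v)_i\geq j\}.
$$
Then I would substitute the formula $(\gl_v)_i = v_{d+1-i}-(d+1-i)$ from \eqref{e.lambdav} and reindex by $k=d+1-i$, which again runs over $\{1,\ldots,d\}$. This turns the condition $(\gl_v)_i\geq j$ into $v_k-k\geq j$, giving
$$
(\gl_v)^t_j = \#\{k\in\{1,\ldots,d\}\mid v_k-k\geq j\}.
$$

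The heart of the argument is to show that, for $v\in W^{P_d}$, the condition $v_k-k\geq j$ is equivalent to $v_k>v_{d+j}$. Here I would use the interpretation of $v_k-k$ coming from \eqref{e.lenghtcomps}: since $v_1<\cdots<v_d$, the quantity $v_k-k=l_k(v)$ counts precisely the second-block values $v_l$ (with $l\in\{d+1,\ldots,n\}$) that are smaller than $v_k$. Because the second block is also increasing, $v_{d+1}<v_{d+2}<\cdots<v_n$, having at least $j$ of these values below $v_k$ is the same as requiring $v_{d+j}<v_k$. This gives the set equality
$$
\{k\in\{1,\ldots,d\}\mid v_k-k\geq j\} = \{k\in\{1,\ldots,d\}\mid v_k>v_{d+j}\},
$$
not merely an equality of cardinalities.

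Finally I would count the right-hand set: by definition $\#\{k\in\{1,\ldots,d\}\mid v_k>v_{d+j}\}=l^{d+j}(v)$, and \eqref{e.lenghtcomps} evaluates this to $(d+j)-v_{d+j}$. Combining the three displays yields $(\gl_v)^t_j=(d+j)-v_{d+j}$, as claimed. I expect the main (indeed the only) obstacle to be the equivalence $v_k-k\geq j\iff v_k>v_{d+j}$; the subtlety is to track that $v_k-k$ measures the number of \emph{second-block} entries below $v_k$, so that the monotonicity of the second block converts a counting inequality into the single comparison with $v_{d+j}$. Everything else is bookkeeping with the two defining formulas in \eqref{e.lambdav} and \eqref{e.lenghtcomps}.
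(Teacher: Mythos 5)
Your proof is correct and follows essentially the same route as the paper's: both unwind the definition of the transpose, identify the condition $(\gl_v)_i\geq j$ with $l_k(v)\geq j$ via \eqref{e.lambdav}/\eqref{e.lenghtcomps}, convert this to $v_k>v_{d+j}$ using the monotonicity of the second block, and finish with $l^{d+j}(v)=(d+j)-v_{d+j}$. The only difference is that you spell out the justification of the middle equivalence, which the paper's chain of equalities leaves implicit.
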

\begin{proof}
$(\gl_v)^t_j
    =\#\{i\in \{1,\ldots,d\}\mid (\gl_v)_i\geq j\}
    =\#\{i\in \{1,\ldots,d\}\mid l_{d+1-i}(v)\geq j\}
    =\#\{i\in \{1,\ldots,d\}\mid l_i(v)\geq j\}
    =\#\{i\in \{1,\ldots,d\}\mid v_i>v_{d+j}\}
    =l^{d+j}(v)
    =(d+j)-v_{d+j}.
$
\end{proof}

The following lemma will be needed in Section \ref{s.orthosymplectic}. We say that a partition $\gl$ is \textbf{symmetric} if $\gl^t=\gl$, and that a Young diagram is symmetric if the length of column $j$ equals the length of row $j$ for all $j$. Clearly $\gl$ is symmetric if and only if $D_\gl$ is symmetric. We say that a permutation $v=(v_1,\ldots,v_{2n})$ is \textbf{symmetric} if $v_{\bar{\imath}}=\bar{v_i}$, $i=1,\ldots,n$, where $\bar{x}=2n+1-x$.
\begin{Lem}\label{l.symmsymm}
The permutation $v=(v_1,\ldots,v_{2n})\in W^{P_{n}}$ is symmetric if and only if $\gl_{v}$ is symmetric.
\end{Lem}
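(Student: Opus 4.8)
The plan is to reduce both conditions to the single family of equations $v_i + v_{\bar{\imath}} = 2n+1$ and observe that they coincide. First I would note that in this situation the relevant Grassmannian is the $\Grass(n,2n)$ case, so $d = n$ and $n-d = n$; consequently both $\gl_v$ and its transpose $\gl_v^t$ are partitions with $n$ parts, and $\gl_v$ is symmetric precisely when $(\gl_v)_j = (\gl_v)^t_j$ for every $j = 1, \ldots, n$. Next I would substitute the explicit formulas. Equation \eqref{e.lambdav} with $d = n$ gives $(\gl_v)_j = v_{n+1-j} - (n+1-j)$, while Lemma \ref{l.trans_partition} gives $(\gl_v)^t_j = (n+j) - v_{n+j}$. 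Setting these equal and rearranging converts the symmetry of $\gl_v$ into the system
\[
v_{n+1-j} + v_{n+j} = (n+j) + (n+1-j) = 2n+1, \qquad j = 1, \ldots, n.
\]

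Then I would match this against the definition of $v$ being symmetric, namely $v_{\bar{\imath}} = \bar{v_i}$, which upon unwinding the involution $\bar{x} = 2n+1-x$ reads $v_i + v_{2n+1-i} = 2n+1$ for $i = 1, \ldots, n$. The substitution $i = n+1-j$ is a bijection of $\{1,\ldots,n\}$ with itself, and under it $2n+1-i = n+j$; hence the system arising from the symmetry of $\gl_v$ is literally the same family of equations as the symmetry of $v$. Since every step in the chain is an equivalence, the two implications follow simultaneously, and there is no need to argue the directions separately.

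I expect no genuine obstacle here: the content is an index computation, and the work is entirely in the bookkeeping. The two points requiring care are confirming that $\gl_v$ and $\gl_v^t$ carry the same number of parts (so that equality of the two partitions is equality term by term over the full range $j = 1, \ldots, n$, rather than over two ranges of different lengths), and verifying that the reindexing $i \leftrightarrow n+1-j$ is a bijection of $\{1,\ldots,n\}$ that aligns the two index ranges exactly. Once these are checked, the equivalence of ``$v$ symmetric'' and ``$\gl_v$ symmetric'' is immediate.
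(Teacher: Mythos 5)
Your proof is correct and is essentially the same as the paper's: both reduce the two conditions, via equation \eqref{e.lambdav}, Lemma \ref{l.trans_partition}, and the reindexing $i = n+1-j$, to the single system $v_{n+1-j}+v_{n+j}=2n+1$ for $j=1,\ldots,n$, argued as a chain of equivalences. The only cosmetic difference is that you start from the symmetry of $\gl_v$ while the paper starts from the symmetry of $v$.
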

\begin{proof} We have
{\allowdisplaybreaks
\begin{align*}
v_{\bar{\imath}}=\bar{v_i}, i=1,\ldots, n
    &\iff v_{\bar{\imath}}+v_i=2n+1,i=1,\ldots,n\\
    &\iff v_{n+j}+v_{n+1-j}=2n+1, j=1,\ldots, n\\
    &\iff (n+j)-v_{n+j}=v_{n+1-j}-(n+1-j), j=1,\ldots, n\\
    &\iff (\gl_v)^t_j=(\gl_v)_j, j=1,\ldots, n
\end{align*}
}
\end{proof}

\subsection{Restriction formula in terms of excited Young diagrams}
\label{ss.eyd}
  Our convention is to number the rows of a Young diagram from top to bottom and the columns from left to right. The box in row $i$ and column $j$ is denoted by $(i,j)$.

 Suppose that  $C$ is any subset of $D_\gl$, $(i,j)\in C$, and $(i+1,j),(i,j+1),(i+1,j+1)\in D_\gl\setminus C$. Define an \textbf{excitation of type 1} to be an operation which replaces $C$ by $C'=C\setminus (i,j)\cup (i+1,j+1)$, and denote such an operation by
 $C\exco C'$. Define an \textbf{excitation of type 2} to be an operation which replaces $C$ by $C''=C\cup (i+1,j+1)$, and denote such an operation by $C\exct C''$.

\begin{figure}[height=30em]
    \centering
    \begin{tikzpicture}[
    level distance = 4cm,sibling distance = 2cm, grow = east,
    emph/.style={edge from parent/.style={draw,double distance =
    1.5pt,>=latex,->}},
    norm/.style={edge from parent/.style={draw,>=latex,->}}
]
\node [rectangle]  {%
    \begin{tikzpicture}[scale=.4]
    \foreach \pos in
    {{(1,4)},{(3,4)},{(1,3)},{(1,2)},{(4,2)}}
        {\draw[fill=blue!30] \pos +(-.5,-.5) rectangle ++(.5,.5);}
     \foreach \pos in
      { {(1,4)},{(2,4)},{(3,4)},{(4,4)},{(5,4)},
        {(1,3)},{(2,3)},{(3,3)},{(4,3)},{(5,3)},
        {(1,2)}, {(2,2)},{(3,2)},{(4,2)},
        {(1,1)},{(2,1)},{(3,1)}
      }
          {\draw \pos +(-.5,-.5) rectangle ++(.5,.5);}
    \end{tikzpicture}
}
    child[norm] {node [rectangle]
    {
        \begin{tikzpicture}[scale=.4]
        \foreach \pos in
        {{(1,4)},{(4,3)},{(1,3)},{(1,2)},{(4,2)}}
            {\draw[fill=blue!30] \pos +(-.5,-.5) rectangle ++(.5,.5);}
         \foreach \pos in
          { {(1,4)},{(2,4)},{(3,4)},{(4,4)},{(5,4)},
            {(1,3)},{(2,3)},{(3,3)},{(4,3)},{(5,3)},
            {(1,2)}, {(2,2)},{(3,2)},{(4,2)},
            {(1,1)},{(2,1)},{(3,1)}
          }
              {\draw \pos +(-.5,-.5) rectangle ++(.5,.5);}
        \end{tikzpicture}
        }
    }
    child[emph] {node [rectangle]
    {
        \begin{tikzpicture}[scale=.4]
        \foreach \pos in
        {{(1,4)},{(3,4)},{(4,3)},{(1,3)},{(1,2)},{(4,2)}}
            {\draw[fill=blue!30] \pos +(-.5,-.5) rectangle ++(.5,.5);}
         \foreach \pos in
          { {(1,4)},{(2,4)},{(3,4)},{(4,4)},{(5,4)},
            {(1,3)},{(2,3)},{(3,3)},{(4,3)},{(5,3)},
            {(1,2)}, {(2,2)},{(3,2)},{(4,2)},
            {(1,1)},{(2,1)},{(3,1)}
          }
              {\draw \pos +(-.5,-.5) rectangle ++(.5,.5);}
        \end{tikzpicture}
        }
    }
    child[emph] {node [rectangle]
    {
        \begin{tikzpicture}[scale=.4]
        \foreach \pos in
        {{(1,4)},{(3,4)},{(1,3)},{(1,2)},{(2,1)},{(4,2)}}
            {\draw[fill=blue!30] \pos +(-.5,-.5) rectangle ++(.5,.5);}
         \foreach \pos in
          { {(1,4)},{(2,4)},{(3,4)},{(4,4)},{(5,4)},
            {(1,3)},{(2,3)},{(3,3)},{(4,3)},{(5,3)},
            {(1,2)}, {(2,2)},{(3,2)},{(4,2)},
            {(1,1)},{(2,1)},{(3,1)}
          }
              {\draw \pos +(-.5,-.5) rectangle ++(.5,.5);}
        \end{tikzpicture}
        }
    }
    child[norm] {node [rectangle]
    {
        \begin{tikzpicture}[scale=.4]
        \foreach \pos in
        {{(1,4)},{(3,4)},{(1,3)},{(2,1)},{(4,2)}}
            {\draw[fill=blue!30] \pos +(-.5,-.5) rectangle ++(.5,.5);}
         \foreach \pos in
          { {(1,4)},{(2,4)},{(3,4)},{(4,4)},{(5,4)},
            {(1,3)},{(2,3)},{(3,3)},{(4,3)},{(5,3)},
            {(1,2)}, {(2,2)},{(3,2)},{(4,2)},
            {(1,1)},{(2,1)},{(3,1)}
          }
              {\draw \pos +(-.5,-.5) rectangle ++(.5,.5);}
        \end{tikzpicture}
        }
    };
\end{tikzpicture}
    \caption[Figure]{\label{f.ladder} The Young diagram $D_\gl$, $\gl=(5,5,4,3)$, appears on the left. The five shaded boxes of $D_\gl$ form a subset $C$ of $D_\gl$. The top arrow is the type 1 excitation
    $C\exco C\setminus (3,1)\cup (4,2)$.  The next arrow is the type 2 excitation $C\exct C\cup (4,2)$.
    The two other excitations which can be applied to $C$ are also shown.}
\end{figure}
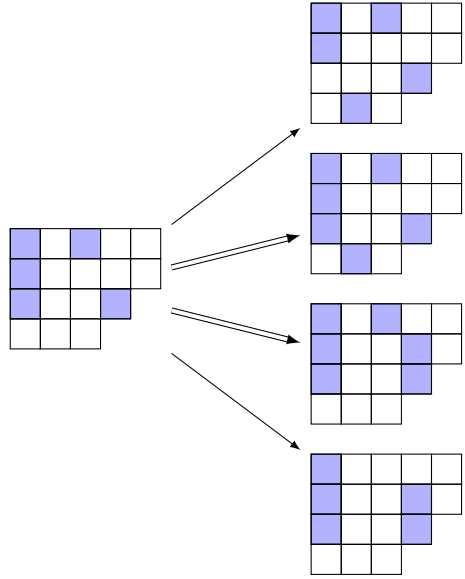

Let $\gl,\gm\in I_{d,n}$. If $\gl_i\leq \gm_i$, $i=1,\ldots,d$, then the map $(i,j)\mapsto (i,j)$ embeds $D_\gl$ as a subset of $D_\gm$. An \textbf{excited Young diagram} of $D_\gl$ in $D_\gm$ is defined to be a subset of $D_\gm$ which can be obtained by applying a sequence of excitations to the subset $D_\gl$. An excited Young diagram is said to be \textbf{reduced} if it can be obtained from $D_\gl$ by applying only type 1 excitations. Denote the set of excited Young diagrams of $D_\gl$ in $D_\gm$ by $\ce_\gl(\gm)$, and the set of reduced excited Young diagrams by $\ce^{\text{red}}_\gl(\gm)$ (see Figure \ref{f.eyd_all}).

\begin{figure}[height=30em]
    \centering
    \input{pic.eyd_all.tex}
    \caption{\label{f.eyd_all} The excited Young diagrams $\ce_\gl(\gm)$, for $\gm=(4,4,2)$, $\gl=(2,1)$. The excited Young diagrams enclosed by the dashed line comprise $\ce^{\text{red}}_\gl(\gm)$.}
\end{figure}

\begin{Thm}\label{t.ktheory_eyd} Let $w\leq v\in W^{P_d}$, and let $\gl=\gl_w$, $\gm=\gl_v$ be the corresponding partitions. Then
$\displaystyle
    i_v^*[\co_{X^w}]=(-1)^{l(w)}\sum_{C\in\ce_{\gl}(\gm)}\prod_{(i,j)\in C}\left(e^{\gre_{v_{d+1-i}}-\gre_{v_{d+j}}}-1\right)
$
\end{Thm}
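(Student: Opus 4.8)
The plan is to specialize the $0$-Hecke pullback formula of Theorem~\ref{t.pullback} to the Grassmannian and to reorganize its index set $T_{w,\vs}$ into the set $\ce_\gl(\gm)$ of excited Young diagrams. First I would fix a convenient reduced expression $\vs = (s_1, \ldots, s_l)$ for $v$, where $l = l(v) = |\gm|$ by Lemma~\ref{l.lengthofperm}, read off from the Young diagram $D_\gm$: assign to each box $(i,j) \in D_\gm$ the simple reflection $s_{d-i+j}$ (its content), and list the boxes in a reading order along the rows, following Ikeda--Naruse~\cite{IkNa:09}, so that the resulting word is reduced. This puts the positions $c = 1, \ldots, l$ of $\vs$ in bijection with the boxes of $D_\gm$. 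Under this bijection one checks, using the chosen reading order, that the root $r(c) = s_1 \cdots s_{c-1}(\ga_c)$ attached to the box $(i,j)$ equals $\gre_{v_{d+j}} - \gre_{v_{d+1-i}}$; this is consistent with Remark~\ref{r.tangent}, since the boxes of $D_\gm$ are precisely the inversions $(d+1-i,\,d+j)$ of $v$. Indeed $(i,j) \in D_\gm$ iff $j \le \gm_i = l_{d+1-i}(v)$ iff $v_{d+1-i} > v_{d+j}$, using \eqref{e.partitionfromcomps} together with $v_{d+1} < \cdots < v_n$. Consequently $e^{-r(c)} - 1 = e^{\gre_{v_{d+1-i}} - \gre_{v_{d+j}}} - 1$, which already matches the factors in the claimed formula.

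The heart of the argument is the bijection (Proposition~\ref{p.subsequences}) between $\ce_\gl(\gm)$ and $T_{w,\vs}$: a subset $C \subseteq D_\gm$ is sent to the subsequence $\vt$ of $\vs$ occupying the positions indexed by the boxes of $C$, and I would prove $H_\vt = H_w$ if and only if $C \in \ce_\gl(\gm)$. The base case is $C = D_\gl$ sitting in the upper-left corner of $D_\gm$: since $\gl = \gl_w$, the corresponding subsequence is a reduced word for $w$, so $H_\vt = H_w$. The inductive step analyzes one excitation moving a box $(i,j)$ to $(i+1,j+1)$. Both boxes carry the same content $k = d-i+j$, hence the same reflection $s_k$, while the three boxes $(i+1,j),(i,j+1),(i+1,j+1)$ required to lie in $D_\gm \setminus C$ carry reflections $s_{k-1}, s_{k+1}, s_k$. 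Using the $0$-Hecke relations together with the full commutativity of $w$ and $v$ from Section~\ref{s.nilHecke}, I would verify that a type~1 excitation does not change $H_\vt$, while a type~2 excitation also leaves $H_\vt$ unchanged because the two equal reflections $s_k$ collapse via $H_{s_k}^2 = H_{s_k}$. Conversely, I must show that every $\vt$ with $H_\vt = H_w$ arises this way; here Proposition~\ref{p.statistic2}, which produces in any non-reduced subword a repeated letter commuting with everything between its two occurrences, lets me peel off type~2 excitations and reduce to the reduced subword case.

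Finally I would substitute the bijection into Theorem~\ref{t.pullback}. Writing $T_{w,\vs} \ni \vt \leftrightarrow C \in \ce_\gl(\gm)$ and using the root identification of the first step,
\[
i_v^*[\co_{X^w}] = (-1)^{l(w)} \sum_{\vt \in T_{w,\vs}} \prod_{k} \bigl(e^{-r(i_k)} - 1\bigr) = (-1)^{l(w)} \sum_{C \in \ce_\gl(\gm)} \prod_{(i,j) \in C} \bigl(e^{\gre_{v_{d+1-i}} - \gre_{v_{d+j}}} - 1\bigr),
\]
which is the assertion.

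I expect the main obstacle to be the bijection in the second paragraph, and in particular its converse direction: showing that every subsequence $\vt$ with $H_\vt = H_w$ genuinely comes from an excited Young diagram, with the correct accounting of type~1 versus type~2 excitations. Controlling this requires the full-commutativity results of Section~\ref{s.nilHecke} and a careful induction that disentangles the diagonal moves (type~1) from the letter-repetitions (type~2). Checking that no extraneous subsequence appears, and that each excitation is well defined (the required empty boxes genuinely lie in $D_\gm$), is the delicate combinatorial core of the proof.
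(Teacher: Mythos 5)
Your proposal is correct and follows essentially the same route as the paper: the reading-word reduced expression for $v$ built from $T_\gm$, the identification of $r(c)$ with $\gre_{v_{d+j}}-\gre_{v_{d+1-i}}$, and the bijection between $T(w,\vs_v)$ and $\ce_\gl(\gm)$ (forward direction by commutation plus $H_{s_k}^2=H_{s_k}$, converse by Proposition \ref{p.statistic2} and the full-commutativity results of Section \ref{s.nilHecke}) are exactly the content of Proposition \ref{p.subsequences}, which the paper then substitutes into Theorem \ref{t.pullback}. The steps you defer (the explicit root computation and the induction on the two energies $e_1,e_2$ in the converse, i.e.\ the paper's Lemmas \ref{l.modifysubsequence}--\ref{l.containmenttwo}) are precisely where the paper spends its effort, and your plan identifies them correctly.
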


\begin{example}\label{ex.restriction_A}
Let $n=7$, $d=3$. For $v=\{4,6,7,1,2,3,5\},w=\{1,3,5,2,4,6,7\}\in W^{P_d}$,  $\gm=\gl_v=(4,4,3)$ and $\gl=\gl_w=(2,1)$. Thus $l(w)=|\gl_w|=3$. The set of excited Young diagrams $\ce_{\gl}(\gm)$ appears in Figure \ref{f.eyd_all}.
 For any $C\in \ce_{\gl}(\gm)$ and $(i,j)\in C$,
a simple method of finding the
indices $v_{d+j}$ and $v_{d+1-i}$ of Theorem \ref{t.ktheory_eyd} is to
label the rows and columns of $C$ with the entries of $v$ as indicated below. Then $v_{d+1-i}$ and $v_{d+j}$ are the row $i$ and column $j$ labels respectively. For example, for
\begin{equation*}
C\ =\
\begin{tikzpicture}[scale=.6,every node/.style={scale=1},
    baseline={([yshift=-.5ex]current bounding box.center)}]
    \foreach \pos in
    {{(1,3)},{(2,3)},{(2,1)}}
        {\draw[fill=blue!30] \pos +(-.5,-.5) rectangle ++(.5,.5);}
     \foreach \pos in
      { {(1,3)},{(2,3)},{(3,3)},{(4,3)},
        {(1,2)},{(2,2)},{(3,2)},{(4,2)},
        {(1,1)}, {(2,1)},{(3,1)}
      }
          {\draw \pos +(-.5,-.5) rectangle ++(.5,.5);}
    \draw  (0,1) node{$4$};  \draw  (0,2) node{$6$};  \draw  (0,3) node{$7$};
    \draw  (1,4) node{$1$}; \draw  (2,4) node{$2$}; \draw  (3,4) node{$3$};
    \draw  (4,4) node{$5$};
\end{tikzpicture}\ \in\ce_{\gl_w}(\gl_v),
\end{equation*}

\noindent we have
$
\prod_{(i,j)\in C}\left(e^{\gre_{v_{d+1-i}}-\gre_{v_{d+j}}}-1\right)
=\left(e^{\gre_7-\gre_1}-1\right)\left(e^{\gre_7-\gre_2}-1\right)
\left(e^{\gre_4-\gre_2}-1\right).
$

By Theorem \ref{t.ktheory_eyd},
{\allowdisplaybreaks
\begin{align*}
    i_v^*[\co_{X^w}] &=
    -\left(e^{\gre_7-\gre_1}-1\right)\left(e^{\gre_7-\gre_2}-1\right)
    \left(e^{\gre_6-\gre_1}-1\right)
    -\left(e^{\gre_7-\gre_1}-1\right)\left(e^{\gre_7-\gre_2}-1\right)
        \left(e^{\gre_4-\gre_2}-1\right)\\
    &- \left(e^{\gre_7-\gre_1}-1\right)\left(e^{\gre_6-\gre_1}-1\right)
    \left(e^{\gre_6-\gre_3}-1\right)
    -\left(e^{\gre_7-\gre_1}-1\right)\left(e^{\gre_4-\gre_2}-1\right)
        \left(e^{\gre_6-\gre_3}-1\right)\\
    &-\left(e^{\gre_6-\gre_2}-1\right)\left(e^{\gre_4-\gre_2}-1\right)
        \left(e^{\gre_6-\gre_3}-1\right)\\
    &-\left(e^{\gre_7-\gre_1}-1\right)\left(e^{\gre_7-\gre_2}-1\right)
    \left(e^{\gre_6-\gre_3}-1\right)\left(e^{\gre_6-\gre_1}-1\right)\\
    &-\left(e^{\gre_7-\gre_1}-1\right)\left(e^{\gre_7-\gre_2}-1\right)
    \left(e^{\gre_6-\gre_1}-1\right)\left(e^{\gre_4-\gre_2}-1\right)\\
    &-\left(e^{\gre_7-\gre_1}-1\right)\left(e^{\gre_7-\gre_2}-1\right)
    \left(e^{\gre_4-\gre_2}-1\right)\left(e^{\gre_6-\gre_3}-1\right)\\
    &-\left(e^{\gre_7-\gre_1}-1\right)\left(e^{\gre_6-\gre_1}-1\right)
    \left(e^{\gre_4-\gre_2}-1\right)\left(e^{\gre_6-\gre_3}-1\right)\\
    &-\left(e^{\gre_7-\gre_1}-1\right)\left(e^{\gre_6-\gre_2}-1\right)
    \left(e^{\gre_4-\gre_2}-1\right)\left(e^{\gre_6-\gre_3}-1\right)\\
    &-\left(e^{\gre_7-\gre_1}-1\right)\left(e^{\gre_7-\gre_2}-1\right)
    \left(e^{\gre_6-\gre_1}-1\right)\left(e^{\gre_6-\gre_3}-1\right)
    \left(e^{\gre_4-\gre_2}-1\right).
\end{align*}
}
\end{example}

Theorem \ref{t.ktheory_eyd} is a reformulation of Theorem \ref{t.pullback}, in which the indexing set $T(w,\vs)$ and integer $r(c)$ of the latter theorem are replaced by expressions involving excited Young diagrams. These replacements are given in Proposition \ref{p.subsequences}.

Let $w,v\in W^{P_d}$, and let $\gl=\gl_w$ and $\gm=\gl_v$ be the corresponding partitions.
 Fill in each box $(i,j)$ of $D_{\gm}$ with the simple reflection $s_{d+j-i}$, thus obtaining a reflection-valued tableau denoted by $T_{\gm}$.
Then
$v=s_{i_1}s_{i_2}\cdots s_{i_l}$,
where $s_{i_1},s_{i_2},\ldots,s_{i_l}$ are the entries of $T_{\gm}$ read from right to left, beginning with the bottom row, then the next row up, etc. Since $l=|D_{\gm}|=|\gm|=l(v)$, this decomposition is reduced. To any subset $C\subseteq D_{\gm}$, form the subsequence $\vs_C=(s_{j_1},\ldots,s_{j_q})$ of $(s_{i_1},\ldots,s_{i_l})$ whose entries lie in the set $C$ of boxes of $T_{\gm}$. If $C$ and $D$ are different subsets of $D_\gm$, then we regard $\vs_C$ and $\vs_D$ as different subsequences of $(s_{i_1},\ldots,s_{i_l})$, even if they have the same entries.

\begin{example}\label{ex.reduceddecomp} Let $n=8$, $d=4$.
 For $v=\{3,5,6,8,1,2,4,7\}\in W^{P_d}$, $\gm=\gl_v=(4,3,3,2)$,
\begin{align*}
T_{\gm}&=
\begin{tikzpicture}[
    scale=.4,
    every node/.style={scale=.8},
    baseline={([yshift=-.5ex]current bounding box.center)}]
         \foreach \pos / \label in
          { {(1,4)}/{s_4},{(2,4)}/{s_5},{(3,4)}/{s_6},{(4,4)}/{s_7},
            {(1,3)}/{s_3},{(2,3)}/{s_4},{(3,3)}/{s_5},
            {(1,2)}/{s_2},{(2,2)}/{s_3},{(3,2)}/{s_4},
            {(1,1)}/{s_1},{(2,1)}/{s_2}
          }
              {
                \draw \pos +(-.5,-.5) rectangle ++(.5,.5);
                \draw \pos node{$\label$};
              }
\end{tikzpicture}
\intertext{and $s_2s_1s_4s_3s_2s_5s_4s_3s_7s_6s_5s_4$ is a reduced decomposition for $v$. 
For}
C&=
\begin{tikzpicture}[scale=.4,every node/.style={scale=.8},baseline={([yshift=-.5ex]current bounding box.center)}]
        \foreach \pos in
        {{(1,2)},{(1,3)},{(2,3)},{(3,2)},{(4,4)}}
            {\draw[fill=green!90] \pos +(-.5,-.5) rectangle ++(.5,.5);}
         \foreach \pos in
          { {(1,4)},{(2,4)},{(3,4)},{(4,4)},
            {(1,3)},{(2,3)},{(3,3)},
            {(1,2)},{(2,2)},{(3,2)},
            {(1,1)},{(2,1)}
          }
              {
                \draw \pos +(-.5,-.5) rectangle ++(.5,.5);
              }
\end{tikzpicture}
\subset D_{\gm},
\end{align*}
we have $\vs_C=(s_4,s_2,s_4,s_3,s_7)$.
\end{example}

\begin{Prop}\label{p.subsequences}
Denote the reduced  decomposition  $(s_{i_1},\cdots, s_{i_l})$ for $v$ obtained above by $\vs_v$. By definition,
$T(w,\vs_v)=\{\vs_C\mid C\subseteq D_\gm, H_{\vs_C}=H_w\}$. We have
\begin{itemize}
    \item[(i)] $T(w,\vs_v)=\{\vs_C\mid C\in\ce_{\gl}(\gm)\}$.

    \item[(ii)] Let $(i,j)$ be the box of $T_{\gm}$ containing $s_{i_c}$. Then
    $r(c)=\gre_{v_{d+j}}-\gre_{v_{d+1-i}}$.
\end{itemize}
\end{Prop}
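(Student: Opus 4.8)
The plan is to treat the two parts separately, proving (ii) by tracking the partial products of the reduced word $\vs_v$, and (i) by relating the two excitation moves to relations in the $0$-Hecke algebra.

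For (ii), I would first record the box--inversion dictionary. Since $v\in W^{P_d}$ satisfies $v_{d+1}<\cdots<v_n$, the length computations of Section~\ref{ss.partitions_An} show that $(i,j)\in D_\gm$ if and only if $v_{d+1-i}>v_{d+j}$, in which case the associated positive root is $\gre_{v_{d+j}}-\gre_{v_{d+1-i}}$. By Remark \ref{r.tangent} the multiset $\{r(1),\dots,r(l)\}$ equals $v(\Phi^-)\cap\Phi^+$, which in type $A$ is exactly the set of these inversion roots, so (ii) reduces to matching each word position with the correct box. To pin down the matching I would write $r(c)=u_{c-1}(\ga_{i_c})=\gre_{u_{c-1}(p)}-\gre_{u_{c-1}(p+1)}$, where $u_{c-1}=s_{i_1}\cdots s_{i_{c-1}}$ and $p=d+j-i$ is the index of the reflection in box $(i,j)$; right multiplication by $s_{i_c}$ merely swaps the entries in positions $p,p+1$. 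The key step is an invariant, proved by induction on rows (read from the bottom up): just before row $i$ is read, $u$ carries $v_1,\dots,v_{d-i}$ in the first $d-i$ positions and the complementary values in increasing order thereafter. Using \eqref{e.lambdav} one checks that $v_{d+1-i}$ then sits in position $d+\gm_i-i+1$, so reading row $i$ from right to left bubbles $v_{d+1-i}$ leftward into position $d+1-i$; at the moment box $(i,j)$ is processed, positions $d+j-i$ and $d+j-i+1$ hold $v_{d+j}$ and $v_{d+1-i}$ respectively, giving $r(c)=\gre_{v_{d+j}}-\gre_{v_{d+1-i}}$.

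For (i), since $T(w,\vs_v)=\{\vs_C\mid C\subseteq D_\gm,\ H_{\vs_C}=H_w\}$ and $C\mapsto\vs_C$ is injective, it suffices to prove $\{C\subseteq D_\gm\mid H_{\vs_C}=H_w\}=\ce_\gl(\gm)$. For the inclusion $\supseteq$ I would verify that the two excitation moves preserve $H_{\vs_C}$. The crucial observation is that boxes $(i,j)$ and $(i+1,j+1)$ carry the same reflection $s_m$ ($m=d+j-i$), while the two boxes forbidden by the definition of an excitation, namely $(i,j+1)$ and $(i+1,j)$, carry precisely the neighbouring reflections $s_{m\pm1}$ that fail to commute with $s_m$; every other $C$-box read between the two relevant slots commutes with $s_m$. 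Hence in a type $1$ excitation the reflection $s_m$ slides freely from the later slot to the earlier one, and in a type $2$ excitation the new $s_m$ slides next to the old one and is absorbed via $H_{s_m}^2=H_{s_m}$; in both cases $H_{\vs_C}$ is unchanged. Starting from $C=D_\gl$, whose subword is a reduced word for $w$ by the same reading argument applied to $T_\gl$, this gives $H_{\vs_C}=H_w$ for every $C\in\ce_\gl(\gm)$.

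For the reverse inclusion I would induct on $|C|$. If $|C|=l(w)$, then $\vs_C$ is reduced for $w$ by Lemma \ref{l.minlength}, so $C$ is a reduced excited Young diagram by the equivariant-cohomology correspondence of Ikeda--Naruse \cite{IkNa:09}. If $|C|>l(w)$, then since $w$ is fully commutative (being a Grassmannian permutation), Proposition \ref{p.statistic2} yields two positions of $\vs_C$ carrying the same reflection $s_m$ with all intermediate reflections commuting with $s_m$; these correspond to two $C$-boxes on the diagonal $j-i=m-d$ with no $C$-box on the adjacent diagonals read between them. I expect the main obstacle to be geometric: a priori these two boxes need not be diagonally adjacent. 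I would resolve this with reverse type $1$ excitations, sliding the lower box up the diagonal one step at a time---the emptiness of the intermediate adjacent-diagonal boxes, forced by the commuting condition, is exactly the precondition for each slide---until the boxes occupy $(i,j)$ and $(i+1,j+1)$ with $(i,j+1),(i+1,j)\notin C$. Removing $(i+1,j+1)$ is then a reverse type $2$ excitation producing $\tilde C$ with $|\tilde C|=|C|-1$ and $H_{\vs_{\tilde C}}=H_w$; by induction $\tilde C\in\ce_\gl(\gm)$, and as $C$ is a type $2$ excitation of $\tilde C$ we get $C\in\ce_\gl(\gm)$. Checking that the sliding preserves both $H_{\vs_C}$ and the commuting configuration, and that it terminates at adjacency, is the technical heart of the argument.
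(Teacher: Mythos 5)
Your proposal is correct in substance, and part (ii) follows a genuinely different route from the paper. The paper proves (ii) by a direct root computation: it observes that in $r(c)=s_{i_1}\cdots s_{i_{c-1}}(\ga_p)$, $p=d+j-i$, only the reflections in row $i$ to the right of $(i,j)$ and in column $j$ below $(i,j)$ matter (all others fix the relevant root), and then evaluates $s_y\cdots s_{p-1}\,s_x\cdots s_{p+1}(\gre_p-\gre_{p+1})$ step by step to get $\gre_{v_{d+j}}-\gre_{v_{d+1-i}}$. You instead track the one-line notation of the partial products via the invariant that, before row $i$ is read, $u=(v_1,\ldots,v_{d-i},\text{complement in increasing order})$; this is correct --- $v_{d+1-i}$ is the $(\gm_i+1)$-st smallest element of the complement because $v_1<\cdots<v_d$ and $\gm_i=v_{d+1-i}-(d+1-i)$, and $v_{d+j}$ is its $j$-th smallest element by $(\gl_v)^t_j=(d+j)-v_{d+j}$ --- and it delivers the formula for all boxes of a row at once, arguably more transparently than the paper's calculation (modulo the harmless bookkeeping for empty rows, where the invariant is self-correcting). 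For (i), your plan is structurally the paper's: excitations preserve $H_{\vs_C}$ by commutation together with $H_s^2=H_s$ (the paper's Lemma \ref{l.modifysubsequence} and Lemma \ref{l.containmentone}), and the converse is an induction powered by Proposition \ref{p.statistic2}. The differences are organizational: the paper proves the reduced base case itself (Cases 1--2 of Lemma \ref{l.containmenttwo}, an induction on $e_1(C)$ using Lemma \ref{l.reflectionsametimes}), where you cite \cite{IkNa:09} --- legitimate, since the paper credits that reference with the nil-Coxeter/reduced version, but less self-contained; and in the inductive step the paper deletes the box $\ga'$ in a single long-range move justified by Lemma \ref{l.modifysubsequence}, whereas you first slide to diagonal adjacency and then delete, which is equivalent but more laborious.

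One step of your induction needs repair. Proposition \ref{p.statistic2}, as stated, only guarantees that the reflections strictly between the two equal occurrences of $s_m$ commute with $s_m$; since $s_m$ commutes with itself, this does not exclude further occurrences of $s_m$ --- that is, further $C$-boxes on the same diagonal --- inside the window, and any such box obstructs your slides, because the target box of each reverse type~1 excitation must be empty. The fix is one line: among the occurrences of $s_m$ in the window, take two consecutive ones; the commuting hypothesis passes to the sub-window, and the stretch of diagonal between them is then genuinely clear, so your sliding and final deletion go through. (The paper's own Case 3 asserts the diagonal between $\ga'$ and $\ga$ is clear, which really uses the stronger conclusion implicit in the proof of Proposition \ref{p.statistic2}, where the first index is chosen maximal; the same point is elided there, so this is a flaw you inherited rather than introduced --- but your write-up, having correctly quoted only the weaker statement, is the one whose next step visibly fails without the fix.)
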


The reduced decomposition $\vs_v$ can be deduced from a more general method of producing reduced decompositions of arbitrary permutations (see
\cite[Remark 2.1.9]{Man:01}). The decomposition here also appears in Ikeda and Naruse \cite{IkNa:09}.
Proposition \ref{p.subsequences}(ii) is due to Ikeda and Naruse \cite{IkNa:09}. A version of
 Proposition \ref{p.subsequences}(i) which involves the nil-Coxeter alegra and (what we call) reduced excited Young diagrams is also proved in \cite{IkNa:09}.

For (co)Grassmannian permutations $w$ and $v$, the notion of a {\it pipe dream for $w$ on $D(v)$}, introduced by Woo and Yong \cite{WoYo:12} (see also \cite{LiYo:12}), is closely related to that of an excited Young diagram of $D_\gl$ in $D_\gm$. In our language, a {\it pipe dream for $w$ on $D(v)$} is equal to a subset $C$ of $D_\gm$ such that $H_{\vs_C}=H_w$. By Proposition \ref{p.subsequences}(i), the set of all such pipe dreams is equal to $\ce_\gl(\gm)$. General pipe dreams have been studied by \cite{BeBi:93}, \cite{FoKi:96}, \cite{KnMi:04}, \cite{KnMi:05}.  The excited Young diagrams introduced in \cite{IkNa:11} are also related to those of this paper (see Section \ref{ss.previous}).

 \subsubsection{Proof of Proposition \ref{p.subsequences}(i)}\label{ss.proof_p.subsequences}

Let $C$ be a subset of $D_\gm$.
Proposition \ref{p.subsequences}(i) is equivalent to
\begin{equation}\label{e.subsequences}
    H_{\vs_C}=H_w\text{ if and only if }C\in\ce_\gl(\gm).
\end{equation}
The proof of the reverse implication is fairly straightforward (see Lemma \ref{l.containmentone}). The proof of the forward implication (Lemma \ref{l.containmenttwo}) is by induction on $e_1(C)$ and $e_2(C)$, defined below.
Lemma \ref{l.hecketoeyd} helps us to translate from excited Young diagrams to 0-Hecke algebras.

\begin{Def}\label{d.energy}
If $H_{\vs_C}=H_w$, then define
\begin{enumerate}
    \item $|C|=$ number of boxes of $C$

    \item $e_1(C)=(1/2)(\sum_{(i,j)\in C}(i+j)-\sum_{(i,j)\in D_\gl}(i+j))$

    \item $e_2(C)=|C|-|D_{\gl}|=|C|-|\gl|$
\end{enumerate}
We call $e_1(C)$ and $e_2(C)$ the \textbf{type 1 energy} and \textbf{type 2 energy} of $C$ respectively.
\end{Def}

\begin{Lem}\label{l.hecketoeyd} If $H_{\vs_C}=H_w$, then
\begin{enumerate}
    \item $|C|=l(\vs_C)$.

    \item $e_2(C)=e(\vs_C)$.

    \item $e_2(C)\geq 0$, and $e_2(C)=0$ if and only if $\vs_C$ is a reduced expression for $w$.
\end{enumerate}
\end{Lem}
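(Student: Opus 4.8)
The plan is to deduce all three parts directly from the combinatorial setup relating $\vs_C$ to $C$, together with Lemma \ref{l.minlength} and Lemma \ref{l.lengthofperm}; there is no substantive calculation, only careful bookkeeping.

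First I would establish (i). By construction, the reduced decomposition $\vs_v = (s_{i_1}, \ldots, s_{i_l})$ has exactly one entry $s_{i_c}$ for each box of $D_\gm$, so the positions $1, \ldots, l$ of $\vs_v$ are in bijection with the boxes of $D_\gm$. The subsequence $\vs_C$ is formed by retaining precisely those positions whose boxes lie in $C$. Hence the number of entries of $\vs_C$, which is by definition $l(\vs_C)$, equals the number of boxes of $C$, namely $|C|$; this gives (i). For (ii), I would argue formally from (i) and the earlier results: by Definition \ref{d.tvj}, $e(\vs_C) = l(\vs_C) - l(w)$, while (i) gives $l(\vs_C) = |C|$ and Lemma \ref{l.lengthofperm} (applied to $w$, with $\gl = \gl_w$) gives $l(w) = |\gl|$. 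Therefore $e(\vs_C) = |C| - |\gl| = e_2(C)$, which is exactly the defining formula for $e_2(C)$.

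Finally, part (iii) follows from Lemma \ref{l.minlength} once (ii) is in hand. Since by hypothesis $H_{\vs_C} = H_w$, the first assertion of Lemma \ref{l.minlength} gives $l(\vs_C) \ge l(w)$, so by (ii) we get $e_2(C) = e(\vs_C) = l(\vs_C) - l(w) \ge 0$. For the equality case, observe that $e_2(C) = 0$ is equivalent to $l(\vs_C) = l(w)$; the second assertion of Lemma \ref{l.minlength} then shows that this equality forces $\vs_C$ to be a reduced expression for $w$. Conversely, if $\vs_C$ is a reduced expression for $w$, then $l(\vs_C) = l(w)$ by the definition of reduced expression, so $e_2(C) = 0$.

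The statement is elementary, so there is no real obstacle. The one point requiring care is the distinction between the identity $H_{\vs_C} = H_w$ in the $0$-Hecke algebra and the stronger claim that $\vs_C$ is an honest reduced word for $w$; it is precisely Lemma \ref{l.minlength} that bridges these two, so part (iii) genuinely depends on that lemma and cannot be obtained from the definitions alone.
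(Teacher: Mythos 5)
Your proof is correct and follows essentially the same route as the paper's: (i) from the box-position bijection defining $\vs_C$, (ii) by combining (i) with $l(w)=|\gl|$ (Lemma \ref{l.lengthofperm}) and the definition of $e(\vs_C)$, and (iii) as an immediate consequence of Lemma \ref{l.minlength}. The only difference is that you spell out the steps (in particular the converse direction of the equivalence in (iii)) that the paper leaves implicit.
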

\begin{proof}
(i) Clear from the definition of $\vs_C$.

\noindent (ii) $e_2(C)=|C|-|\gl|=l(\vs_C)-l(w)=e(\vs_C)$, where the last equality is the definition of $e(\vs_C)$. 

\noindent (iii) By (ii), this can be restated:
If $H_{\vs_C}=H_w$, then $l(\vs_C)-l(w)\geq 0$, and $l(\vs_C)-l(w)=0$ if and only if $\vs_C$ is a reduced expression for $w$. This follows immediately from Lemma \ref{l.minlength}.
\end{proof}

\begin{Rem}
If $C\in\ce_\gl(\gm)$, then one can give interpretations of $e_1(C)$ and $e_2(C)$ in terms of excitations; these interpretations are not required for the sequel. An excitation of type 1 applied to $C$  does not alter $|C|$, whereas an excitation of type 2 increases it by 1.  Thus $e_2(C)$ is the number of excitations of type 2 which must be applied in order to obtain $C$ from $D_{\gl}$.  In particular, $e_2(C)\geq 0$, and $e_2(C)=0$ if and only if $C$ is reduced. In general, the number of type 1 excitations which must be applied in order to obtain $C$ from $D_\gl$ is not defined, since it may be possible to obtain $C$ by two different sequences of excitations which have a different number of type 1 excitations. However, if $C$ is reduced, then this number is defined and is given by $e_1(C)$.
\end{Rem}

\textbf{Diagonal $k$} of $D_\gl$ is defined to be the set of boxes $(i,j)$ such that $j-i=k$.
We say that box $(i,j)$ \textbf{lies on diagonal $k$} if $j-i=k$.
Impose the following total order on the boxes of $D_\gm$: $(i,j)<(k,l)$ if $i>k$ or if $i=k$ and $j>l$.

\begin{Lem}\label{l.modifysubsequence}
Let $\ga'<\ga$ be two boxes of $D_\gm$ which lie on the same diagonal $k$. Suppose that $\ga\in C$, $\ga'\not\in C$, and that $C$ contains no box $\gb$, $\ga'<\gb<\ga$, which lies on diagonal $k-1$, $k$, or $k+1$. 
Then $H_{\vs_{C\,\setminus \ga\,\cup\,\ga'}}=H_{\vs_{C \,\cup\,\ga'}}=H_{\vs_C}$.
\end{Lem}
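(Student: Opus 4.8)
The plan is to reduce the statement to a short manipulation inside the $0$-Hecke algebra, using two structural features of the filling $T_\gm$. First, the entry of box $(i,j)$ is $s_{d+j-i}$, so the generator attached to a box depends only on the diagonal it sits on: every box on diagonal $k$ carries the same reflection $s:=s_{d+k}$. Second, the total order on boxes of $D_\gm$ (lower rows first, and within a row from right to left) is exactly the reading order used to build $\vs_v$, and hence $\vs_C$: the letters of $\vs_C$ appear in increasing order of their boxes. The relevant commutation fact is that in $S_n$ the reflections $s_a$ and $s_b$ commute precisely when $|a-b|\ge 2$; translated to diagonals, the reflection $s_{d+k}$ commutes with the reflection of any box lying on a diagonal $k'$ with $|k'-k|\ge 2$, that is, on any diagonal other than $k-1$, $k$, or $k+1$.

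With this dictionary in place, I would split the word $\vs_C$ at the slots of $\ga'$ and $\ga$. Let $H_P$ be the product (in the $0$-Hecke algebra) of the letters of $\vs_C$ preceding the slot of $\ga'$, let $H_M$ be the product of the letters strictly between $\ga'$ and $\ga$, and let $H_N$ be the product of the letters following $\ga$; since $\ga\in C$ carries $s$ and $\ga'\notin C$, we have $H_{\vs_C}=H_P H_M H_s H_N$. The hypothesis that $C$ contains no box $\gb$ with $\ga'<\gb<\ga$ on diagonals $k-1$, $k$, or $k+1$ says exactly that every letter occurring in $M$ commutes with $s$, so $H_s H_M = H_M H_s$. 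Now I compute the two target words directly. Inserting $\ga'$ gives $H_{\vs_{C\cup\ga'}} = H_P H_s H_M H_s H_N = H_P H_M H_s H_s H_N = H_P H_M H_s H_N = H_{\vs_C}$, using first the commutation and then the idempotent relation $H_s H_s = H_s$. Replacing $\ga$ by $\ga'$ gives $H_{\vs_{C\setminus\ga\cup\ga'}} = H_P H_s H_M H_N = H_P H_M H_s H_N = H_{\vs_C}$, using only the commutation. This yields both asserted equalities.

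I do not expect a serious obstacle here; the content is almost entirely bookkeeping, and the only points requiring care are the two verifications that feed the computation. The first is confirming that the total order coincides with the reading order defining $\vs_C$, so that ``strictly between $\ga'$ and $\ga$'' in the order really does describe a contiguous block $M$ of the word between the two slots. The second is justifying the relation $H_s H_t = H_t H_s$ for commuting generators $t$ occurring in $M$: this follows from the defining relations of the $0$-Hecke algebra, since for $s,t\in S$ with $st=ts$ one has $l(st)>l(t)$ and hence $H_s H_t = H_{st} = H_{ts} = H_t H_s$, after which $H_s H_M = H_M H_s$ follows by iterating over the letters of $M$ and using associativity. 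Everything else is the single application of $H_s^2 = H_s$ that absorbs the duplicated generator.
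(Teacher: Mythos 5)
Your proof is correct and takes essentially the same approach as the paper's: both arguments commute the diagonal-$k$ reflection past the letters lying strictly between $\ga'$ and $\ga$ (which the hypothesis forces onto diagonals at distance at least $2$, hence commuting), and handle the insertion case by absorbing the duplicated generator via $H_s^2 = H_s$. The only difference is one of exposition --- you make explicit the splitting $H_P H_M H_s H_N$ and the idempotent step that the paper compresses into ``one proves $H_{\vs_{C\,\cup\,\ga'}}=H_{\vs_C}$ similarly.''
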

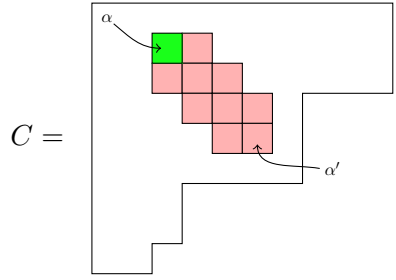
\begin{figure}[h!]
\begin{equation*}
    C=\ \
    \begin{tikzpicture}[xscale=.4,yscale=.4,
    baseline={([yshift=-.5ex]current bounding box.center)},
    every node/.style={scale=.6}]
        \foreach \pos in
        {{(4,8)},{(3,7)},{(4,7)},{(5,7)},{(4,6)},{(5,6)},(6,6),(5,5),(6,5)}
            {\draw[fill=red!30] \pos +(-.5,-.5) rectangle ++(.5,.5);}
        \draw[fill=green!90] (3,8) +(-.5,-.5) rectangle ++(.5,.5);
          \draw (0.5,0.5) -- ++(2,0) -- ++(0,1) -- ++(1,0) -- ++(0,2) -- ++(4,0) -- ++(0,3) -- ++(3,0) -- ++(0,3) -- ++(-10,0)
        -- cycle;
        \node[inner sep=1pt] (src_one) at (1,9) {$\ga$};
        \node (dst_one) at (3,8) {};
        \draw[->] (src_one) to [out=-40,in=180] (dst_one);
        \node[inner sep=3pt] (src_two) at (8.5,4) {$\ga'$};
        \node (dst_two) at (6,5) {};
        \draw[->] (src_two.west) to [out=170,in=-90] (dst_two);
      \end{tikzpicture}
\end{equation*}
\caption{\label{f.genexcitation}$C$ contains box $\ga$ but does not contain any of the red boxes.}
\end{figure}
\begin{proof}
Boxes $\ga'$ and $\ga$ of $T_\gm$ contain the same reflection, namely $s_{k+d}$; let $r=k+d$. To create $\vs_{C\,\setminus \ga\,\cup\,\ga'}$ from $\vs_C$, one moves the reflection $s_r$ of $\vs_C$ which lies in box $\ga$ of $C$ past all of the reflections of $\vs_C$ which lie in boxes $\gb$ of $C$, $\ga'<\gb<\ga$. Since all such reflections lie outside of diagonals $k-1$, $k$, and $k+1$, $s_r$ commutes with them. Thus $H_{\vs_{C\,\setminus \ga\,\cup\,\ga'}}=H_{\vs_C}$. One proves $H_{\vs_{C \,\cup\,\ga'}}=H_{\vs_C}$ similarly.
\end{proof}

\begin{Lem}\label{l.containmentone}
If $C\in\ce_{\gl}(\gm)$, then $H_{\vs_C}=H_w$.
\end{Lem}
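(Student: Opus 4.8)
The plan is to induct on the number of excitations used to obtain $C$ from $D_\gl$, showing at each step that $H_{\vs_C}$ is unchanged and equal to $H_w$; Lemma \ref{l.modifysubsequence} will do all the work in the inductive step.

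For the base case $C=D_\gl$, I note that each box $(i,j)$ of $D_\gl\subseteq D_\gm$ carries the same entry $s_{d+j-i}$ in $T_\gm$ that it would in the analogous reflection-valued tableau built from $\gl$, and that the reading order defining $\vs_v$ restricts on $D_\gl$ to precisely the reading order producing a reduced expression for $w$. Hence $\vs_{D_\gl}$ is a reduced expression for $w$, and the $0$-Hecke relations $H_sH_u=H_{su}$ (valid when $l(su)>l(u)$) give $H_{\vs_{D_\gl}}=H_w$.

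For the inductive step, I suppose $H_{\vs_C}=H_w$ and that $C'$ is obtained from $C$ by a single excitation at $(i,j)$, so $(i,j)\in C$ while $(i+1,j),(i,j+1),(i+1,j+1)\in D_\gm\setminus C$. I set $\ga=(i,j)$ and $\ga'=(i+1,j+1)$, which lie on the common diagonal $k=j-i$; since $\ga'$ sits in the lower row, $\ga'<\ga$ in the total order. To apply Lemma \ref{l.modifysubsequence} I must check that $C$ contains no box $\gb$ with $\ga'<\gb<\ga$ lying on diagonal $k-1$, $k$, or $k+1$. The boxes strictly between $\ga'$ and $\ga$ are exactly those in row $i$ with column $>j$ together with those in row $i+1$ with column $\le j$; a direct check of the diagonal index $q-p$ shows that the only such boxes landing on diagonals $k-1$, $k$, or $k+1$ are $(i,j+1)$ (diagonal $k+1$) and $(i+1,j)$ (diagonal $k-1$). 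Both are excluded from $C$ by the excitation conditions, so Lemma \ref{l.modifysubsequence} yields
$$
H_{\vs_{C\,\setminus\ga\,\cup\,\ga'}}=H_{\vs_{C\,\cup\,\ga'}}=H_{\vs_C}.
$$
A type $1$ excitation produces $C'=C\setminus\ga\cup\ga'$ and a type $2$ excitation produces $C'=C\cup\ga'$, so in either case $H_{\vs_{C'}}=H_{\vs_C}=H_w$, closing the induction.

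The main obstacle is entirely contained in the inductive step: the diagonal bookkeeping needed to identify the boxes strictly between $\ga'$ and $\ga$ in the total order and to confirm that the only possible obstructions on the three relevant diagonals are precisely the two boxes $(i,j+1)$ and $(i+1,j)$ forbidden by the definition of an excitation. Once this is verified, the result follows immediately from Lemma \ref{l.modifysubsequence} and the $0$-Hecke relations.
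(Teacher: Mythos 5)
Your proof is correct and follows essentially the same route as the paper: the paper likewise reduces to showing that a single excitation leaves $H_{\vs_C}$ unchanged and cites Lemma \ref{l.modifysubsequence}, with the base case $H_{\vs_{D_\gl}}=H_w$ taken from the setup of $\vs_v$. The only difference is that you spell out explicitly the verification that the excitation hypotheses imply the hypotheses of Lemma \ref{l.modifysubsequence} (the diagonal bookkeeping identifying $(i,j+1)$ and $(i+1,j)$ as the only possible obstructions), which the paper leaves implicit.
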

\begin{proof}
 Since $C$ is obtained by applying a sequence of excitations to $D_{\gl}$, and $H_w=H_{\vs_{D_{\gl}}}$,  we need only prove that applying a single excitation to a subset $D$ of $D_\gm$ does not alter $H_{\vs_D}$. This is a special case of Lemma \ref{l.modifysubsequence}.
\end{proof}

\begin{Lem}\label{l.containmenttwo} If  $H_{\vs_C}=H_w$, then $C\in \ce_{\gl}(\gm)$.
\end{Lem}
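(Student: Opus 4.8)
The plan is to run a descending induction that peels off one excitation at a time. Since the inverse of a single type-1 excitation $C\exco C'$ (slide a box back up its diagonal) and of a single type-2 excitation $C\exct C''$ (delete the added box) are well defined, it suffices to show: whenever $C\neq D_\gl$ and $H_{\vs_C}=H_w$, there is a subset $C'$ obtained from $C$ by reversing one type-1 or type-2 excitation, with $H_{\vs_{C'}}=H_w$ and $\Sigma(C')<\Sigma(C)$, where $\Sigma(C)=\sum_{(i,j)\in C}(i+j)$. Both reverse moves strictly lower the nonnegative integer $\Sigma$, so iterating terminates, necessarily at $D_\gl$; running the excitations forward then exhibits $C\in\ce_\gl(\gm)$. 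Reversing a type-1 excitation leaves $e_2(C)$ fixed while reversing a type-2 excitation drops it by one, so this single induction refines into the nested induction on $e_2(C)$ and $e_1(C)$ suggested by Lemma \ref{l.hecketoeyd}. The engine throughout is Lemma \ref{l.modifysubsequence}: sliding or copying a box up its diagonal, past only boxes lying on non-adjacent diagonals, does not change $H_{\vs_C}$.

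I would split on $e_2(C)$. If $e_2(C)>0$ then by Lemma \ref{l.hecketoeyd}(iii) the word $\vs_C$ is not reduced, so $l(\vs_C)>l(w)$; since $w$ is a Grassmannian permutation it is fully commutative, so Proposition \ref{p.statistic2} produces two boxes of $C$ on a common diagonal $k$ such that every box of $C$ lying between them in the reading order forming $\vs_C$ (equivalently the total order on boxes) sits on a diagonal other than $k-1,k,k+1$. These two boxes carry the same reflection $s_{d+k}$, which commutes with everything between them, so the relation $H_sH_s=H_s$ lets me delete one of them to form $C'$ with $H_{\vs_{C'}}=H_w$ and $e_2(C')=e_2(C)-1$; I would then verify that $C$ is recovered from $C'$ by a type-2 excitation, after first sliding any intervening boxes down by type-1 moves if needed. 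If instead $e_2(C)=0$ then $\vs_C$ is a reduced word for $w$, so by Lemma \ref{l.reflectionsametimes} the diagrams $C$ and $D_\gl$ have equally many boxes on each diagonal. On diagonal $k$ the boxes of $D_\gl$ occupy the top rows $1,2,\ldots,m_k$, which minimize the sum of the occupied row indices; comparing diagonal by diagonal gives $e_1(C)\ge 0$, with equality exactly when $C=D_\gl$. This is the base case. When $e_1(C)>0$ some diagonal fails to be top-aligned, and taking the first discrepancy on such a diagonal yields a box whose upward diagonal neighbour is vacant, which I would raise one step by a reverse type-1 excitation, lowering $\Sigma$.

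The main obstacle is identical in both cases: once a box has been singled out to be raised (or a redundant box to be deleted), one must know the move is an honest single excitation, i.e.\ that the two side cells $(a-1,b)$ and $(a,b-1)$ are empty so the emptiness hypotheses of a type-1 or type-2 excitation hold. These cells lie on the adjacent diagonals $k+1$ and $k-1$ and may be occupied, blocking a naive one-step move. The way through is to exploit the flexibility of Lemma \ref{l.modifysubsequence}, which allows the box to travel up its diagonal past any boxes on non-adjacent diagonals, and to select the box extremally, namely the lowest and rightmost offending box on the diagonal in question, so that an occupied side cell either forces a strictly smaller offending box (contradicting minimality) or can be cleared first by a preliminary type-1 slide. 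The delicate bookkeeping that the full argument must supply is the verification that each of these Lemma \ref{l.modifysubsequence} moves is genuinely realized by a forward sequence of excitations from $C'$, so that $C\in\ce_\gl(\gm)$ indeed follows by induction.
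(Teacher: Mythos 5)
Your induction scheme is the same as the paper's---a nested induction on $e_2(C)$ and then $e_1(C)$, with Lemma \ref{l.modifysubsequence} as the engine---and your treatment of the case $e_2(C)>0$ is essentially the paper's argument: Proposition \ref{p.statistic2} produces two boxes of $C$ on a common diagonal $k$ with no boxes of $C$ between them on diagonals $k-1,k,k+1$; one of them is deleted, and induction applies. One correction there: you must delete the \emph{lower} of the two boxes, as the paper does. Excitations only create or move boxes downward along a diagonal (the row index of the topmost box on each diagonal can never decrease under excitations), so if you delete the upper box, $C$ can in general not be recovered from $C'$ by any sequence of excitations, and the inductive step collapses.

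The genuine gap is in the case $e_2(C)=0$, $e_1(C)>0$. You need a box of $C$ that can be raised one step along its diagonal, which requires the target cell \emph{and} both side cells $(a-1,b)$, $(a,b-1)$ to be vacant; your proposed justification---take the lowest offending box, and claim an occupied side cell ``forces a smaller offending box or can be cleared by a preliminary slide''---is not a proof, and no argument of this kind can succeed using only what you have established at that point, namely (via Lemma \ref{l.reflectionsametimes}) that $C$ and $D_\gl$ have equally many boxes on each diagonal. Concretely, take $\gl=(2)$, $\gm=(2,2)$, $C=\{(1,2),(2,2)\}$: the diagonal counts match those of $D_\gl=\{(1,1),(1,2)\}$, $e_2(C)=0$, $e_1(C)=1$, yet the unique out-of-place box $(2,2)$ is side-blocked by $(1,2)$, and $(1,2)$ cannot be cleared by a preliminary type-1 slide (that slide is itself blocked by $(2,2)$, and would raise $\Sigma(C)$ anyway). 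This $C$ of course satisfies $H_{\vs_C}=H_{s_d s_{d+1}}\neq H_{s_{d+1}s_d}=H_w$, so it is no counterexample to the lemma---but excluding it requires using the Hecke hypothesis beyond diagonal counts, which your sketch never does in this case. That is exactly what the paper's Case 2 supplies: taking $\ga$ the maximal box of $D_\gl\setminus C$ and $\ga'$ the maximal box of $C$ below it on the same diagonal $k$, the full-commutativity invariant $\vs_C(s,t)=\vs_{D_\gl}(s,t)$ from Section \ref{s.nilHecke}, applied to the reflection of diagonal $k$ paired with those of diagonals $k\pm 1$, shows the entire corridor between $\ga'$ and $\ga$ is free of boxes of $C$ on diagonals $k-1,k,k+1$. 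That emptiness simultaneously legitimizes the (possibly multi-step) Hecke-preserving move $\ga'\mapsto\ga$ via Lemma \ref{l.modifysubsequence}, strictly decreases $e_1$, and guarantees that $C$ is recovered from the new diagram by honest type-1 excitations. Without this two-letter subword invariant for fully commutative elements, the key existence claim in your argument is unsupported.
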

\begin{proof}
We prove this lemma for the following three cases, which increase in generality: first when $e_2(C)=0$ and $e_1(C)=0$, then when $e_2(C)=0$ and $e_1(C)$ is arbitrary, and finally with no restrictions on $C$. Each serves as the base case of an inductive proof for the subsequent more general case.
\vspace{1em}

\noindent \textbf{Case 1}. $e_2(C)=0$ and $e_1(C)=0$.

Since $e_2(C)=0$, $\vs_C$ is a reduced expression for $w$ by Lemma \ref{l.hecketoeyd}(iii). Thus, by Lemma \ref{l.reflectionsametimes}, any reflection must occur the same number of times in $\vs_C$ as in $\vs_{D_{\gl}}$. Thus on each diagonal, $C$ must have the same number of boxes as $D_{\gl}$ does. Since $e_1(C)=0$, $C=D_{\gl}$.

\vspace{2em}

\noindent \textbf{Case 2}. $e_2(C)=0$ and $e_1(C)$ arbitrary.

The proof is by induction on $e_1(C)$. If $e_1(C)=0$, then we are done, by Case 1. Assume that $e_1(C)>0$. Then $C\neq D_{\gl}$. Since $e_2(C)=0$, $\vs_C$ is a reduced expression for $w$ and therefore $C$ must have the same number of boxes as $D_\gl$ does. Thus there must be some box which is contained in $D_{\gl}$ but not in $C$. Let $\ga$ be the maximal such, and let $k$ be the diagonal of $\ga$. As in Case 1, on each diagonal, $C$ must have the same number of boxes as $D_{\gl}$ does. This implies that $C$ must contain a box lying on diagonal $k$ and in a lower row than $\ga$; let $\ga'$ be the maximal such. By maximality of $\ga'$, $C$ contains no box $\gb$, $\ga'<\gb<\ga$, which lies on diagonal $k$. Since $\vs_{C}(s_k,s_{k-1})=\vs_{D_{\gl}}(s_k,s_{k-1})$ (see Section \ref{s.nilHecke}), $C$ contains no box $\gb$, $\ga'<\gb<\ga$, which lies on diagonal $k-1$. Since $\vs_{C}(s_k,s_{k+1})=\vs_{D_{\gl}}(s_k,s_{k+1})$, $C$ contains no box $\gb$, $\ga'<\gb<\ga$, which lies on diagonal $k+1$. 
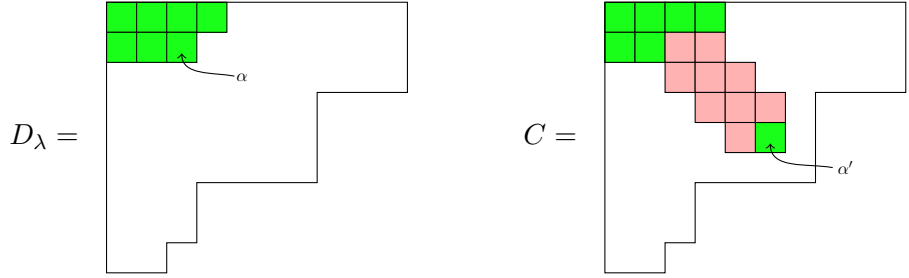
\begin{figure}[h!]
\begin{equation*}
    D_{\gl}=\ \
    \begin{tikzpicture}[xscale=.4,yscale=.4,
    baseline={([yshift=-.5ex]current bounding box.center)},
    every node/.style={scale=.6}]
        \foreach \pos in
        {{(1,9)},{(2,9)},{(3,9)},{(4,9)},
        (1,8),(2,8),(3,8)}
            {\draw[fill=green!90] \pos +(-.5,-.5) rectangle ++(.5,.5);}
          \draw (0.5,0.5) -- ++(2,0) -- ++(0,1) -- ++(1,0) -- ++(0,2) -- ++(4,0) -- ++(0,3) -- ++(3,0) -- ++(0,3) -- ++(-10,0)
        -- cycle;
        \node[inner sep=1pt] (src_one) at (5,7) {$\ga$};
        \node (dst_one) at (3,8) {};
        \draw[->] (src_one) to [out=170,in=-90] (dst_one);
    \end{tikzpicture}
\qquad\qquad
    C=\ \
    \begin{tikzpicture}[xscale=.4,yscale=.4,
    baseline={([yshift=-.5ex]current bounding box.center)},
    every node/.style={scale=.6}]
        \foreach \pos in
        {{(1,9)},{(2,9)},{(3,9)},{(4,9)},
        (1,8),(2,8),(6,5)}
            {\draw[fill=green!90] \pos +(-.5,-.5) rectangle ++(.5,.5);}
        \foreach \pos in
        {{(3,8)},{(4,8)},{(3,7)},{(4,7)},{(5,7)},{(4,6)},{(5,6)},(6,6),(5,5)}
            {\draw[fill=red!30] \pos +(-.5,-.5) rectangle ++(.5,.5);}
          \draw (0.5,0.5) -- ++(2,0) -- ++(0,1) -- ++(1,0) -- ++(0,2) -- ++(4,0) -- ++(0,3) -- ++(3,0) -- ++(0,3) -- ++(-10,0)
        -- cycle;
        \node[inner sep=3pt] (src_two) at (8.5,4) {$\ga'$};
        \node (dst_two) at (6,5) {};
        \draw[->] (src_two.west) to [out=170,in=-90] (dst_two);
      \end{tikzpicture}
\end{equation*}
\caption{\label{f.Dgl_and_C}$D_\gl$ contains the green boxes. $C$ contains the green boxes but not the red boxes.}
\end{figure}

Let $A=C\setminus \ga'\cup \ga$. By Lemma \ref{l.modifysubsequence}, $H_{\vs_A}=H_{\vs_C}=H_w$. Furthermore, $e_1(A)<e_1(C)$. By induction, $A\in\ce_{\gl}(\gm)$. Since $C$ is obtained by applying (type 1) excitations to $A$, $C\in\ce_{\gl}(\gm)$.

\vspace{2em}

\noindent \textbf{Case 3}. No restrictions on $C$.

The proof is by induction on $e_2(C)$. If $e_2(C)=0$, then we are done, by Case 2. Assume that $e_2(C)> 0$. Write $\vs_C=(s_{j_1},\ldots,s_{j_q})$. Since $e_2(C)>0$, $q>l(w)$ by Lemma \ref{l.hecketoeyd}(iii).  By assumption $H_{\vs_C}=H_w$. Proposition \ref{p.statistic2} implies that there exist $a<b$ such that $j_a=j_b$ and $s_{j_a}$ commutes with $s_{j_c}$ for every $c$, $a<c<b$. Let $\ga'$ and $\ga$ be the boxes of $C$ containing $s_{j_a}$ and $s_{j_b}$ respectively, and let $k$ be the diagonal of $\ga$ and $\ga'$. Then $C$ contains no box $\gb$, $\ga'<\gb<\ga$, which lies on diagonal $k-1$, $k$, or $k+1$. 
\begin{figure}[h!]
\begin{equation*}
    C=\ \
    \begin{tikzpicture}[xscale=.4,yscale=.4,
    baseline={([yshift=-.5ex]current bounding box.center)},
    every node/.style={scale=.6}]
        \foreach \pos in
        {(2,8),(5,5)}
            {\draw[fill=green!90] \pos +(-.5,-.5) rectangle ++(.5,.5);}
        \foreach \pos in
        {{(3,8)},{(2,7)},{(3,7)},{(4,7)},{(5,6)},(3,6),{(4,6)},(5,6),(4,5)}
            {\draw[fill=red!30] \pos +(-.5,-.5) rectangle ++(.5,.5);}
%
          \draw (0.5,0.5) -- ++(2,0) -- ++(0,1) -- ++(1,0) -- ++(0,2) -- ++(4,0) -- ++(0,3) -- ++(3,0) -- ++(0,3) -- ++(-10,0)
        -- cycle;
        \node[inner sep=1pt] (src_one) at (-.5,9) {$\ga$};
        \node (dst_one) at (2,8) {};
        \draw[->] (src_one) to [out=-40,in=180] (dst_one);
        \node[inner sep=3pt] (src_two) at (8.5,4) {$\ga'$};
        \node (dst_two) at (5,5) {};
        \draw[->] (src_two.west) to [out=170,in=-90] (dst_two);
      \end{tikzpicture}
\end{equation*}
\caption{\label{f.Case3} $C$ contains boxes $\ga$ and $\ga'$ but contains none of the red boxes.}
\end{figure}
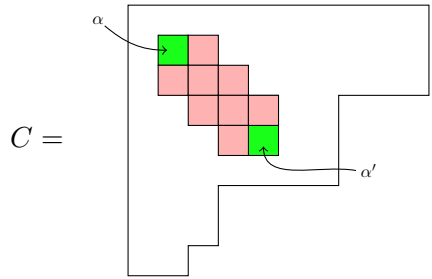
Let $A=C\setminus \ga'$. By Lemma \ref{l.modifysubsequence}, $H_{\vs_A}=H_{\vs_C}=H_w$. Furthermore, $e_2(A)<e_2(C)$. By induction, $A\in\ce_{\gl}(\gm)$. Since $C$ can be obtained by applying (type 2) excitations to $A$, $C\in\ce_{\gl}(\gm)$.
\end{proof}


\subsubsection{Proof of Proposition \ref{p.subsequences}(ii)}

By \eqref{e.lambdav} and Lemma \ref{l.trans_partition},
there are $v_{d+1-i}-(d+1-i)$ boxes in row $i$ and $(d+j)-v_{d+j}$ boxes in column $j$ of $D_{\gl_v}$. Recall that the entry of box $(l,m)$ of $T_{\gm}$ is $s_{d+m-l}$. Thus the entry of the rightmost box of row $i$ of $T_{\gm}$ is $s_{d+(v_{d+1-i}-(d+1-i))-i}=s_{v_{d+1-i}-1}$, and the entry of the  bottom box of column $j$ is $s_{d+j-((d+j)-v_{d+j})}=s_{v_{d+j}}$. Figure \ref{f.r(ic)_proof} shows some of the entries of $T_{\gm}$: $s_x$ and $s_y$ are the reflections in the rightmost box of row $i$ and lowest box of column $j$ respectively. Thus $x=v_{d+1-i}-1$, $y=v_{d+j}$. Also, $p=d+j-i$.
\begin{figure}[h!]
\begin{equation*}
T_{\gm}=
\,
    \begin{tikzpicture}[xscale=.6,yscale=.6,
    baseline={([yshift=-.5ex]current bounding box.center)},
    every node/.style={scale=.7}]
          \draw[gray!7,fill=gray!7] (0.5,0.5) -- ++(9,0) -- ++(0,2) -- ++(1,0) -- ++(0,2) -- ++(2,0) -- ++(0,2) -- ++(-9,0) -- ++(0,-1) -- ++(-3,0) -- cycle;
          \draw[gray!30,fill=gray!30] (2.5,0.5) -- ++(0,6) -- ++(10,0) -- ++(0,-1) -- ++(-9,0) -- ++(0,-5) -- cycle;
         \foreach \pos / \label in
          { {(3,6)}/{},{(4,6)}/{s_{p+1}},{(5,6)}/{s_{p+2}},{(6,6)}/{s_{p+3}},{(12,6)}/{s_{x}},
            {(2,5)}/{s_{p-2}},{(3,5)}/{s_{p-1}},{(4,5)}/{s_{p}},{(5,5)}/{s_{p+1}},{(6,5)}/{s_{p+2}},{(12,5)}/{s_{x-1}},
            {(2,4)}/{s_{p-3}},{(3,4)}/{s_{p-2}},{(4,4)}/{s_{p-1}},{(5,4)}/{s_{p}},{(6,4)}/{s_{p+1}},
            {(2,1)}/{s_{y-1}},{(3,1)}/{s_{y}},{(4,1)}/{s_{y+1}},{(5,1)}/{s_{y+2}}
          }
              {
                \draw \pos +(-.5,-.5) rectangle ++(.5,.5);
                \draw \pos node{$\label$};
              }
         \draw  (3,9) node{$j$};
         \draw  (0,6) node{$i$};
         \draw  (9,6) node{$\cdots$};
         \draw  (3,2.5) node{$\vdots$};
          \draw (0.5,0.5) -- ++(9,0) -- ++(0,2) -- ++(1,0) -- ++(0,2) -- ++(2,0) -- ++(0,4) -- ++(-12,0) -- cycle;
      \end{tikzpicture}
\end{equation*}
\caption{\label{f.r(ic)_proof} Some reflections involved in computing $r(c)$}
\end{figure}
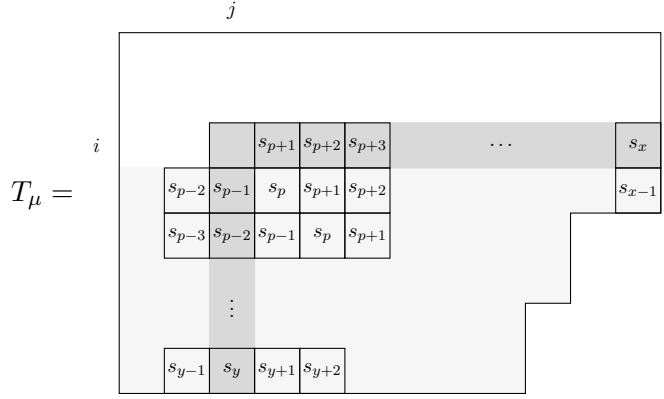

In the expression $r(c)=s_{i_1}s_{i_2}\cdots s_{i_{c-1}}(\ga_{i_c})= s_{i_1}s_{i_2}\cdots s_{p+1}(\gre_p-\gre_{p+1})$, the reflections  $s_{i_j}$ which lie outside darkly shaded boxes in Figure \ref{f.r(ic)_proof} can be removed.
Thus
{\allowdisplaybreaks
\begin{align*}
    r(c)
    &= s_y \cdots s_{p-2}s_{p-1}s_x\cdots s_{p+2} s_{p+1} (\gre_p-\gre_{p+1})\\
    &= s_y\cdots s_{p-2}s_{p-1}s_x\cdots s_{p+2}(\gre_p-\gre_{p+2})\\
    &= s_y\cdots s_{p-2}s_{p-1}s_x\cdots (\gre_p-\gre_{p+3})\\
    &= s_y\cdots s_{p-2}s_{p-1}(\gre_p-\gre_{x+1})\\
    &= s_y\cdots s_{p-2}(\gre_{p-1}-\gre_{x+1})\\
    &= s_y\cdots (\gre_{p-2}-\gre_{x+1})\\
    &= \gre_{y}-\gre_{x+1}
    =\gre_{v_{d+j}}-\gre_{v_{d+1-i}}.
\end{align*}
}

\subsection{Restriction formula in terms of set-valued tableaux}\label{ss.setvalued_tableaux}
Let $\gl$ be a partition. A \textbf{set-valued filling} of $D_\gl$ is a function $T$ which assigns to each box $(i,j)$ of $D_{\gl}$ a nonempty subset $T(i,j)$ of $\{1,\ldots,d\}$.  We call $\gl$ the \textbf{shape} of $T$. We call $(i,j)$ a box of $T$, and refer to an element of $T(i,j)$ as an entry of box $(i,j)$ of $T$, or just an entry of $T$. A set-valued filling $T$ in which each entry of box $(i,j)$ of  $T$ is less than or equal to each entry of box $(i,j+1)$ and strictly less than each entry of box $(i+1,j)$ is said to be \textbf{semistandard}. A  \textbf{set-valued Young tableau}, or just \textbf{set-valued tableau}, is defined to be a semistandard set-valued filling of $D_\gl$. A \textbf{Young tableau}, or just \textbf{tableau}, is a set-valued tableau in which each box contains a single entry.

Let $\gm$ be a partition. We say that a set-valued tableau is \textbf{restricted by $\gm$} if, for any box $(i,j)\in T$ and any entry $x$ of $(i,j)$,
\begin{equation}\label{e.restrictedbyv}
x+j-i\leq \gm(x).
\end{equation}
Denote by $\ct_{\gl}(\gm)$ (resp. $\ct^{\text{red}}_{\gl}(\gm)$) the set of set-valued tableaux (resp. tableaux) of shape $\gl$ which are restricted by $\gm$ (see Figure \ref{f.syt_all}).

\begin{Thm}\label{t.ktheory_svt} Let $w\leq v\in W^{P_d}$, and let $\gl=\gl_w$, $\gm=\gl_v$ be the corresponding partitions. Then
 $$
    i_v^*[\co_{X^w}]=(-1)^{l(w)}\sum_{T\in\ct_{\gl}(\gm)}\prod_{(i,j)\in T}\prod_{x\in T(i,j)}\left(e^{\gre_{v_{d+1-x}}-\gre_{v_{d+x+j-i}}}-1\right)
$$
\end{Thm}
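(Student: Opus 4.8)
The plan is to prove Theorem \ref{t.ktheory_svt} by establishing a weight-preserving bijection between the set-valued tableaux $\ct_\gl(\gm)$ and the excited Young diagrams $\ce_\gl(\gm)$, and then invoking Theorem \ref{t.ktheory_eyd}. Comparing the two restriction formulas, it suffices to construct a bijection $\Psi \colon \ct_\gl(\gm) \to \ce_\gl(\gm)$ together with, for each tableau $T$, a bijection between the multiset of entries $\{(i,j,x) : (i,j)\in T,\ x\in T(i,j)\}$ and the boxes of $C = \Psi(T)$, matching each entry $x$ in box $(i,j)$ of $T$ to a box $(i',j')$ of $C$ so that the associated factors agree, that is, so that
\begin{equation*}
\gre_{v_{d+1-x}} - \gre_{v_{d+x+j-i}} = \gre_{v_{d+1-i'}} - \gre_{v_{d+j'}}.
\end{equation*}
Matching indices, the natural correspondence sends the entry $x$ in box $(i,j)$ of $T$ to the box $(i',j') = (x,\, x+j-i)$; note that $j'-i' = j-i$, so an entry in box $(i,j)$ of $T$ always maps to a box on the \emph{same diagonal} of $D_\gm$ as $(i,j)$, with its row determined by the value $x$.

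First I would check that this assignment lands in $D_\gm$: the restriction condition \eqref{e.restrictedbyv}, $x + j - i \le \gm(x) = (\gl_v)_x$, says exactly that the target box $(x, x+j-i)$ has column index at most the length of row $x$ of $D_\gm$, hence lies in $D_\gm$. Next I would verify that the image of $T$ is genuinely a subset of the right size, namely $|C| = \sum_{(i,j)} |T(i,j)|$, and that distinct entries of $T$ map to distinct boxes of $D_\gm$; injectivity on a fixed diagonal follows because along diagonal $k = j-i$ the semistandardness forces the values $x$ occurring to be distinct as $(i,j)$ and the chosen entry vary, so the target rows $x$ are distinct. The main content is then to show that the resulting subset $C$ is in fact an excited Young diagram, and that $\Psi$ is a bijection. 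For this I expect to characterize excited Young diagrams by a local, diagonal-by-diagonal condition (the boxes of $C$ can be read off by how far each box of $D_\gl$ has been ``pushed'' down its diagonal, subject to the type-1/type-2 excitation rules preventing collisions), and to match this against the semistandard inequalities defining $\ct_\gl(\gm)$.

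The hard part will be proving that the combinatorial rules coincide: that the semistandardness conditions on $T$ (weak increase across rows, strict increase down columns, together with the set-valued structure allowing several entries per box) translate precisely into the admissibility of $C$ as a subset reachable by type-1 and type-2 excitations from $D_\gl$, with repeated or multiple entries in a box of $T$ corresponding exactly to the extra boxes produced by type-2 excitations. I would organize this by first treating ordinary tableaux in $\ct_\gl^{\text{red}}(\gm)$ and showing they biject with reduced excited Young diagrams $\ce_\gl^{\text{red}}(\gm)$ (where no box is duplicated, so the map is entry-to-box and the constraints are the classical semistandard ones), and then accounting for the set-valued structure: each extra element in a box of $T$ contributes precisely one type-2 excitation, matching $e_2(C) = |C| - |\gl|$ with the total number of ``surplus'' entries $\sum_{(i,j)}(|T(i,j)|-1)$. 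Verifying that these two stratifications agree, and that the map respects them, is where the real work lies; the weight-matching identity is then immediate from the index bookkeeping above.
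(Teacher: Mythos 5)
Your first half coincides exactly with the paper's proof: your $\Psi$ is the paper's map $f$ of \eqref{e.f}, the well-definedness check via \eqref{e.restrictedbyv}, the observation that an entry $x$ in box $(i,j)$ goes to box $(x,x+j-i)$ on the same diagonal, and the resulting matching of factors are all identical, and both arguments reduce the theorem to Theorem \ref{t.ktheory_eyd}. The gap is in your plan for proving that $\Psi$ is a bijection onto $\ce_\gl(\gm)$. You want a ``local, diagonal-by-diagonal'' characterization of excited Young diagrams, but no such characterization is available a priori --- the paper explicitly remarks that membership in $\ce_\gl(\gm)$ is not a locally defined condition, the only local criterion being the $0$-Hecke one of Proposition \ref{p.subsequences}(i) --- so positing it comes close to assuming what is to be proved. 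More seriously, your stratified reduction (first biject $\ct^{\text{red}}_\gl(\gm)$ with $\ce^{\text{red}}_\gl(\gm)$, then let each surplus entry ``contribute precisely one type-2 excitation'') cannot be run as an induction. Take $d=3$, $n=6$, $\gl=(1)$, $\gm=(3,3,3)$, and $T(1,1)=\{1,3\}$, which is semistandard and restricted by $\gm$ since $3+1-1\le\gm_3$. Then $\Psi(T)=\{(1,1),(3,3)\}$ is an excited Young diagram, but it is not obtained from $\Psi(T')=\{(1,1)\}$ (where $T'$ omits the surplus entry $3$) by a type-2 excitation: the only type-2 excitation available there adds $(2,2)$. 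One must perform the type-2 excitation creating $(2,2)$ and then push that box to $(3,3)$ by a type-1 excitation; and one cannot reorder so that all type-1 moves precede the type-2 ones, since any type-1 move of $(1,1)$ would destroy the box $(1,1)$, which must survive into the final diagram. So the correspondence between surplus entries and type-2 excitations is only a counting statement ($e_2(C)$ equals the number of surplus entries), not the operational statement your induction needs: the new box generally appears mid-sequence and then migrates down its diagonal by further type-1 moves.

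This interleaving problem is exactly what the paper's proof is designed to handle: it defines type-1 and type-2 excitations directly on set-valued tableaux, proves that every $T\in\ct_\gl(\gm)$ is reachable from $T^{\text{top}}$ by such (interleaved) excitations, and shows that $f$ intertwines tableau excitations with diagram excitations (Lemma \ref{l.fcommutesexcitation}); this yields simultaneously that the image of $f$ lies in $\ce_\gl(\gm)$ and that $f$ is surjective onto it (Lemma \ref{l.fsurjective}). Injectivity is then a separate constructive argument (Lemma \ref{l.finjective}): given $C$, the entries of $T$ on each diagonal must be the row indices of the boxes of $C$ on that diagonal, and semistandardness forces their placement into boxes uniquely, working from the largest diagonal down. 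Note also that your ``injectivity on a fixed diagonal'' remark only shows that distinct entries of a single $T$ map to distinct boxes --- which is what makes the products of factors match --- and is not injectivity of $\Psi$ itself; that still requires an argument of the above kind.
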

Theorem \ref{t.ktheory_svt}, which  appeared in \cite{Kre:05}, is essentially the same statement as Theorem \ref{t.ktheory_eyd}, except that the indexing set $\ce_{\gl}(\gm)$ has been replaced by $\ct_{\gl}(\gm)$. Equation \eqref{e.f} below defines a map $f$ between these indexing sets, and Proposition \ref{p.fbijective} establishes that $f$ is a bijection. A similar and related bijection appears in \cite{KMY:09}.
The map $f$ restricts to a bijection between $\ce^{\text{red}}_{\gl}(\gm)$ and $\ct^{\text{red}}_{\gl}(\gm)$, which was given in 
\cite{Kre:05} and \cite{WoYo:12}, and is closely related to a bijection in \cite{Kog:00}. A bijection between $\ce^{\text{red}}_{\gl}(\gm)$ and the nonintersecting lattice paths of \cite{Kra:01}, \cite{Kra:05}, \cite{KoRa:03}, \cite{Kre:03}, \cite{KrLa:04}, \cite{Kre:08} was given by \cite{Kre:05}.

Set-valued tableaux of shape $\gl$ restricted by $\gm$ were introduced in \cite{Kre:05}. They also appeared in \cite{WoYo:12}, where they were identified as special types of flagged set-valued tableaux. General flagged set-valued tableaux, which were introduced in \cite{KMY:09}, are set-valued tableaux whose entries in row $i$ are less than or equal to the $i$-th coordinate of a fixed vector $\fb$, which is called the flag. The specific flags utilized in \cite{KMY:09} are similar to ours. However, whereas their flags depend on one vexillary permutation, ours depend on two Grassmannian permutations, namely $w$ and $v$. We point out that \cite{WoYo:12} applies more generally to covexillary permutations.

One difference between set-valued tableaux and excited Young diagrams is that the former are defined locally, whereas the latter are not. One can determine whether a set-valued tableau $T$ of shape $\gl$ lies in $\ct_{\gl}(\gm)$ by checking whether its entries satisfy the inequalities  \eqref{e.restrictedbyv}. In particular, one need only look only at $T$. On the other hand, according to the definition, in order to determine whether a subset $C$ of $D_{\gl}$ lies in $\ce_{\gl}(\gm)$, one must search for a sequence of excitations which when applied to $D_{\gl}$ produces $C$. One can give a local criterion for membership in $\ce_{\gl}(\gm)$ based on Proposition \ref{p.subsequences}(i): $C$ lies in $\ce_{\gl}(\gm)$ precisely when the product of the reflections of $\vs_C$ equals $w$. Of course, checking this requires calculations in the 0-Hecke algebra.

\begin{example} Let $n=7$, $d=3$. For $v=\{4,6,7,1,2,3,5\}$ and $w=\{1,3,5,2,4,6,7\}$,  $\gm=\gl_v=(4,4,3)$ and $\gl=\gl_w=(2,1)$. The set-valued tableaux $\ct_{\gl}(\gm)$ appear in Figure \ref{f.syt_all}. The expression for $i_v^*[\co_{X^w}]$ computed using Theorem \ref{t.ktheory_svt} is the same as the expression computed using Theorem \ref{t.ktheory_eyd} in Example \ref{ex.restriction_A}.
\end{example}

\subsubsection{Proof of Theorem \ref{t.ktheory_svt}}
Define $f:\ct_{\gl}(\gm)\to\{\text{subsets of }D_{\gm}\}$ by
\begin{equation}\label{e.f}
\begin{split}
f(T)&= \{(x,x+j-i)\mid (i,j)\in T, x\in T(i,j)\}\\
    &=\{(x,k\backslash\mid (i,k\backslash \in T, x\in T(i,k\backslash\}
\end{split}
\end{equation}
for $T\in \ct_{\gl}(\gm)$, where $(i,k\backslash$ denotes the box in row $i$, diagonal $k$ of $T$. 
To see that every box of $f(T)$ lies in $D_{\gm}$, and thus $f$ is well defined, observe that for every box $(i,j)\in T$ and entry $x\in T(i,j)$, $1\leq x\leq d$ and $1\leq x+j-i\leq \gm(x)$.  Indeed,  $1\leq x\leq d$ by definition. Semistandardness of $T$ forces $i\leq x$; hence $1\leq x+j-i$. The final inequality, namely $x+j-i\leq \gm(x)$, is \eqref{e.restrictedbyv}. As stated above, we prove Theorem \ref{t.ktheory_svt} by showing that $f$ is in fact a bijection from $\ct_\gl(\gm)$ to $\ce_\gl(\gm)$ (Proposition \ref{p.fbijective}).

We emphasize that $T$ and $f(T)$ are associated to Young diagrams of different shapes: $T$ has shape $\gl$, and $f(T)$ is a subset of $D_\gm$, which has shape $\gm$. The subset $f(T)$ of $D_{\gm}$ contains, for each integer $x$ of box $(i,j)$ of $T$, box $(x, x+j-i)$ of $D_{\gm}$. We write $f(T|_x)=(x, x+j-i)=(x,j-i\backslash$. Observe that
$f(T|_x)$ lies in the same diagonal as $x$, namely $j-i$, but in row $x$ instead of $i$. This suggests a more qualitative description of $f$. Suppose that the entries of some box $\ga$ of $T$  are $x_1,\ldots,x_k$. Corresponding to these entries, $f(T)$ will have boxes in $D_{\gm}$ in the same diagonal as $\ga$, and rows $x_1,\ldots,x_k$; thus the entries of $T$ record the rows of the boxes of $f(T)$. The inequality \eqref{e.restrictedbyv} merely ensures that the boxes of $f(T)$ actually lie in $D_\gm$.

In Section \ref{ss.eyd} we defined excitations on subsets of Young diagrams. Here we need an analogous operation on set-valued tableaux. Let $T\in\ct_{\gl}(\gm)$, and suppose that $x\in T(i,j)$, $x\not\in T(i,j+1)$, $x+1\not\in T(i,j)$, $x+1\not\in T(i+1,j)$, $(x+1)+j-i\leq \gm(x+1)$.   An excitation of type 1 replaces entry $x$ with $x+1$ in box $(i,j)$ of $T$, and an excitation of type 2 adds entry $x+1$ to box $(i,j)$ of $T$. Both types of excitations preserve semistandardness and the property of being restricted by $\gm$.
Let $T^{\text{top}}\in \ct_{\gl}(\gm)$ be defined by  $T^{\text{top}}(i,j)=\{i\}$, i.e., each box in row $i$ contains the single entry $i$. Then $f(T^{\text{top}})=D_{\gl}$.

\begin{figure}[height=30em]
    \centering
    \begin{tikzpicture}[
    level distance = 2.8cm,sibling distance = 2cm, grow = east,
    type2/.style={edge from parent/.style={draw,double distance =
    1.5pt,>=latex,->}},
    type1/.style={edge from parent/.style={draw,>=latex,->}},
    typ1/.style={>=latex,->},
    typ2/.style={double distance=1.5pt,>=latex,->}
]
\newcommand{\syt}[3]
    {
        \begin{tikzpicture}[scale=.6, every node/.style={scale=.8}]
             \foreach \pos / \label in
              { {(1,2)}/{#1},{(2,2)}/{#2},{(1,1)}/{#3} }
                  {
                    \draw \pos +(-.5,-.5) rectangle ++(.5,.5);
                    \draw \pos node {\label};
                    }
        \end{tikzpicture}
      }

\node (e1) [rectangle] {\syt{1}{1}{2}}
     child {node (e2) [rectangle] {\syt{1}{2}{2}}
        child {node (e6) [rectangle] {\syt{1}{2}{3}}
            edge from parent[draw=none]
            child {node (e10) [rectangle] {\syt{2}{2}{3}}
                edge from parent[draw=none]
            }
        }
    }
    child {node (e3) [rectangle] {\syt{1}{1}{3}}
        child {node (e7) [rectangle] {\syt{1}{2}{2,3}}
            edge from parent[draw=none]
            child {node (e11)[rectangle] {\syt{1,2}{2}{3}}
                edge from parent[draw=none]
            }
        }
    }
    child {node (e4) [rectangle] {\syt{1}{1}{2,3}}
        child {node (e8) [rectangle] {\syt{1}{1,2}{3}}
            edge from parent[draw=none]
        }
    }
    child {node (e5) [rectangle] {\syt{1}{1,2}{2}}
        child {node (e9) [rectangle] {\syt{1}{1,2}{2,3}}
            edge from parent[draw=none]
        }
    };
    \draw[rounded corners = 5pt, dashed]
        (e1.north west) -- (e1.north east)
        -- (e3.north west) -- (e3.north east)
        -- (e6.north west) -- (e10.north east)
        -- (e10.south east) -- (e2.south west)
        -- (e1.south west) -- cycle;
    \draw[typ2] (e5.0) -- (e9.180);
    \draw[typ1] (e5.340) -- (e8.160);
    \draw[typ2] (e4.20) -- (e9.200);
    \draw[typ1] (e4.340) -- (e7.160);
    \draw[typ2] (e3.20) -- (e8.200);
    \draw[typ1] (e3.340) -- (e6.160);
    \draw[typ2] (e2.20) -- (e7.200);
    \draw[typ1] (e2.0) -- (e6.180);
    \draw[typ2] (e6.20) -- (e11.200);
    \draw[typ1] (e6.0) -- (e10.180);
    \draw[typ2] (e1) -- (e4);
    \draw[typ2] (e1) -- (e5);
    \draw[typ1] (e1) -- (e2);
    \draw[typ1] (e1) -- (e3);
\end{tikzpicture}
    \caption{\label{f.syt_all} The set-valued tableaux $\ct_\gl(\gm)$, for $\gm=(4,4,2)$, $\gl=(2,1)$. The set-valued tableaux enclosed by the dashed line are Young tableaux; they comprise $\ct^{\text{red}}_\gl(\gm)$. The entries of each set-valued tableau record the row numbers of the boxes in the corresponding excited Young diagram of Figure \ref{f.eyd_all}.}
\end{figure}

\begin{Lem}
Any element $T\in \ct_{\gl}(\gm)$ can be obtained by applying a sequence of excitations to  $T^{\text{top}}$.
\end{Lem}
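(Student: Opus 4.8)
The plan is to argue by a downward induction, showing that every $T\in\ct_\gl(\gm)$ other than $T^{\text{top}}$ admits a \emph{reverse} excitation producing some $T'\in\ct_\gl(\gm)$ from which $T$ is recovered by a single (forward) excitation. As the controlling statistic I would use
\[
N(T)=\sum_{(i,j)\in D_\gl}\;\sum_{x\in T(i,j)}(x-i).
\]
Semistandardness forces $x\ge i$ for every entry, so $N(T)\ge 0$, and $N(T)=0$ exactly when every box $(i,j)$ equals $\{i\}$, i.e.\ $T=T^{\text{top}}$. A reverse excitation of type $1$ (lowering an entry $v$ to $v-1$) decreases $N$ by $1$, and of type $2$ (deleting an entry $v$ from a box in row $a$) decreases $N$ by $v-a\ge 1$; hence once the existence of a reverse excitation is established, induction on $N(T)$ finishes the proof.

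The first observation that makes this work is that the two preconditions of a forward excitation concerning the \emph{down} and \emph{right} neighbours are automatic. Indeed, if $v\in T(a,b)$ then strict increase down columns gives $v\notin T(a+1,b)$, and weak increase along rows gives $v-1\notin T(a,b+1)$; moreover the inequality $v+b-a\le\gm(v)$ holds because $T$ is restricted by $\gm$, and since $\gm$ is a partition $\gm_{v-1}\ge\gm_v$, so the lowered entry still satisfies $(v-1)+b-a\le\gm(v-1)$. Thus the only real issue is to keep $T'$ semistandard, a condition on the \emph{left} neighbour $(a,b-1)$ and the \emph{upper} neighbour $(a-1,b)$ of the box being modified. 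Deleting an entry (reverse type $2$) can never violate semistandardness, since the box being shrunk still contains $v-1<v$ and so its minimum is unchanged; hence a reverse type $2$ excitation is available as soon as some box contains consecutive entries $v-1,v$ with $v$ larger than its row index.

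The hard part will be producing a box at which a reverse type $1$ excitation (lowering $v$ with $v-1\notin T(a,b)$) is legal, i.e.\ a box whose left neighbour omits $v$ and whose upper neighbour omits $v-1$; these are precisely the two ways lowering could break the ``minimum of the box'' inequalities with the left and upper neighbours. I would locate such a box by a descent. Start at any box $(a,b)$ with an excess entry $v>a$ (one exists since $N(T)>0$). If $(a,b)$ already contains $v-1$, delete $v$ (reverse type $2$) and stop. Otherwise, if the left neighbour contains $v$, move to it (same value $v$, smaller column); if the upper neighbour contains $v-1$, move to it (value $v-1$, smaller row); in either case the new box still carries an excess entry. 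The pair $(v,b)$ strictly decreases lexicographically at each step, so the descent terminates at a box $(a,b)$, value $v>a$, whose left neighbour omits $v$ and whose upper neighbour omits $v-1$.

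At the terminal box one of the two reverse moves applies: if $(a,b)$ contains $v-1$ we delete $v$, and otherwise we lower $v$ to $v-1$. Using $(a,b-1)\not\ni v$ and $(a-1,b)\not\ni v-1$ together with the automatic down/right facts above, one checks routinely that the resulting $T'$ is again semistandard and restricted by $\gm$, and that $T$ is obtained from $T'$ by the corresponding forward excitation. This supplies the reverse excitation needed for the inductive step, completing the argument. The only delicate point is the termination and legality of the descent, i.e.\ verifying that the left ($v$) and upper ($v-1$) obstructions can always be cleared simultaneously; the lexicographic potential $(v,b)$ is exactly what guarantees this.
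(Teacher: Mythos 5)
Your proof is correct, and at the top level it shares the skeleton of the paper's argument --- a downward induction exhibiting, for each $T\neq T^{\text{top}}$, a smaller tableau $T'\in\ct_{\gl}(\gm)$ from which $T$ is recovered by exciting --- but the mechanism for producing the reverse move is genuinely different. The paper inducts on the sum $s(T)$ of all entries and splits into two cases: if some box has two or more entries, it deletes the second-smallest entry of that box (any such box will do, but then $T$ is recovered from $T'$ by a \emph{sequence} of excitations, one type~2 followed by a chain of type~1's, not by a single move); if every box is single-valued, it decrements the entry of the topmost-then-leftmost box disagreeing with $T^{\text{top}}$, and maximality of that box forces its left and upper neighbours to agree with $T^{\text{top}}$, making legality of the decrement immediate. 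You instead induct on $N(T)=\sum (x-i)$ and run a lexicographic descent on the pair $(v,b)$ to locate a box at which a \emph{single} reverse excitation is legal; the invariant $v>a$ along the descent and the observation that the down/right preconditions are automatic are exactly the points that need checking, and you check them. What your route buys is a sharper structural fact: every $T\neq T^{\text{top}}$ in $\ct_{\gl}(\gm)$ is one excitation away from a smaller element of $\ct_{\gl}(\gm)$, so the poset of set-valued tableaux is connected to $T^{\text{top}}$ by single steps; the paper's Case~1 gives only multi-step reversibility (when the second-smallest entry is not adjacent to the smallest, undoing the deletion needs a type~2 move followed by several type~1 moves). What the paper's route buys is brevity: its choice of box makes all legality checks trivial, at the cost of leaving that excitation-chain verification implicit.
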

\begin{proof}
For any set-valued tableau $T$ of shape $\gl$, define $s(T)$ to be the sum of the entries of $T$. Then
$s(T)\geq s(T^{\text{top}})$, and $s(T)=s(T^{\text{top}})$ if and only if $T=T^{\text{top}}$.
We proceed by induction on $s(T)$. Assume that $s(T)>s(T^{\text{top}})$, and thus $T\neq T^{\text{top}}$.

\begin{description}
    \item[\it Case 1] At least one box of $T$ contains more than one entry. Choose any such box. Define $T'$ to be the set-valued tableau obtained by removing the second smallest entry from this box of $T$.

    \item[\it Case 2] Every box of $T$ contains exactly one entry. Let $(i,j)$ be the largest box for which the entries of $T$ and $T^{\text{top}}$ do not agree (where we use the order on boxes introduced in Section \ref{ss.proof_p.subsequences}). Let $T'$ be the tableau obtained from $T$ by subtracting one from its entry in box $(i,j)$.
\end{description}
In both cases, $T'\in \ct_{\gl}(\gm)$ and $s(T')<s(T)$. By the induction hypothesis, $T'$ can be obtained by applying a sequence of excitations to $T^{\text{top}}$. Furthermore,
$T$ can be obtained by applying a sequence of excitations to $T'$. The result follows.
\end{proof}

Excitations commute with $f$, as made precise by the following lemma, which follows from the definitions.
\begin{Lem}\label{l.fcommutesexcitation}
Let $T\in\ct_{\gl}(\gm)$, and let $C=f(T)$. Let $x$, $x<d$, be an entry of box $(i,j)$ of $T$, and let $(a,b)=f(T|_x)\in C$. Then
$x+1\not\in T(i+1,j)$, $x\not\in T(i,j+1)$, $x+1\not\in T(i,j)$, $(x+1)+j-i\leq \gm(x+1)$ if and only $(a+1,b)\not\in C, (a,b+1)\not\in C,(a+1,b+1)\not\in C$, $(a+1,b+1)\in D_{\gm}$ respectively. If all of these conditions are satisfied, let $\gn$ be an excitation of $T$ modifying $x$, and let $\gm$ be the excitation of $C$ of the same type as $\gn$ modifying $(a,b)$. Then $f(\gn(T))=\gm(f(T))$. We say that $\gm$ corresponds to $\gn$ under $f$.
\end{Lem}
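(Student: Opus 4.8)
The plan is to translate both sides of each of the four asserted equivalences into the common language of ``which values occur on which diagonals of $T$'', and then to read everything off from the semistandardness of $T$ together with the hypothesis $x\in T(i,j)$. Writing $(a,b)=f(T|_x)=(x,\,x+j-i)$, the box $(a,b)$ lies on diagonal $b-a=j-i$ and, crucially, its \emph{row} index $a=x$ equals the entry of $T$ that produced it. More generally, unwinding \eqref{e.f}, for any box $(r,c)$ of $D_\gm$ lying on diagonal $k=c-r$ one has $(r,c)\in C=f(T)$ if and only if the value $r$ occurs on diagonal $k$ of $T$ (that is, $r\in T(i',j')$ for some box with $j'-i'=k$). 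Thus each condition $(a',b')\in C$ on the right becomes a statement of the form ``the value $a'$ occurs on diagonal $b'-a'$ of $T$'', and the whole lemma reduces to locating, on the three relevant diagonals $j-i-1$, $j-i$, $j-i+1$, the boxes in which the values $x$ and $x+1$ can possibly sit.

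The engine is the following standard consequence of semistandardness, which I would record first. From column-strictness (each entry of $(i,j)$ is strictly less than each entry of $(i+1,j)$) and row-weakness (each entry of $(i,j)$ is at most each entry of $(i,j+1)$) one gets, along any fixed diagonal, that $\max T(r,c)<\min T(r+1,c+1)$; hence entries strictly increase as one moves down-right along a diagonal, and in particular each value occurs in \emph{at most one} box of a given diagonal. Feeding in $x\in T(i,j)$, these same inequalities localize the candidates: on diagonal $j-i-1$ the value $x+1$ can occur only in $(i+1,j)$ (the box up-left of it has entries at most $x$, and the boxes further down-right have entries at least $x+2$); on diagonal $j-i+1$ the value $x$ can occur only in $(i,j+1)$; and on diagonal $j-i$ the value $x+1$ can occur only in $(i,j)$ itself or in $(i+1,j+1)$.

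With these localizations in hand the correspondences fall out almost immediately. Since $(a+1,b)$ sits in row $x+1$ on diagonal $j-i-1$ and $(a,b+1)$ sits in row $x$ on diagonal $j-i+1$, the unique-candidate statements give $(a+1,b)\in C\iff x+1\in T(i+1,j)$ and $(a,b+1)\in C\iff x\in T(i,j+1)$, which are the first two correspondences. For the fourth, $(a+1,b+1)=(x+1,\,(x+1)+(j-i))$ lies in $D_\gm$ exactly when $(x+1)+j-i\le\gm(x+1)$, which is \eqref{e.restrictedbyv} for the entry $x+1$. The only genuinely subtle point is the third correspondence: $(a+1,b+1)$ lies in row $x+1$ on diagonal $j-i$, so $(a+1,b+1)\in C$ holds iff $x+1$ occurs in $(i,j)$ \emph{or} in $(i+1,j+1)$. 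I would dispose of the extraneous alternative by observing that $x+1\in T(i+1,j+1)$ forces, via row-weakness together with the bound $\min T(i+1,j)\ge x+1$ coming from $x\in T(i,j)$, the equality $T(i+1,j)=\{x+1\}$; so this case can occur only when $x+1\in T(i+1,j)$, i.e. only when the first condition already fails on both sides. Hence in the regime where an excitation is to be performed the third correspondence reads $(a+1,b+1)\in C\iff x+1\in T(i,j)$, as claimed. I expect this cross-dependence between the first and third conditions to be the main (and essentially the only) obstacle.

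Finally, granting all four conditions, I would verify the commutation $f(\gn(T))=\gm(f(T))$ directly. A type-1 excitation $\gn$ replaces $x$ by $x+1$ in box $(i,j)$; under $f$ this deletes $(a,b)$ and creates $(x+1,(x+1)+(j-i))=(a+1,b+1)$, which is precisely the type-1 excitation of $C$ at $(a,b)$. A type-2 excitation instead adjoins $x+1$ to $(i,j)$ while keeping $x$; under $f$ this adjoins $(a+1,b+1)$ while keeping $(a,b)$, the type-2 excitation of $C$ at $(a,b)$. All other entries of $T$ are left unchanged and map to the same boxes of $D_\gm$, so $f(\gn(T))=\gm(f(T))$ with $\gm$ of the same type as $\gn$. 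The four conditions just verified are exactly the hypotheses making $\gn$ a legal excitation of $T$ and $\gm$ a legal excitation of $C$, which is what renders the correspondence well defined.
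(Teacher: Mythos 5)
Your proof is correct, and it supplies an argument the paper never gives: the paper states this lemma with the single remark that it ``follows from the definitions,'' offering no proof. Your definitional unwinding is exactly the right engine --- a box $(r,c)$ of $D_\gm$ lies in $f(T)$ if and only if the value $r$ occurs on diagonal $c-r$ of $T$, and semistandardness forces $\max T(r,c) < \min T(r+1,c+1)$ along diagonals, so each value occupies at most one box per diagonal. Your localization of the possible positions of $x$ and $x+1$ on the diagonals $j-i-1$, $j-i$, $j-i+1$ is carried out correctly (including the implicit use of the Young-diagram property to guarantee that the intermediate boxes needed for the chains of inequalities exist), and the final commutation $f(\gn(T))=\gm(f(T))$ is clean, with the quietly important point that only the entry $x$ of box $(i,j)$ can contribute the box $(a,b)$ to $C$, again by uniqueness along diagonals.

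Your treatment of the third equivalence deserves particular emphasis, because you caught something the lemma's ``respectively'' phrasing glosses over: read literally and in isolation, ``$x+1\not\in T(i,j)$ if and only if $(a+1,b+1)\not\in C$'' is false. For example, take $d=2$, $\gl=\gm=(2,2)$, and $T$ with $T(1,1)=T(1,2)=\{1\}$, $T(2,1)=T(2,2)=\{2\}$; with $x=1$ in box $(1,1)$ one has $x+1\not\in T(1,1)$ and yet $(a+1,b+1)=(2,2)\in f(T)$. Your resolution is exactly right: $x+1\in T(i+1,j+1)$ forces $T(i+1,j)=\{x+1\}$, since column-strictness gives $\min T(i+1,j)\ge x+1$ while row-weakness gives $\max T(i+1,j)\le \min T(i+1,j+1)\le x+1$; hence the anomalous case can arise only when the first condition already fails on both sides, and the conjunction of the four equivalences --- which is what the application to legal excitations and to the commutation statement actually requires, and what Lemmas \ref{l.fsurjective} and \ref{l.finjective} use --- does hold. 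So your proof is not only complete where the paper is silent, it also records a correction to the literal statement.
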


\begin{Lem}
The image of $f$ lies in $\ce_{\gl}(\gm)$.
\end{Lem}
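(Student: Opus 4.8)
The plan is to combine the two preceding lemmas. The previous lemma shows that any $T\in\ct_\gl(\gm)$ can be produced from $T^{\text{top}}$ by applying a finite sequence of excitations, and we have already observed that $f(T^{\text{top}})=D_\gl$, which lies in $\ce_\gl(\gm)$ by definition (it is the very subset of $D_\gm$ from which excited Young diagrams are generated). Since Lemma \ref{l.fcommutesexcitation} says that excitations commute with $f$, it should suffice to push a sequence of excitations through $f$ and induct on its length.

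First I would fix $T\in\ct_\gl(\gm)$ and write $T=\gn_k\circ\cdots\circ\gn_1(T^{\text{top}})$, where each $\gn_r$ is an excitation, and argue by induction on $k$. The base case $k=0$ gives $f(T)=f(T^{\text{top}})=D_\gl\in\ce_\gl(\gm)$. For the inductive step, set $T'=\gn_{k-1}\circ\cdots\circ\gn_1(T^{\text{top}})$, so that $T=\gn_k(T')$ and, by the inductive hypothesis, $C':=f(T')\in\ce_\gl(\gm)$.

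Next I would apply Lemma \ref{l.fcommutesexcitation} to $T'$ and the excitation $\gn_k$. The one hypothesis worth checking is the requirement $x<d$, where $x$ denotes the entry of $T'$ that $\gn_k$ modifies: since an excitation introduces the entry $x+1$ and the resulting tableau $T=\gn_k(T')$ still lies in $\ct_\gl(\gm)$, all of whose entries lie in $\{1,\ldots,d\}$, we have $x+1\le d$, hence $x<d$. Lemma \ref{l.fcommutesexcitation} then supplies an excitation of $C'$ of the same type as $\gn_k$, say $\gs$, with $f(\gn_k(T'))=\gs(C')$; that is, $f(T)=\gs(C')$.

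Finally, since $C'\in\ce_\gl(\gm)$ and $\gs$ is a type 1 or type 2 excitation, $\gs(C')\in\ce_\gl(\gm)$ directly from the definition of $\ce_\gl(\gm)$ as the collection of subsets of $D_\gm$ obtainable from $D_\gl$ by a sequence of excitations. Thus $f(T)\in\ce_\gl(\gm)$, completing the induction. This argument is mostly bookkeeping once the two preceding lemmas are available; the only mild obstacle is verifying that the hypotheses of Lemma \ref{l.fcommutesexcitation}, in particular the condition $x<d$, persist at every step, which is guaranteed by the fact that all the intermediate tableaux remain in $\ct_\gl(\gm)$.
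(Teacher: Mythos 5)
Your proof is correct and takes essentially the same approach as the paper's: both decompose $T$ as a sequence of excitations applied to $T^{\text{top}}$, use $f(T^{\text{top}})=D_\gl$, and push the excitations through $f$ one at a time via Lemma \ref{l.fcommutesexcitation}. The paper compresses this into a single line; your explicit induction and verification of the hypothesis $x<d$ (guaranteed because every intermediate tableau stays in $\ct_{\gl}(\gm)$) simply spell out details the paper leaves implicit.
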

\begin{proof}
 Let $T\in \ct_{\gl}(\gm)$. Then $T=\gn_k\cdots\gn_1(T^{\text{top}})$ for some excitations $\gn_1,\ldots,\gn_k$. By Lemma \ref{l.fcommutesexcitation}, $f(T)=\gm_k\cdots\gm_1(D_{\gl})\in \ce_{\gl}(\gm)$, where $\gm_1,\ldots\gm_k$ correspond to $\gn_1,\ldots\gn_k$ under $f$.
\end{proof}

\begin{Lem}\label{l.fsurjective}
The image of $f$ equals $\ce_{\gl}(\gm)$.
\end{Lem}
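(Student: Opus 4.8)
The plan is to prove the reverse containment $\ce_\gl(\gm) \subseteq \im f$; combined with the preceding lemma, which gives $\im f \subseteq \ce_\gl(\gm)$, this yields the asserted equality. Since every element of $\ce_\gl(\gm)$ is by definition obtained from $D_\gl$ by a sequence of excitations, and since $f(T^{\text{top}}) = D_\gl$, the idea is to lift such a sequence of excitations on subsets of $D_\gm$ to a sequence of excitations on set-valued tableaux starting from $T^{\text{top}}$, running the correspondence of Lemma \ref{l.fcommutesexcitation} backwards.

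In detail, I would fix $C \in \ce_\gl(\gm)$ and write $C = \gt_k \cdots \gt_1(D_\gl)$, where $\gt_1, \ldots, \gt_k$ are excitations of subsets of $D_\gm$, and set $C_r := \gt_r \cdots \gt_1(D_\gl)$. Starting from $T_0 = T^{\text{top}}$, so that $f(T_0) = D_\gl = C_0$, I would construct $T_0, T_1, \ldots, T_k \in \ct_\gl(\gm)$ with $f(T_r) = C_r$ by induction on $r$. Given $T_{r-1}$ with $f(T_{r-1}) = C_{r-1}$, the excitation $\gt_r$ modifies some box $(a,b) \in C_{r-1}$. The point is that $(a,b) = f(T_{r-1}|_x)$ for a unique entry $x$ of $T_{r-1}$, namely $x = a$ occurring in the unique box of $T_{r-1}$ on the diagonal $b-a$ that contains the value $a$. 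Granting this, the "if and only if" of Lemma \ref{l.fcommutesexcitation} shows that, because $\gt_r$ is applicable to $C_{r-1}$ at $(a,b)$, the excitation $\gn_r$ of $T_{r-1}$ of the same type modifying $x$ is applicable; setting $T_r = \gn_r(T_{r-1})$, the same lemma gives $f(T_r) = \gt_r(f(T_{r-1})) = C_r$. After $k$ steps, $T := T_k \in \ct_\gl(\gm)$ satisfies $f(T) = C$, so $C \in \im f$.

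The main obstacle is precisely the uniqueness claim used above: each box of $C = f(T_{r-1})$ must arise from a single entry of $T_{r-1}$, so that the correspondence of Lemma \ref{l.fcommutesexcitation} can be inverted unambiguously and the lifted excitation is of a well-defined type. This follows from semistandardness. Along any fixed diagonal of $T_{r-1}$, for two consecutive diagonal boxes $(i,j)$ and $(i+1,j+1)$ one has (max entry of $(i,j)$) $\le$ (min entry of $(i,j+1)$) $<$ (min entry of $(i+1,j+1)$), using weak increase along row $i$ and strict increase down column $j+1$; since the boxes on a diagonal of a Young diagram are contiguous, this propagates to show that entries strictly increase as one moves down and to the right along the diagonal. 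Hence a given value occurs in at most one box on each diagonal, so $x \mapsto f(T_{r-1}|_x)$ is a bijection from the entries of $T_{r-1}$ onto the boxes of $f(T_{r-1})$. With this established the inductive lifting goes through, completing the proof.
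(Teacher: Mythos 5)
Your proposal is correct and follows essentially the same route as the paper: induction on the length of an excitation sequence producing $C$ from $D_\gl$, starting from $f(T^{\text{top}})=D_\gl$ and lifting each excitation through the equivalence and commutation statements of Lemma \ref{l.fcommutesexcitation}. The only difference is your careful justification that the entry $x$ with $f(T_{r-1}|_x)=(a,b)$ is \emph{unique} (via strict increase of entries along diagonals); this is true but not needed here, since mere existence of such an $x$ --- immediate from the definition of $f$ --- already lets the lemma produce a lift, and the paper accordingly skips it.
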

\begin{proof}
 Let $C\in \ce_{\gl}(\gm)$. Then $C=\gm_k\cdots\gm_1(D_{\gl})$ for some excitations $\gm_1,\ldots,\gm_k$. We prove that $C\in f(\ct_{\gm}(\gl))$ by induction on $k$. For the base case, we use the fact that $\gl=f(T^{\text{top}})$. Let $C'=\gm_{k-1}\cdots\gm_1(D_{\gl})$. The excitation $\gm_k$ modifies some box $(a,b)$ of $C'$. Thus  $(a+1,b)\not\in C, (a,b+1)\not\in C,(a+1,b+1)\not\in C$, $(a+1,b+1)\in D_{\gm}$. By the induction hypothesis, $C'=f(T')$ for some $T'\in\ct_{\gl}(\gm)$.  Let $x\in T'(i,j)$ be such that $f(T|_x)=(a,b)$, and let $\gn$ be the excitation of the same type as $\gm_k$ modifying $x$. By Lemma \ref{l.fcommutesexcitation}, $C=\gm_k C'=\gm_k f(T')=f(\gn T')$.
 \end{proof}

\begin{Lem}\label{l.finjective}
The map $f$ is injective.
\end{Lem}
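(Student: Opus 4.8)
The plan is to prove injectivity by reconstructing $T$ from the subset $C=f(T)$ together with the known shape $\gl$, i.e.\ by exhibiting a left inverse. The starting point is to read the data off $C$ box by box: a box $(a,b)\in C$ records the entry $a$ together with its diagonal $k=b-a$, since by definition $(a,b)=(x,x+j-i)$ forces $x=a$ and $j-i=b-a$. First I would prove that along a fixed diagonal a given value occurs at most once: for two boxes $(i,i+k)$ and $(i+1,i+1+k)$ of $D_\gl$ the semistandardness inequalities give $\max T(i,i+k)<\min T(i+1,i+k)\le \min T(i+1,i+1+k)$, so the entry sets strictly increase down the diagonal. Consequently the map sending a pair (box, entry) to $(x,x+j-i)$ is injective for a fixed $T$; in particular $|C|=s(T)$, and the only thing $C$ might fail to remember is the \emph{row} $i$ of the source box of each $(a,b)$. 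Thus injectivity reduces to showing that, on each diagonal $k$, the assignment of the values $E_k=\{a:(a,b)\in C,\ b-a=k\}$ to the $\gl$-boxes on that diagonal is uniquely determined by $C$.

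On diagonal $k$ the $\gl$-boxes sit in consecutive rows $i_1<\dots<i_p$, and by the strict-increase property just proved the sorted set $E_k$ is partitioned into increasing blocks $F_1<\dots<F_p$ with $F_t=T(i_t,i_t+k)$ and $\min F_t\ge i_t$. In the \emph{reduced} (single-valued) case each block is a singleton, so the blocks are obtained simply by sorting and matching the $t$-th smallest value of $E_k$ to the $t$-th box; this recovers the reduced-case injectivity already implicit in the bijection $\ce^{\mathrm{red}}_\gl(\gm)\cong\ct^{\mathrm{red}}_\gl(\gm)$. For the general case I would reconstruct the blocks by induction on $k$, from the smallest diagonal (a single box, where $F_1=E_k$ is forced) upward: having reconstructed diagonal $k-1$, the box $B'=(i_t+1,i_t+k)$ lying directly below $(i_t,i_t+k)$ is a known $\gl$-box on diagonal $k-1$, and the inequalities $\max F_t<\min T(B')$ and $\max T(B')\le \min F_{t+1}$ locate the cut between $F_t$ and $F_{t+1}$. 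Since $E_k$, the shape, and diagonal $k-1$ are all determined by $C$, the cuts—and hence $T$—are determined, giving $g(f(T))=T$ and therefore injectivity.

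The main obstacle is exactly this cut-forcing step when a box carries several (possibly non-consecutive) entries: then $\min T(B')$ and $\max T(B')$ do not coincide, and a purely local comparison with diagonal $k-1$ can leave the position of the cut ambiguous. The point to make precise is that the ambiguity is resolved once the \emph{entire} set $C$ is used, i.e.\ by combining the constraints coming from both neighbouring diagonals rather than from one alone; verifying that these jointly pin down every cut is where the real work lies. As an alternative route that sidesteps the cut analysis, I would set up an induction on $s(T)=|C|$ and peel off a single canonically chosen removable entry: choosing the box of $C$ that is extremal in the total order on boxes used in Section~\ref{ss.proof_p.subsequences}, one checks its source (box, entry) is forced, so it agrees for any two preimages $T,T'$ of $C$; removing it corresponds, via Lemma~\ref{l.fcommutesexcitation} (excitations commute with $f$, with $f(T^{\text{top}})=D_\gl$), to deleting one box of $C$, and the induction hypothesis applied to the smaller diagram then yields $T=T'$.
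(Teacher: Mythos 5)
Your main route is the paper's own argument run in the opposite direction. The paper also proves injectivity by reconstructing $T$ from $C$ one diagonal at a time, but it starts at the \emph{largest} diagonal (a single box) and works downward: each value $x$ occurring on diagonal $q$ of $C$ must be placed in the unique box of diagonal $q$ of $T$ whose already-reconstructed neighbors on diagonal $q+1$ (the box above and the box to the right) are compatible with $x$ under semistandardness. You sweep from the smallest diagonal upward and use the below-neighbor on diagonal $k-1$; this is the same mechanism, mirrored, and it does work.

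The only real flaw is your assessment of the ``main obstacle'': there is no ambiguity in the cut-forcing step, and the fix you propose --- invoking both neighbouring diagonals --- is both unnecessary and unavailable, since in your upward induction diagonal $k+1$ has not yet been reconstructed. The one-sided constraints suffice, by exactly the partition argument you began. Fix $t<p$ and let $B'_t=(i_t+1,i_t+k)$; this box lies in $D_\gl$ (it is the left neighbor of the $\gl$-box $(i_{t+1},i_{t+1}+k)=(i_t+1,i_t+1+k)$, and Young diagrams are closed under moving left) and it lies on diagonal $k-1$. For any preimage $T$ of $C$, every element of $F_1\cup\dots\cup F_t$ is $\le \max F_t<\min T(B'_t)$ by column strictness, and every element of $F_{t+1}\cup\dots\cup F_p$ is $\ge \min F_{t+1}\ge \max T(B'_t)\ge \min T(B'_t)$ by row weak increase. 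Since these blocks exhaust $E_k$, no element of $E_k$ lies in the gap $[\min T(B'_t),\max T(B'_t))$ at all, and $F_1\cup\dots\cup F_t=\{e\in E_k\mid e<\min T(B'_t)\}$. Thus every cut position is determined by $E_k$ together with $T$ restricted to diagonal $k-1$; two preimages of $C$ agree there by your induction hypothesis, hence have identical blocks on diagonal $k$, and your proof closes. (The fallback route is best discarded: deleting the sole entry of a box does not produce an element of $\ct_\gl(\gm)$, and Lemma \ref{l.fcommutesexcitation} concerns excitations, not deletions, so the inductive step there is not justified as stated.)
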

\begin{proof}
Let $C\in\ce_{\gl}(\gm)$. By Lemma \ref{l.fsurjective}, there exists $T\in\ct_{\gl}(\gm)$ such that $f(T)=C$. We give a constructive proof of the uniqueness of $T$ by filling in the boxes of $T$ one diagonal at a time, beginning with the largest diagonal. As we shall see, there is only one way to fill in the boxes so that $T$ is semistandard and $f(T)=C$.

The largest diagonal $r$ of $D_{\gl}$ contains a single box, namely $(1,r\backslash$. For each box $(x,r\backslash$ of $C$, place an $x$ in box $(1,r\backslash$ of $T$. Now assume that we have filled in each box in diagonals $q+1,\ldots,r$ of $T$ with a nonempty set of positive integers. Let $(x,q\backslash\in C$. In order to satisfy $f(T)=C$, we must place an $x$ in some box of diagonal $q$ of $T$. In order for $T$ to be semistandard, we must place this $x$ in the unique box $(i,q\backslash$ of $T$ such that $x$ is strictly greater than all entries of box $(i-1,q+1\backslash$ and weakly less than all entries of box $(i,q+1\backslash$. Surjectivity of $f$ guarantees the existence of such a box $(i,q\backslash$.  Surjectivity of $f$ also ensures that if this procedure is carried out for every box of diagonal $q$ of $C$, every box of diagonal $q$ of $T$ will have at least one number placed inside of it.
\end{proof}

From Lemmas \ref{l.fsurjective} and \ref{l.finjective} we have

\begin{Prop}\label{p.fbijective}
The map $f$ is a bijection from $\ct_{\gl}(\gm)$ to $\ce_{\gl}(\gm)$.
\end{Prop}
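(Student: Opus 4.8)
The plan is to assemble the bijection directly from the two immediately preceding lemmas. Since well-definedness of $f$ as a map $\ct_\gl(\gm)\to\{\text{subsets of }D_\gm\}$ was already verified from the inequalities \eqref{e.restrictedbyv} (every box $(x,x+j-i)$ produced by $f$ genuinely lies in $D_\gm$), what remains is to record that $f$ is both injective and has image exactly $\ce_\gl(\gm)$; a well-defined map that is injective and whose image equals $\ce_\gl(\gm)$ is automatically a bijection onto $\ce_\gl(\gm)$. So I would invoke Lemma \ref{l.fsurjective} for the image and Lemma \ref{l.finjective} for injectivity, and conclude in one line.

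For the surjectivity half, the structural point I would emphasize is that both the source and the target are generated from a single base object by excitations: every $T\in\ct_\gl(\gm)$ is obtained from $T^{\text{top}}$ by a sequence of excitations, and every $C\in\ce_\gl(\gm)$ is obtained from $D_\gl$. Because $f(T^{\text{top}})=D_\gl$ and $f$ intertwines an excitation of a set-valued tableau at an entry $x$ with the corresponding excitation of the subset at the box $f(T|_x)$ (Lemma \ref{l.fcommutesexcitation}), applying $f$ to a generating sequence for $T$ yields a generating sequence for $f(T)$, so the image lands in $\ce_\gl(\gm)$; conversely, lifting a generating sequence for $C$ through $f$ produces a tableau mapping onto $C$.

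For injectivity I would reconstruct $T$ from $C=f(T)$ one diagonal at a time, working from the largest diagonal of $D_\gl$ down to the smallest. On a given diagonal the boxes of $C$ record exactly which rows must be occupied by entries of $T$ on that diagonal, and semistandardness forces each such entry into a unique box (the one lying strictly above the entries of the next box up and weakly below those of the box in the same row, one diagonal out). This pins down $T$ uniquely.

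The main obstacle, and the reason the two lemmas genuinely interact, is that the reconstruction argument for injectivity quietly uses surjectivity of $f$: one needs to know that the box forced by semistandardness always exists and that no box of a diagonal of $T$ is left empty, and that is precisely what Lemma \ref{l.fsurjective} supplies. The real technical heart, though, is Lemma \ref{l.fcommutesexcitation} — the verification that the four local admissibility conditions for an excitation of $T$ at $x$ match, condition for condition, the four local conditions for the corresponding excitation of $f(T)$ at $f(T|_x)$ — and this is where the defining idea of $f$ as \emph{``record the row of each entry''} does all the work.
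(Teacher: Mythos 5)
Your proof is correct and takes essentially the same approach as the paper: the proposition is deduced directly from Lemma \ref{l.fsurjective} (image equals $\ce_{\gl}(\gm)$, established via the excitation-generation argument and Lemma \ref{l.fcommutesexcitation}) together with Lemma \ref{l.finjective} (injectivity, established by the diagonal-by-diagonal reconstruction of $T$ from $f(T)$). Your remark that the injectivity argument quietly relies on surjectivity — to guarantee that the box forced by semistandardness exists and that no box is left empty — matches the paper's own proof exactly.
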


\subsection{Hilbert series and Hilbert polynomials of points on Schubert varieties}\label{ss.hilbseries_An}

In type $A_n$, all of the maximal parabolic subgroups $P_d$ are cominuscule (cf. \cite[9.0.14]{BiLa:00}). Thus Corollary \ref{c.cominusculemult} may be used to compute the Hilbert series, Hilbert polynomial, and multiplicity at $v$ of a Schubert variety $X^w$ in the Grassmannian $\Grass(d,n)$. Let $\gl$ and $\gm$ be the partitions corresponding to $w$ and $v$ respectively. In the present setting, the constant $m_k$ of Corollary \ref{c.cominusculemult} is equal to the number of excited Young diagrams $C\in\ce_{\gl}(\gm)$ such that the number of boxes of $C$ is $k+|\gl|$ (due to Proposition \ref{p.subsequences}(i), Definition \ref{d.energy}(iii), and Lemma \ref{l.hecketoeyd}(ii)). In terms of set-valued tableaux, $m_k$ is equal to the number of $T\in\ct_{\gl}(\gm)$ with $k+|\gl|$ entries.

\begin{example}
Let $n=7$, $d=3$, $w=\{1,3,5,2,4,6,7\}$, and $v=\{4,6,7,1,2,3,5\}$, as in Example \ref{ex.restriction_A}. Then $\gl=\gl_w=(2,1)$, $\gm=\gl_v=(4,4,3)$, $l(w)=|\gl|=3$, and $d_w=d(n-d)-l(w)=9$. The set of excited Young diagrams $\ce_{\gl}(\gm)$ appears in Figure \ref{f.eyd_all}, and the set of set-valued tableaux $\ct_{\gl}(\gm)$ appears in Figure \ref{f.syt_all}. From either of these figures, one reads off $m_0=5$, $m_1=5$, and $m_2=1$.  Hence
{\allowdisplaybreaks
\begin{align*}
    H(X^w,v)(t)&=\frac{5}{(1-t)^9}-\frac{5}{(1-t)^8}+\frac{1}{(1-t)^7},\\
    h(X^w,v)(i)&=5{i +8\choose 8}-5{i+7\choose 7}+{i+6\choose 6},\\
    \mult(X^w,v)&=5.
\end{align*}
}
\end{example}

Several other multiplicity formulas have appeared in the literature: inductive \cite{LaWe:90}, determinantal \cite{RoZe:01}, \cite{WoYo:12}, \cite{LiYo:12}, and enumerative \cite{Kra:01}, \cite{KoRa:03} \cite{Kre:03}, \cite{KrLa:04}, \cite{Kra:05}, \cite{Kre:08}. The inductive formula of Lakshmibai and Weyman, which holds more generally for minuscule $G/P$, was used by Rosenthal and Zelevinsky to prove the determinantal formula, which in turn was used by Krattenthaler to prove the enumerative formula, which counts nonintersecting lattice paths.
The formula given above appeared earlier in \cite{IkNa:09} and \cite{WoYo:12}, and it can also be deduced from \cite{Kre:05} together with the multiplicity formulas of \cite{KoRa:03}, \cite{Kre:03}, \cite{KrLa:04}.

Formulas for the Hilbert series and Hilbert polynomial of $X^w$ at $v$ were obtained by \cite{Kra:05}. They were derived using an expression for the Hilbert function of $X^w$ at $v$ given in \cite{KoRa:03}, \cite{Kre:03}, \cite{KrLa:04}, and \cite{Kre:08}. The formulas given here appeared earlier in \cite{LiYo:12}.  We remark that the formulas of \cite{LiYo:12} apply more generally to covexillary permutations.

\section{Applications to the orthogonal and Lagrangian Grassmannians} \label{s.orthosymplectic}

In types $B_n$, $C_n$, and $D_n$, let $G$ be $\SO_{2n+1}(\C)$, $\Sp_{2n}(\C)$, and $\SO_{2n}(\C)$ respectively.  Each of these groups $G$ is defined to be the subgroup of a general linear group preserving a specified nondegenerate symmetric or skew symmetric inner product (see Appendix \ref{s.appendix_roots}). Let $P_n$ be the maximal parabolic subgroup of $G$ corresponding to simple root $\ga_n$.
\begin{itemize}
    \item[$B_n$:]  The coset space $G/P_n$ is identified with the \textbf{odd orthogonal Grassmannian} $OG(n,2n+1)$, which parametrizes the maximal ($n$ dimensional) isotropic subspaces of $\C^{2n+1}$. It has the structure of an irreducible  projective variety of dimension $n(n+1)/2$.

    \item[$C_n$:] The coset space $G/P_n$ is identified with the \textbf{Lagrangian Grassmannian} $LG(n,2n)$, which parametrizes the maximal ($n$ dimensional) isotropic subspaces of $\C^{2n}$. It has the structure of an irreducible  projective variety of dimension $n(n+1)/2$.

    \item[$D_n$:] The coset space $G/P_n$ is identified with the \textbf{even orthogonal Grassmannian} $OG(n,2n)$, which parametrizes one of the two components of the maximal ($n$-dimensional) isotropic subspaces of $\C^{2n}$.
        The even orthogonal Grassmannian has the structure of an irreducible projective variety of dimension $n(n-1)/2$.
\end{itemize}

 Let $B^-$ be the Borel subgroup of lower triangular matrices in $G$, and $T$ the subgroup of diagonal matrices in $G$. The Weyl group $W=N_G(T)/T$ embeds into
$\{w=(w_1,\ldots,w_{2n})\in S_{2n}\mid w_{\bar{\imath}}=\bar{w_i}\}$, where, for $x\in\{1,\ldots,2n\}$, $\bar{x}=2n+1-x$.
In particular, $w\in W$ is uniquely determined by $w_1,\ldots,w_n$. The Weyl group $W_{P_n}$ is isomorphic to $S_n$, and the set of \textbf{minimal length coset representatives} of $W/W_{P_n}$ is given by
\begin{equation}\label{e.WP}
    W^{{P_n}} = \left\{(w_1,\ldots,w_{2n})\in W\mid w_1<\cdots<w_n \right\}.
\end{equation}
The cosets $w{P_n}$, $w\in W^{P_n}$, are precisely the $T$-fixed points of $G/{P_n}$. By abuse of notation, we sometimes denote $w{P_n}$ by just $w$. The Schubert variety $X^w$ is by definition $\bar{B^-wP_n}\subseteq G/{P_n}$. It is an irreducible projective variety of codimension $l(w)$. For $v,w\in W^{P_n}$, $v\in X^w$ if and only if $v\geq w$ in the Bruhat order.

In this section, we give formulas for $i_v^*[\co_{X^w}]$ (Theorems \ref{t.ktheory_eyd_bcd} and \ref{t.ktheory_syt_bcd}), as well as the Hilbert series, Hilbert polynomial, and multiplicity of $X^w$ at $v$ (Section \ref{ss.hilbseries_BCD}). These are reformulations of Theorem \ref{t.pullback} and Corollary \ref{c.cominusculemult}, expressed in terms of {\it excited shifted Young diagrams} (\cite{Kre:06},
\cite{IkNa:09}) and {\it set-valued shifted tableaux}.
The term excited shifted Young diagram is due to Ikeda and Naruse \cite{IkNa:09}; in \cite{Kre:06}, in which only the case $G=\Sp_{2n}(\C)$ is studied, this is called a subset
of a Young diagram.  Also, in this paper we have modified the definition of excited shifted Young diagram for our applications
to $K$-theory;
the earlier definitions correspond to our {\it reduced} excited shifted Young diagrams. Reduced excited Young diagrams were discovered independently by Kreiman \cite{Kre:06} and Ikeda and Naruse \cite{IkNa:09}.   A related version of excited shifted  Young diagram was introduced in \cite{IkNa:11} (see Section \ref{ss.previous}).

 In type $C_n$, our formula for $i_v^*[\co_{X^w}]$, expressed in terms of set-valued shifted tableaux, was obtained earlier in \cite{Kre:06}. The derivation of the formula there relies on an equivariant Gr\"{o}bner degeneration of $X^w$ in a neighborhood of $v$; this degeneration is due to \cite{GhRa:06}. Our approach is different. As in Section \ref{s.Grassmannian}, our methods in this section are modeled on those of Ikeda and Naruse \cite{IkNa:09}, and in several places, we use their results directly.
Formulas for $i_v^*[\co_{X^w}]$ which were obtained in \cite{IkNa:11} are discussed in Section \ref{ss.previous}.

 \subsection{Strict partitions and shifted Young diagrams}\label{ss.strict_partitions_bcd}
A partition $\gl=(\gl_1,\ldots,\gl_n)\in \cp_{n,n}$ is said to be \textbf{strict} if $\gl_i=\gl_{i+1}$ implies $\gl_{i}=\gl_{i+1}=0$. Let $\cs\cp_{n}$ denote the set of such partitions. A \textbf{shifted Young diagram} is an array of boxes arranged such that the row lengths strictly decrease from top to bottom and the leftmost box of row $i$ lies in column $i$. To a strict partition $\gl$ we associate the shifted Young diagram $D'_\gl$ whose $i$-th row contains $\gl_i$ boxes. Then $D'_{\gl}$ fits in the upper triangular boxes of an $n\times n$ square.

\begin{example} Let $n=6$, and $\gl=(5,4,2,1)\in\cs\cp_{n}$.
The shifted Young diagram $D'_\gl$ fits in the upper triangular boxes of  an $n\times n$ square.
\begin{center}
\begin{tikzpicture}[scale=.4]
\draw (.5,.5) -- ++(6,0) -- ++(0,6) -- ++(-6,0) -- cycle;
\foreach \pos in
      { {(1,6)},{(2,6)},{(3,6)},{(4,6)},{(5,6)},
        {(2,5)},{(3,5)},{(4,5)},{(5,5)},
        {(3,4)},{(4,4)},
        {(4,3)}
      }
          {\draw \pos +(-.5,-.5) rectangle ++(.5,.5);}
\end{tikzpicture}
\end{center}
\end{example}

Define a map $W^{P_n}\to \cs\cp_{n}$, $w\mapsto \gl'_w$, as follows. Given $w\in W^{P_n}$, form the Young diagram $D_{\gl_w}$. By Lemma \ref{l.symmsymm}, $D_{\gl_w}$ is symmetric. In types $B_n$ and $C_n$, remove all boxes $(i,j)$ of $D_{\gl_w}$ such that $i> j$; in type $D_n$, remove all boxes $(i,j)$ of $D_{\gl_w}$ such that $i\geq j$. The resulting shifted Young diagram corresponds to $\gl'_w$. More explicitly,
in types $B_n$ and $C_n$,
\begin{equation}\label{e.glprimewbc}
    (\gl'_w)_i=\max\{(\gl_w)_i-(i-1),0\}, i=1,\ldots,n,
\end{equation}
and in type $D_n$,
\begin{equation}\label{e.glprimewd}
    (\gl'_w)_i=\max\{(\gl_w)_i-i,0\}, i=1,\ldots,n-1.
\end{equation}
As in type $A_n$, $l(w)=|\gl'_w|$.

\begin{example}\label{ex.strict_partitions} Let $n=6$, and $w=(1,4,\bar{6},\bar{5},\bar{3},\bar{2},2,3,5,6,\bar{4},\bar{1})$\\ $=(1,4,7,8,10,11,2,3,5,6,9,12)\in W^{P_n}$.
Then $\gl_w=(5,5,4,4,2)\in \cp_{n,n}$.
\begin{equation*}
\begin{tikzpicture}[scale=.4,
baseline={([yshift=-.5ex]current bounding box.center)}]
\draw (.5,.5) -- ++(6,0) -- ++(0,6) -- ++(-6,0) -- cycle;
\foreach \pos in
      { {(1,6)},{(2,6)},{(3,6)},{(4,6)},{(5,6)},
        {(2,5)},{(3,5)},{(4,5)},{(5,5)},
        {(3,4)},{(4,4)},
        {(4,3)}
      }
          {\draw[fill=gray!20] \pos +(-.5,-.5) rectangle ++(.5,.5);}
\foreach \pos in
      { {(1,5)},
        {(1,4)}, {(2,4)},
        {(1,3)}, {(2,3)},{(3,3)},
        {(1,2)}, {(2,2)}
      }
          {\draw[dashed] \pos +(-.5,-.5) rectangle ++(.5,.5);}
\draw (3,7) node[anchor=south] {$B_n, C_n$};
\end{tikzpicture}
\qquad\qquad\qquad
\begin{tikzpicture}[scale=.4,
baseline={([yshift=-.5ex]current bounding box.center)}]
\draw (.5,.5) -- ++(6,0) -- ++(0,6) -- ++(-6,0) -- cycle;
\foreach \pos in
      {{(2,6)},{(3,6)},{(4,6)},{(5,6)},
        {(3,5)},{(4,5)},{(5,5)},
        {(4,4)}
      }
          {\draw[fill=gray!20] \pos +(-.5,-.5) rectangle ++(.5,.5);}
\foreach \pos in
      { {(1,6)},
        {(1,5)},{(2,5)},
        {(1,4)}, {(2,4)},{(3,4)},
        {(1,3)}, {(2,3)}, {(3,3)}, {(4,3)},
        {(1,2)}, {(2,2)}
      }
          {\draw[dashed] \pos +(-.5,-.5) rectangle ++(.5,.5);}
\draw (3,7) node[anchor=south] {$D_n$};
\end{tikzpicture}
\end{equation*}

\noindent In types $B_n$ and $C_n$, $\gl'_w=(5,4,2,1)$, and in type $D_n$, $\gl'_w=(4,3,1)$. These are the strict partitions obtained by removing the unshaded boxes from $D_{\gl_w}$ in the diagrams above.
\end{example}

In types $B_n$ and $C_n$, the map $W^{P_n}\to \cs\cp_{n}$ is a bijection, and in type $D_n$, the map $W^{P_n}\to \cs\cp_{n-1}$ is a bijection. The assertion in type $D_n$  follows from the fact that both $W^{P_n}\to Q$ and $Q\to \cs\cp_{n-1}$ are bijections, where $Q$ is the set of symmetric partitions with at most $n$ rows and an even number of boxes of the form $(i,i)$ (i.e., lying on diagonal 0). The map $W^{P_n}\to Q$, $v\mapsto \gl_v$, is well defined and bijective by Lemma \ref{l.symmsymm} and \eqref{e.Weyl_Dn}. The map $Q\to \cs\cp_{n-1}$, $\gl_v\mapsto \gl'_v$, is bijective by straightforward combinatorial arguments.
In types $B_n$ and $C_n$, the proof is similar but simpler.

Thus $W^{P_n}$ in type $D_n$ and $W^{P_{n-1}}$ in type $B_{n-1}$ are both in bijection with $\cs\cp_{n-1}$. We identify the element of $W^{P_{n}}$ in type $D_{n}$ with the element of $W^{P_{n-1}}$ in type $B_{n-1}$ which corresponds to the same strict partition of $\cs\cp_{n-1}$. This in turn gives an identification between the fixed points of $\OG(n,2n)$ and of $\OG(n-1,2n-1)$.

\begin{example}\label{ex.fixedpts_bijection_BD}
    In type $D_n$, with $n=6$, let $w=(1,4,\bar{6},\bar{5},\bar{3},\bar{2},2,3,5,6,\bar{4},\bar{1})$ as in Example \ref{ex.strict_partitions}. Then $\gl_w=(5,5,4,4,2)\in \cp_{n,n}$ and $\gl'_w=(4,3,1)\in\cs\cp_{n-1}$. Let $\gl=\gl'_w$. In type $B_{n-1}$, $\gl\in\cs\cp_{n-1}$ corresponds to $(4,4,3,2)\in \cp_{n-1,n-1}$, which corresponds to $u=(1,4,\bar{5},\bar{3},\bar{2},2,3,5,\bar{4},\bar{1})\in W^{P_{n-1}}$.
        \begin{center}
        \begin{tikzpicture}[scale=.4,
        baseline={([yshift=-.5ex]current bounding box.center)}]
        \draw (.5,.5) -- ++(5,0) -- ++(0,5) -- ++(-5,0) -- cycle;
        \foreach \pos in
              { {(1,5)},{(2,5)},{(3,5)},{(4,5)},
                {(2,4)},{(3,4)},{(4,4)},
                {(3,3)}
              }
                  {\draw[fill=gray!20] \pos +(-.5,-.5) rectangle ++(.5,.5);}
        \foreach \pos in
              { {(1,4)},
                {(1,3)}, {(2,3)},
                {(1,2)}, {(2,2)}
              }
                  {\draw[dashed] \pos +(-.5,-.5) rectangle ++(.5,.5);}
        \draw (3,6) node[anchor=south] {$B_{n-1}$};
        \end{tikzpicture}
        \end{center}
    Thus we identify the Weyl group element $w$ in type $D_n$ with $u$ in type $B_{n-1}$, and we identify the point $w$ in $\OG(n,2n)$ with $u$ in $\OG(n-1,2n-1)$.
\end{example}

    Another identification between $W^{P_n}$ in type $D_n$ and $W^{P_{n-1}}$ in type $B_{n-1}$ appears in \cite[1.3]{RaUp:10}. Given $w=(w_1,\ldots,w_{2n})\in W^{P_n}$ in type $D_n$, merely remove $n$ and $\bar{n}$ from the entries $w_1,\ldots,w_{2n}$ in order to produce element $u$ of $B_{n-1}$. (In Example \ref{ex.fixedpts_bijection_BD}, $u$ can be obtained from $w$ by removing $6$ and $\bar{6}$ from the entries $w_1,\ldots,w_{12}$.)
    In the other direction, suppose we are given $u=(u_1,\ldots,u_{2n-2})\in W^{P_{n-1}}$ in type $B_{n-1}$. Add $n$ and $\bar{n}$ to the entries of $u$ in such a way that the new permutation is symmetric, and the first $n$ of its entries have an even number of barred entries and are increasing.

For the remainder of this section, we prove that this identification is the same as ours. Let $\gl=(\gl_1,\ldots,\gl_{n-1})\in\cs\cp_{n-1}$. Let $\gl=\gl'_w$ for $w=(w_1,\ldots,w_{2n})\in W^{P_n}$ in type $D_n$, and let $\gl=\gl'_u$ for $u=(u_1,\ldots,u_{2n-2})\in W^{P_{n-1}}$ in type $B_{n-1}$. We wish to show that $u$ is obtained by removing $n$ and $\bar{n}$ from the entries of $w$. This is equivalent to the assertion $\{w_1,\ldots,w_n\}\setminus \{n,\bar{n}\}=\{u_1,\ldots,u_{n-1}\}$.

\begin{Lem}\label{l.partsbd} For $1\leq i\leq n-1$,
\begin{align*}
    & B_{n-1}:\quad u_{n-i}\geq \bar{n-1}\text{ if and only if }\gl_i>0,\text{ in which case }u_{n-i}=\bar{n-\gl_i} \\
    & D_n:\quad w_{n+1-i}\geq \bar{n-1}\text{ if and only if }\gl_i>0,\text{ in which case }w_{n+1-i}=\bar{n-\gl_i}
\end{align*}
\end{Lem}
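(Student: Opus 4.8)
The plan is to prove both equivalences by directly unwinding the definitions, handling the two types in parallel. In each case $\gl_i$ is by construction a coordinate of the strict partition obtained from the ordinary partition $\gl_w$ (resp.\ $\gl_u$), and that ordinary partition is read off from the permutation entries through $(\gl_w)_i=w_{d+1-i}-(d+1-i)$ as in \eqref{e.lambdav}. Thus the statement collapses to a short chain of elementary inequalities; the only genuine content is keeping track of the two different conventions for the bar operation and the two different column-shift formulas \eqref{e.glprimewd} and \eqref{e.glprimewbc}.

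I would treat the $D_n$ case first. Here $w\in W^{P_n}$ lies in $S_{2n}$, so $\overline{x}=2n+1-x$ and $(\gl_w)_i=w_{n+1-i}-(n+1-i)$. By \eqref{e.glprimewd}, $\gl_i=\max\{(\gl_w)_i-i,0\}$, so
\begin{equation*}
\gl_i>0 \iff (\gl_w)_i>i \iff w_{n+1-i}>n+1 \iff w_{n+1-i}\ge n+2=\overline{n-1},
\end{equation*}
the last step using that the entries are integers. When this holds, $\gl_i=(\gl_w)_i-i=w_{n+1-i}-(n+1)$, hence $w_{n+1-i}=n+1+\gl_i=\overline{n-\gl_i}$, as claimed.

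The $B_{n-1}$ case runs identically, with the ranks shifted from $n$ to $n-1$. Now $u$ lies in $S_{2n-2}$, so $\overline{x}=2n-1-x$ and $(\gl_u)_i=u_{n-i}-(n-i)$; using the $B$/$C$ shift \eqref{e.glprimewbc} gives $\gl_i=\max\{(\gl_u)_i-(i-1),0\}$. The same three-step reduction yields $\gl_i>0\iff u_{n-i}\ge n=\overline{n-1}$, and in that case $u_{n-i}=n-1+\gl_i=\overline{n-\gl_i}$.

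The one place where care is needed---and essentially the only place an error could enter---is that the bar operation is $2n+1-x$ in type $D_n$ but $2n-1-x$ in type $B_{n-1}$, while the column shift is by $i$ in the first formula and by $i-1$ in the second. These two offsets compensate for one another so that both chains terminate at $\overline{n-1}$ and produce the value $\overline{n-\gl_i}$ with matching form. I would verify this alignment by the explicit computation above rather than by asserting a symmetry between the two cases.
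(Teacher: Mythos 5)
Your proof is correct and follows essentially the same route as the paper: both unwind \eqref{e.lambdav} together with the shift formulas \eqref{e.glprimewd}, \eqref{e.glprimewbc} and the two bar conventions to reduce the claim to an elementary chain of integer inequalities. The only cosmetic difference is that the paper works the $D_n$ case via the identity $(\gl_w)_i-i=w_{n+1-i}-\bar{n}$ and declares $B_{n-1}$ "similar," whereas you carry out both cases explicitly, which is if anything slightly more complete.
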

\noindent Note that in type $B_{n-1}$, $\bar{x}=2n-1-x$, whereas in type $D_{n}$, $\bar{x}=2n+1-x$.
\begin{proof}
We prove this for type $D_n$. The proof for type $B_{n-1}$ is similar.
By \eqref{e.lambdav}, for $i=1,\ldots,n$, $(\gl_w)_i=w_{n+1-i}-(n+1-i)$. Thus \begin{equation}\label{e.lem_technical}
    (\gl_w)_i-i=w_{n+1-i}-(n+1)=w_{n+1-i}-\bar{n}.
 \end{equation}
 Thus $w_{n+1-i}\geq \bar{n-1}$ if and only if $w_{n+1-i}>\bar{n}$ if and only if $(\gl_w)_i>i$ if and only if $\gl_i=(\gl'_w)_i>0$, by \eqref{e.glprimewd}. In this case $(\gl_w)_i-i=(\gl'_w)_i=\gl_i$, so \eqref{e.lem_technical} implies $w_{n+1-i}=\gl_i+\bar{n}=\bar{n-\gl_i}$.
\end{proof}

Recall for $x=1,\ldots,n$, exactly one of $x$ or $\bar{x}$ occurs among $w_1,\ldots,w_n$, and for $y=1,\ldots,n-1$, exactly one of $y$ or $\bar{y}$ occurs among $u_1,\ldots,u_{n-1}$. In particular exactly one of $n$ or $\bar{n}$ occurs among $w_1,\ldots,w_n$. If one excludes this entry, then Lemma \ref{l.partsbd} implies that the barred entries of $w_1,\ldots,w_n$ and of $u_1,\ldots,u_{n-1}$ are the same, and hence the unbarred entries must be the same as well. This completes the proof.

\subsection{The orthogonal Grassmannians}\label{ss.orthog_grassmannians} In this section we show that the restriction formula in type $B_n$ can be obtained from the restriction formula in type $D_{n+1}$. 

Let $S$ and $T$ be the set of diagonal matrices in $\SO_{2n+2}(\C)$ and $\SO_{2n+1}(\C)$ respectively. The $(n+1)$-st diagonal entry of any element of $T$ must be 1, and the map $\diag(t_1,\ldots,t_{2n+1})\mapsto \diag(t_1,\ldots,t_{n},1,1,t_{n+2},\ldots,t_{2n+1})$ embeds $T$ as a closed subgroup of $S$. Let $Y=\OG(n+1,2n+2)$ and $X=\OG(n,2n+1)$.   There is a $T$-equivariant isomorphism $\pi:Y\to X$ (cf. \cite[1.3]{RaUp:10}).
Hence
    $\pi^*:K_T(Y)\to K_T(X)$ is an isomorphism.
The sets $\{T$-fixed points of $Y\}$, $\{T$-fixed points of $X\}$, and $\{S$-fixed points of $Y\}$ can be identified. The identification of the first two sets follows from the $T$-equivariance of $\pi$, and the identification of the last two sets is described in Section \ref{ss.strict_partitions_bcd}. In fact, these two identifications are the same (cf. \cite[1.3]{RaUp:10}).

If $S$ acts on any scheme $M$, then there is a natural restriction homomorphism\\ $\res:K_S(M)\to K_T(M)$ taking the class of an
$S$-equivariant sheaf $\cf$ to the class of the same sheaf, but viewed as equivariant with respect to the $T$ action. If $w$ is any $S$-fixed point (equivalently, $T$-fixed point) of $Y$, then the square and triangle in the diagram
\begin{center}
\begin{tikzpicture}[scale=.6]
[description/.style={fill=white,inner sep=2pt}] \matrix (m) [matrix
  of math nodes, row sep=3.5em, column sep=3.5em, text
  height=1.5ex, text depth=0.25ex]
      { K_S(Y) & K_T(Y) & K_T(X)\\
      K_S(w)& K_T(w) & \\};
      \path[>=stealth,->]
          (m-1-1) edge node[auto] {$\res$} (m-1-2)
          (m-1-3) edge node[auto,swap] {$\pi^*$} node[auto] {$\sim$} (m-1-2)
          (m-2-1) edge node[auto,swap] {$\res$} (m-2-2)
          (m-1-1) edge node[auto,swap] {$i_w^*$} (m-2-1)
          (m-1-2) edge node[auto,swap] {$i_w^*$} (m-2-2)
          (m-1-3) edge node[auto] {$i_w^*$} (m-2-2);
\end{tikzpicture}
\end{center}
commute. Let $\fs^*$ and $\ft^*$ be the duals of the Lie algebras of $S$ and $T$ respectively, with bases as given in Appendix
\ref{s.appendix_roots}. Define $f:\fs^*\to \ft^*$ by $\gre_i\mapsto \gre_i$, $i=1,\ldots,n$, $\gre_{n+1}\mapsto 0$.  Then $\res:K_S(w)\to K_T(w)$ is the homomorphism defined by $e^{\gm}\mapsto e^{f(\gm)}$.

The isomorphism $\pi$ identifies Schubert varieties of $Y$ with Schubert varieties of $X$ (cf. \cite[1.3]{RaUp:10}).
We shall denote a Schubert variety in $Y$ as $Y^w$ and $\pi(Y^w)$ as $X^w$.
\begin{Prop}\label{p.BfromD_restriction} Let $w$ denote both a $T$-fixed point of $X$ and the corresponding $S$-fixed point of $Y$. Then $i_w^*[X^w]=\res(i_w^*[Y^w])$.
\end{Prop}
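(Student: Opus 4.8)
The plan is to reduce the proposition to the single identity
$$
\pi^*[X^w] = \res[Y^w]
$$
in $K_T(Y)$, and then to read off the result by chasing the commutative diagram above. Throughout, $[X^w]$ and $[Y^w]$ denote the structure sheaf classes $[\co_{X^w}]$ and $[\co_{Y^w}]$.

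First I would establish the displayed identity. Since $\pi$ is a $T$-equivariant isomorphism with $\pi(Y^w)=X^w$, we have $\pi^{-1}(X^w)=Y^w$, so $\pi^*\co_{X^w}=\co_{Y^w}$ and hence $\pi^*[X^w]=[\co_{Y^w}]$ in $K_T(Y)$, where $\co_{Y^w}$ carries the $T$-equivariant structure pulled back from the canonical structure on $\co_{X^w}$. On the other hand, $Y^w$ is $S$-stable, so $\co_{Y^w}$ carries a canonical $S$-equivariant structure, and $\res[Y^w]$ is the class of this same sheaf regarded as $T$-equivariant via the embedding $T\hookrightarrow S$. The key point is that these two $T$-equivariant structures on $\co_{Y^w}$ coincide: both are the canonical $T$-equivariant structure attached to the $T$-stable subscheme $Y^w\subset Y$, since the pullback of the canonical structure under a $T$-equivariant isomorphism is again canonical, and the restriction of the canonical $S$-structure to $T$ is the canonical $T$-structure. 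This gives $\pi^*[X^w]=\res[Y^w]$ in $K_T(Y)$.

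With the identity in hand, the proposition follows from the diagram. Because the fixed point $w$ of $Y$ maps to $w$ in $X$, the inclusion $\{w\}\hookrightarrow X$ factors through $\pi$, so the triangle commutes and $i_w^*[X^w]=i_w^*\pi^*[X^w]$. Commutativity of the square expresses the compatibility of $\res$ with the maps $i_w^*$, giving $i_w^*\res[Y^w]=\res(i_w^*[Y^w])$. Combining these with the identity $\pi^*[X^w]=\res[Y^w]$ yields
$$
i_w^*[X^w]=i_w^*\pi^*[X^w]=i_w^*\res[Y^w]=\res(i_w^*[Y^w]),
$$
as desired.

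I expect the main obstacle to be the verification that the two $T$-equivariant structures on $\co_{Y^w}$ agree, that is, the identity $\pi^*[X^w]=\res[Y^w]$ itself; this is the only step requiring genuine bookkeeping about equivariant structures, and once it is settled the remainder is a formal consequence of the commutativity of the square and triangle.
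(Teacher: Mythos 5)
Your proof is correct and follows essentially the same route as the paper: both establish $\pi^*[\co_{X^w}]=[\co_{Y^w}]=\res[\co_{Y^w}]$ in $K_T(Y)$ (the paper states these two identities tersely, while you spell out why the two $T$-equivariant structures on $\co_{Y^w}$ agree) and then conclude by chasing the commutative square and triangle. No substantive difference in approach.
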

\begin{proof} We have $\res[Y_w]=[Y_w]$, where the first $[Y_w]$ lies in $K_S(Y)$ and the second in $K_T(Y)$. Further, $\pi^*[X^w]=[\pi^{-1}(X^w)]=[Y^w]$. Hence
 $
    \res (i_w^*[Y^w])=(i_w^*\circ (\pi^*)^{-1}\circ \res)[Y^w]=i_w^*[X^w].
$
\end{proof}

\subsection{Restriction formula in terms of excited shifted Young diagrams}
\label{ss.eyd_bcd}
Suppose that  $\gl$ is a strict partition and $S$ is a subset of $D'_\gl$. As in type $A_n$, an excitation moves or adds a box to $S$. Let $(i,j)\in S$. If $i<j$, then there are two possible excitations based on $(i,j)$, and these are the same as in type $A_n$:
if $(i+1,j),(i,j+1),(i+1,j+1)\in D_\gl\setminus S$, then  excitation of type 1 replaces $S$ by $S\setminus (i,j)\cup (i+1,j+1)$, and an excitation of type 2 replaces $S$ by $S\cup (i+1,j+1)$. If $i=j$, then the excitations are described as follows:
\begin{itemize}
\item[$B_n$, $C_n$:] If $(i,i+1), (i+1,i+1)\in D'_\gl\setminus S$,
    then an excitation of type 1 replaces $S$ by $S\setminus (i,i)\cup (i+1,i+1)$, and an excitation of type 2 replaces $S$ by $S\cup (i+1,i+1)$.

\item[$D_{n}$:] If $(i,i+1),(i+1,i+1), (i+1,i+2), (i+2,i+2)\in D'_\gl\setminus S$, then an excitation of type 1 replaces $S$ by $S\setminus (i,i)\cup (i+2,i+2)$, and an excitation of type 2 replaces $S$ by $S\cup (i+2,i+2)$.
\end{itemize}

 Let $\gl,\gm\in \cs\cp_{n}$. If $\gl_i\leq \gm_i$, $i=1,\ldots,n$, the map $(i,j)\mapsto (i,j)$ embeds $D'_{\gl}$ as a subset of $D'_{\gm}$. An \textbf{excited shifted Young diagram} of $D'_\gl$ in $D'_\gm$ is a subset of $D'_\gm$ which can be obtained by applying a sequence of excitations to the subset $D'_\gl$. An excited shifted Young diagram is said to be \textbf{reduced} if it can be obtained by applying only type 1 excitations. 
 Denote the set of excited shifted Young diagrams of $D'_\gl$ in $D'_\gm$ by $\ce'_\gl(\gm)$, and the set of reduced excited shifted Young diagrams by $\ce'^{\text{red}}_\gl(\gm)$. 

\begin{figure}[height=30em]
    \centering
    \begin{tikzpicture}[
    level distance = 3cm,sibling distance = 2cm, grow = east,
    type2/.style={edge from parent/.style={draw,double distance =
    1.5pt,>=latex,->}},
    type1/.style={edge from parent/.style={draw,>=latex,->}},
    typ1/.style={>=latex,->},
    typ2/.style={double distance=1.5pt,>=latex,->}
]
\newcommand{\eyd}[4]
    {
        \begin{tikzpicture}[scale=.4]
            \foreach \pos in
            {{#1},{#2},{#3},{#4}}
                {\draw[fill=blue!30] \pos +(-.5,-.5) rectangle ++(.5,.5);}
             \foreach \pos in
              { {(1,3)},{(2,3)},{(3,3)},{(4,3)},
                {(2,2)},{(3,2)},
                {(3,1)}
              }
                  {\draw \pos +(-.5,-.5) rectangle ++(.5,.5);}
        \end{tikzpicture}
      }

\node (e1) [rectangle] {\eyd{(1,3)}{(2,3)}{(2,2)}{(2,2)}}
    child[type1] {node (e2) [rectangle] {\eyd{(1,3)}{(2,3)}{(3,1)}{(3,1)}}
        child[type1] {node (e4) [rectangle] {\eyd{(1,3)}{(3,2)}{(3,1)}{(3,1)}}
            child[type1] {node (e6) [rectangle] {\eyd{(2,2)}{(3,2)}{(3,1)}{(3,1)}}}
            child[type2] {node (e7) [rectangle] {\eyd{(1,3)}{(2,2)}{(3,2)}{(3,1)}}}
        }
        child[type2] {node (e5) [rectangle] {\eyd{(1,3)}{(2,3)}{(3,1)}{(3,2)}}}
    }
    child[type2] {node (e3) [rectangle] {\eyd{(1,3)}{(2,3)}{(2,2)}{(3,1)}}};
    \draw[rounded corners = 5pt, dashed]
        (e1.north west) -- (e1.north east)
        -- (e6.north east) -- (e6.south east)
        -- (e6.south west) -- (e1.south west)
        -- cycle;
        \draw[typ2] (e1) -- (e3);
        \draw[typ2] (e2) -- (e5);
        \draw[typ2] (e4) -- (e7);
\end{tikzpicture}
    \caption{\label{f.eyd_allbc} The excited shifted Young diagrams $\ce'_\gl(\gm)$ in types $B_n$ and $C_n$, for $n=4$, $\gm=(4,2,1)$, $\gl=(2,1)$. The dashed line encloses $\ce'^{\text{red}}_\gl(\gm)$.}
\end{figure}

\begin{figure}[height=30em]
    \centering
    \input{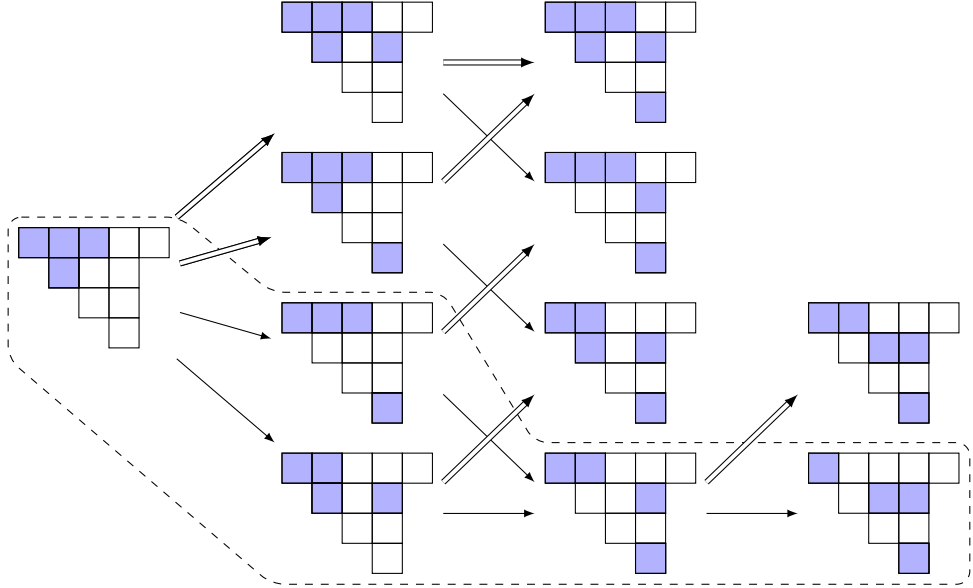}
    \caption{\label{f.eyd_alld} The excited shifted Young diagrams $\ce'_\gl(\gm)$ in type $D_n$, for $n=6$, $\gm=(5,3,2,1)$, $\gl=(3,1)$. The dashed line encloses $\ce'^{\text{red}}_\gl(\gm)$.}
\end{figure}

\begin{Thm}\label{t.ktheory_eyd_bcd}
Let $w\leq v\in W^{P_n}$, and let $\gl$, $\gm$ be the corresponding strict partitions. 
{\allowdisplaybreaks
\begin{align*}
    & B_n:\quad i_v^*[\co_{X^w}]=(-1)^{l(w)}\sum_{C\in\ce'_{\gl}(\gm)}\prod_{(i,j)\in C}\left(e^{-2^{-\delta_{ij}}(\gre_{v_{n+i}}+\gre_{v_{n+j}})}-1\right)\\
    & C_n:\quad i_v^*[\co_{X^w}]=(-1)^{l(w)}\sum_{C\in\ce'_{\gl}(\gm)}\prod_{(i,j)\in C}\left(e^{-(\gre_{v_{n+i}}+\gre_{v_{n+j}})}-1\right)\\
    & D_{n}:\quad i_v^*[\co_{X^w}]=(-1)^{l(w)}\sum_{C\in\ce'_{\gl}(\gm)}\prod_{(i,j)\in C}\left(e^{-(\gre_{v_{n+i}}+\gre_{v_{n+j+1}})}-1\right)
\end{align*}
}
where for $1\leq m\leq n$, $\gre_{\bar{m}}$ is defined to equal $-\gre_m$.
\end{Thm}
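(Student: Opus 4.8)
The plan is to follow the template established for type $A_n$ in Section \ref{s.Grassmannian}: express everything through the $0$-Hecke pullback formula (Theorem \ref{t.pullback}) and then reinterpret its indexing set $T(w,\vs_v)$ and its roots $r(c)$ in the language of excited shifted Young diagrams. For each type I would first fix a reflection-valued filling of $D'_\gm$, placing in each box the appropriate simple reflection (with the special node $s_n$ — the long root in $C_n$, the short root in $B_n$, or a spinor node in $D_n$ — occurring along the diagonal $i=j$), and read the entries off bottom row first and right to left to obtain a word $\vs_v$ for $v$. Because $v\in W^{P_n}$ is fully commutative in all of these types (as noted in the introduction), this word is automatically reduced and the reflections lying near the diagonal obey exactly the commutation relations the later induction requires. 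The filling and the resulting root computation are essentially those used by Ikeda and Naruse \cite{IkNa:09} for equivariant cohomology, so I would import their root computation (the shifted analogue of Proposition \ref{p.subsequences}(ii)) and their reduced-diagram version of the correspondence, then extend the latter from nil-Coxeter to $0$-Hecke exactly as in type $A_n$.

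The core of the argument is the shifted analogue of Proposition \ref{p.subsequences}(i): for $C\subseteq D'_\gm$ with associated subsequence $\vs_C$, one has $H_{\vs_C}=H_w$ if and only if $C\in\ce'_\gl(\gm)$. I would establish this by the three-stage induction of Lemma \ref{l.containmenttwo} — first with both energies zero, then with the type-$2$ energy zero, then in general — using Lemma \ref{l.reflectionsametimes} and Proposition \ref{p.statistic2} for fully commutative $w$ just as in type $A_n$. The one new ingredient is a shifted version of the sliding Lemma \ref{l.modifysubsequence}, asserting that two boxes carrying the same reflection may be swapped, or one of them adjoined, without changing $H_{\vs_C}$, provided no intervening box of $C$ carries a reflection failing to commute with it. Pinning down precisely which intervening diagonals must be avoided, and matching this to the type-specific excitations — the diagonal type-$1$ move sends $(i,i)\mapsto(i+1,i+1)$ in $B_n,C_n$ but $(i,i)\mapsto(i+2,i+2)$ in $D_n$ — is where the special node enters, via $m(s_{n-1},s_n)=4$ in $B_n,C_n$ and via the branch point of the $D_n$ diagram. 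I expect this diagonal bookkeeping to be the main obstacle; away from the diagonal it is identical to type $A_n$. With the correspondence and the root formula in hand, substituting into Theorem \ref{t.pullback} yields the $C_n$ and $D_n$ formulas verbatim.

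For type $B_n$ I would not argue directly but instead invoke Proposition \ref{p.BfromD_restriction}, which under the identification of fixed points of $\OG(n,2n+1)$ with those of $\OG(n+1,2n+2)$ gives $i_w^*[X^w]=\res(i_w^*[Y^w])$, with $\res$ acting by $\gre_{n+1}\mapsto 0$ and $e^\gm\mapsto e^{f(\gm)}$. Applying $\res$ to the already-proved type $D_{n+1}$ product, I would track each factor $e^{-(\gre_{v_{(n+1)+i}}+\gre_{v_{(n+1)+j+1}})}-1$ through the identification of strict partitions from Section \ref{ss.strict_partitions_bcd}, using Lemma \ref{l.partsbd}. Here the genuine subtlety, beyond routine reindexing, is that the type $D_{n+1}$ excited shifted diagrams obey the skip-by-two diagonal rule while the target $B_n$ formula is phrased with the skip-by-one rule; reconciling the two conventions, and checking that the $\res$-specialization of a diagonal factor (where one of the two $\gre$'s is sent to zero) is exactly what produces the halving factor $2^{-\delta_{ij}}$ together with the convention $\gre_{\bar m}=-\gre_m$, is the delicate point of this last step.
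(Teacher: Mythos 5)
For types $C_n$ and $D_n$ your plan is essentially the paper's proof. The paper fills $D'_\gm$ with the reflection-valued tableau $T'_\gm$ (alternating $s_n$ and $s_{n-1}$ on the diagonal in type $D_n$), notes that the type $A_n$ proof of Proposition \ref{p.subsequences}(i) carries over with very minor modifications to give Proposition \ref{p.subsequences_bcd}(i), and proves the root formula (ii) by exactly the case analysis you outline (the paper carries out this computation itself rather than importing it, since its expressions for $r(c)$ differ from those of \cite{IkNa:09}); substituting into Theorem \ref{t.pullback} then finishes these two cases.

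The gap is in type $B_n$. The paper does not deduce the $B_n$ formula from $D_{n+1}$; it proves it directly, in parallel with $C_n$: the Weyl groups of $B_n$ and $C_n$ coincide, as do all simple roots except $\ga_n$, so Proposition \ref{p.subsequences_bcd}(i) and the off-diagonal root computations are literally the same as in type $C_n$, and the only new computation is the diagonal one, where $\ga_n=\gre_n$ gives $r(c)=\gre_x=2^{-1}(\gre_x+\gre_x)$ --- this is the sole source of the factor $2^{-\delta_{ij}}$. Your route through Proposition \ref{p.BfromD_restriction} does produce a correct element of $R(T)$, namely the $D_{n+1}$ sum with every $\gre_{n+1}$ set to $0$, but that is not the formula asserted in the theorem. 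The theorem's $B_n$ formula is a sum over the $B$-type excited diagrams $\ce'_{\gl}(\gm)$ (diagonal excitations skip by one row), whereas the restricted $D_{n+1}$ formula is a sum over $D$-type excited diagrams (diagonal excitations skip by two rows). These index sets have different cardinalities in general --- the remark following Theorem \ref{t.ktheory_eyd_bcd} notes that the $D_{n+1}$ method involves fewer diagrams, and states that from the combinatorics alone it is not clear that the two expressions agree --- so there is no term-by-term matching, and what you call ``reconciling the two conventions'' is not bookkeeping but a nontrivial identity between two genuinely different sums. Proving that identity is exactly the difficulty your approach must overcome, and it is what the paper's direct argument bypasses. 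Note also that your expectation that the specialization $\gre_{n+1}\mapsto 0$ ``produces'' the factor $2^{-\delta_{ij}}$ is misplaced: that factor records that the $B_n$ simple root is $\gre_n$ rather than $2\gre_n$, which only the direct diagonal computation sees. The repair is easy: run your $C_n$ argument verbatim for $B_n$ (same tableau, same combinatorial correspondence) and redo only the diagonal case of the root computation.
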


\begin{Rem}
We now have two methods for computing $i_v^*[\co_{X^w}]$ in type $B_n$. We can either employ the above formula for type $B_n$, or we can invoke Proposition \ref{p.BfromD_restriction}: first use the formula for type $D_{n+1}$ and then set every $\gre_{n+1}$ to 0. In general, the two methods produce different expressions for $i_v^*[\co_{X^w}]$. From the combinatorics alone it is not clear that these two expressions result in the same value. In general, the latter method is computationally simpler because it involves fewer excited Young diagrams. We illustrate both methods in the examples.
\end{Rem}

\begin{example}\label{ex.restriction_BC} 
Let $n=4$.
Let $v=\{2,\bar{4},\bar{3},\bar{1},1,3,4,\bar{2}\}$, $w=\{1,2,\bar{4},\bar{3},3,4,\bar{2},\bar{1}\}$ in type $C_n$ or $B_n$.
Then $\gl_v=(4,3,3,1)$, $\gm=\gl'_v=(4,2,1)$, $\gl_w=(2,2)$, and $\gl=\gl'_w=(2,1)$. Thus $l(w)=|\gl'_w|=3$. The set of excited shifted Young diagrams $\ce'_{\gl}(\gm)$ appears in Figure \ref{f.eyd_allbc}.
For any $C\in\ce'_{\gl}(\gm)$ and $(i,j)\in C$, a simple method for finding the indices $v_{n+i}$ and $v_{n+j}$ appearing in Theorem \ref{t.ktheory_eyd_bcd} is to label the rows of $C$, from top to bottom, as well as the columns, from left to right, with the entries $v_{n+1},v_{n+2},\ldots$ of $v$ (where $\bar{x}$ is replaced by $-x$). Then $v_{n+i}$ and $v_{n+j}$ are the row $i$ and column $j$ labels respectively. For example, for
\begin{equation*}
C=\
\begin{tikzpicture}[scale=.6,every node/.style={scale=1},
    baseline={([yshift=-.5ex]current bounding box.center)}]
    \foreach \pos in
    {{(1,3)},{(2,3)},{(3,1)}}
        {\draw[fill=blue!30] \pos +(-.5,-.5) rectangle ++(.5,.5);}
     \foreach \pos in
      { {(1,3)},{(2,3)},{(3,3)},{(4,3)},
        {(2,2)},{(3,2)},
        {(3,1)}
      }
          {\draw \pos +(-.5,-.5) rectangle ++(.5,.5);}
    \draw  (0,1) node{$4$};  \draw  (0,2) node{$3$};  \draw  (0,3) node{$1$};
    \draw  (1,4) node{$1$}; \draw  (2,4) node{$3$}; \draw  (3,4) node{$4$};
    \draw  (4,4) node{$-2$};
\end{tikzpicture}
\in\ce'_{\gl}(\gm),
\end{equation*}
in type $B_n$,
\begin{align*}
&\prod_{(i,j)\in C}\Big(e^{-2^{-\gd_{ij}}(\gre_{v_{n+i}}+\gre_{v_{n+j}})}-1\Big)
=\Big(e^{-\gre_1}-1\Big)\Big(e^{-(\gre_1+\gre_3)}-1\Big)
\Big(e^{-\gre_4}-1\Big),
\intertext{and in type $C_n$,}
&\prod_{(i,j)\in C}\Big(e^{-(\gre_{v_{n+i}}+\gre_{v_{n+j}})}-1\Big)
=\Big(e^{-2\gre_1}-1\Big)\Big(e^{-(\gre_1+\gre_3)}-1\Big)
\Big(e^{-2\gre_4}-1\Big).
\end{align*}
Invoking Theorem \ref{t.ktheory_eyd_bcd}, we obtain, in type $B_n$,
{\allowdisplaybreaks
\begin{align*}
    i_v^*[\co_{X^w}]=&
    -\Big(e^{-\gre_1}-1\Big)\Big(e^{-(\gre_1+\gre_3)}-1\Big)
    \Big(e^{-\gre_3}-1\Big)
    -\Big(e^{-\gre_1}-1\Big)\Big(e^{-(\gre_1+\gre_3)}-1\Big)
    \Big(e^{-\gre_4}-1\Big)\\
    &-\Big(e^{-\gre_1}-1\Big)\Big(e^{-(\gre_3+\gre_4)}-1\Big)
    \Big(e^{-\gre_4}-1\Big)
    -\Big(e^{-\gre_3}-1\Big)\Big(e^{-(\gre_3+\gre_4)}-1\Big)
    \Big(e^{-\gre_4}-1\Big)\\
    &-\Big(e^{-\gre_1}-1\Big)\Big(e^{-(\gre_1+\gre_3)}-1\Big)
    \Big(e^{-\gre_3}-1\Big)\Big(e^{-\gre_4}-1\Big)\\
    &-\Big(e^{-\gre_1}-1\Big)\Big(e^{-(\gre_1+\gre_3)}-1\Big)
    \Big(e^{-(\gre_3+\gre_4)}-1\Big)\Big(e^{-\gre_4}-1\Big)\\
    &-\Big(e^{-\gre_1}-1\Big)\Big(e^{-\gre_3}-1\Big)
    \Big(e^{-(\gre_3+\gre_4)}-1\Big)\Big(e^{-\gre_4}-1\Big),
\intertext{and in type $C_n$,}
    i_v^*[\co_{X^w}]=&
    -\Big(e^{-2\gre_1}-1\Big)\Big(e^{-(\gre_1+\gre_3)}-1\Big)
    \Big(e^{-2\gre_3}-1\Big)
    -\Big(e^{-2\gre_1}-1\Big)\Big(e^{-(\gre_1+\gre_3)}-1\Big)
    \Big(e^{-2\gre_4}-1\Big)\\
    &-\Big(e^{-2\gre_1}-1\Big)\Big(e^{-(\gre_3+\gre_4)}-1\Big)
    \Big(e^{-2\gre_4}-1\Big)
    -\Big(e^{-2\gre_3}-1\Big)\Big(e^{-(\gre_3+\gre_4)}-1\Big)
    \Big(e^{-2\gre_4}-1\Big)\\
    &-\Big(e^{-2\gre_1}-1\Big)\Big(e^{-(\gre_1+\gre_3)}-1\Big)
    \Big(e^{-2\gre_3}-1\Big)\Big(e^{-2\gre_4}-1\Big)\\
    &-\Big(e^{-2\gre_1}-1\Big)\Big(e^{-(\gre_1+\gre_3)}-1\Big)
    \Big(e^{-(\gre_3+\gre_4)}-1\Big)\Big(e^{-2\gre_4}-1\Big)\\
    &-\Big(e^{-2\gre_1}-1\Big)\Big(e^{-2\gre_3}-1\Big)
    \Big(e^{-(\gre_3+\gre_4)}-1\Big)\Big(e^{-2\gre_4}-1\Big).
    \end{align*}
}
\end{example}

\begin{example}\label{ex.restriction_D} 
Let $n=6$. In type $D_n$, let $v=\{2,6,\bar{5},\bar{4},\bar{3},\bar{1},1,3,4,5,\bar{6},\bar{2}\}$ and $w=\{1,2,4,6,\bar{5},\bar{3},3,5,\bar{6},\bar{4},\bar{2},\bar{1}\}$. Then
$\gl_v=(6,5,5,5,4,1)$, $\gm=\gl'_v=(5,3,2,1)$, $\gl_w=(4,3,2,1)$, and $\gl=\gl'_w=(3,1)$. Thus $l(w)=|\gl'_w|=4$. The set of excited shifted Young diagrams $\ce'_{\gl}(\gm)$ appears in Figure \ref{f.eyd_alld}. For any $C\in\ce'_{\gl}(\gm)$ and $(i,j)\in C$, a simple method of finding the indices $v_{n+i}$ and $v_{n+j+1}$ of Theorem \ref{t.ktheory_eyd_bcd} is to label the rows of $C$, from top to bottom, with the numbers $v_{n+1},v_{n+2},\ldots$, and the columns, from left to right, with the numbers $v_{n+2},v_{n+3},\ldots$. For both row and column labels, $\bar{x}$ is replaced by $-x$. Then $v_{n+i}$ and $v_{n+j+1}$ are the row $i$ and column $j$ labels respectively. For example, for
\begin{equation*}
C=\
\begin{tikzpicture}[scale=.6,every node/.style={scale=1},
    baseline={([yshift=-.5ex]current bounding box.center)}]
     \foreach \pos in
        {{(1,4)},{(2,4)},{(4,3)},{(4,1)}}
            {\draw[fill=blue!30] \pos +(-.5,-.5) rectangle ++(.5,.5);}
     \foreach \pos in
          { {(1,4)},{(2,4)},{(3,4)},{(4,4)},{(5,4)},
            {(2,3)},{(3,3)},{(4,3)},
            {(3,2)},{(4,2)},
            {(4,1)}
          }
     {\draw \pos +(-.5,-.5) rectangle ++(.5,.5);}
    \draw  (0,1) node{$5$};  \draw  (0,2) node{$4$};  \draw  (0,3) node{$3$}; \draw  (0,4) node{$1$};
    \draw  (1,5) node{$3$}; \draw  (2,5) node{$4$}; \draw  (3,5) node{$5$};
    \draw  (4,5) node{$-6$}; \draw  (5,5) node{$-2$};
\end{tikzpicture}
\in\ce'_{\gl}(\gm),
\end{equation*}
we have
\[
\prod_{(i,j)\in C}\left(e^{-(\gre_{v_{n+i}}+\gre_{v_{n+j+1}})}-1\right)
=\left(e^{-(\gre_1+\gre_3)}-1\right)\left(e^{-(\gre_1+\gre_4)}-1\right)
\left(e^{-(\gre_3-\gre_6)}-1\right)\left(e^{-(\gre_5-\gre_6)}-1\right).
\]
By Theorem \ref{t.ktheory_eyd_bcd},
{\allowdisplaybreaks
\begin{align*}
    i_v^*[\co_{X^w}]&
    =
    \left(e^{-(\gre_1+\gre_3)}-1\right)\left(e^{-(\gre_1+\gre_4)}-1\right)
    \left(e^{-(\gre_1+\gre_5)}-1\right)\left(e^{-(\gre_3+\gre_4)}-1\right)\\
    &+\left(e^{-(\gre_1+\gre_3)}-1\right)\left(e^{-(\gre_1+\gre_4)}-1\right)
    \left(e^{-(\gre_1+\gre_5)}-1\right)\left(e^{-(\gre_5-\gre_6)}-1\right)\\
    &+\left(e^{-(\gre_1+\gre_3)}-1\right)\left(e^{-(\gre_1+\gre_4)}-1\right)
    \left(e^{-(\gre_3-\gre_6)}-1\right)\left(e^{-(\gre_3+\gre_4)}-1\right)\\
    &+\left(e^{-(\gre_1+\gre_3)}-1\right)\left(e^{-(\gre_1+\gre_4)}-1\right)
    \left(e^{-(\gre_3-\gre_6)}-1\right)\left(e^{-(\gre_5-\gre_6)}-1\right)\\
    &+\left(e^{-(\gre_1+\gre_3)}-1\right)\left(e^{-(\gre_3+\gre_5)}-1\right)
    \left(e^{-(\gre_3-\gre_6)}-1\right)\left(e^{-(\gre_5-\gre_6)}-1\right)\\
    &+\left(e^{-(\gre_1+\gre_3)}-1\right)\left(e^{-(\gre_1+\gre_4)}-1\right)
    \left(e^{-(\gre_1+\gre_5)}-1\right)\left(e^{-(\gre_3-\gre_6)}-1\right)
    \left(e^{-(\gre_3+\gre_4)}-1\right)\\
    &+\left(e^{-(\gre_1+\gre_3)}-1\right)\left(e^{-(\gre_1+\gre_4)}-1\right)
    \left(e^{-(\gre_1+\gre_5)}-1\right)  \left(e^{-(\gre_3+\gre_4)}-1\right)\left(e^{-(\gre_5-\gre_6)}-1\right)\\
    &+\left(e^{-(\gre_1+\gre_3)}-1\right)\left(e^{-(\gre_1+\gre_4)}-1\right)
    \left(e^{-(\gre_1+\gre_5)}-1\right)\left(e^{-(\gre_3-\gre_6)}-1\right)
    \left(e^{-(\gre_5-\gre_6)}-1\right)\\
    &+\left(e^{-(\gre_1+\gre_3)}-1\right)\left(e^{-(\gre_1+\gre_4)}-1\right)
    \left(e^{-(\gre_3-\gre_6)}-1\right)  \left(e^{-(\gre_3+\gre_4)}-1\right)\left(e^{-(\gre_5-\gre_6)}-1\right)\\
    &+\left(e^{-(\gre_1+\gre_3)}-1\right)\left(e^{-(\gre_1+\gre_4)}-1\right)
    \left(e^{-(\gre_3+\gre_5)}-1\right)
    \left(e^{-(\gre_3-\gre_6)}-1\right)\left(e^{-(\gre_5-\gre_6)}-1\right)\\
    &+\left(e^{-(\gre_1+\gre_3)}-1\right)\left(e^{-(\gre_1+\gre_4)}-1\right)
    \left(e^{-(\gre_1+\gre_5)}-1\right)\left(e^{-(\gre_3-\gre_6)}-1\right)
    \left(e^{-(\gre_3+\gre_4)}-1\right)\left(e^{-(\gre_5-\gre_6)}-1\right)
\end{align*}
}
\end{example}

\begin{example}
    Let $n=5$. In type $B_n$, let $v=\{2,\bar{5},\bar{4},\bar{3},\bar{1},1,3,4,5,\bar{2}\}$ and $w=\{1,2,4,\bar{5},\bar{3},3,5,\bar{4},\bar{2},\bar{1}\}$. Then $v$ and $w$ are identified with the corresponding elements of $W^{P_{n+1}}$ in Example \ref{ex.restriction_D} (see Section \ref{ss.strict_partitions_bcd}). By Proposition \ref{p.BfromD_restriction}, $i_v^*[\co_{X^w}]$ can be obtained from the same expression in Example \ref{ex.restriction_D} by replacing each $\gre_6$ by 0.
\end{example}

Theorem \ref{t.ktheory_eyd_bcd} is a reformulation of Theorem \ref{t.pullback}, in which the indexing set $T(w,\vs)$ and integer $r(c)$ of the latter theorem are expressed in terms of excited shifted Young diagrams.  These replacements are described explicitly in Proposition \ref{p.subsequences_bcd}.

Let $w,v\in W^{P_n}$, and let $\gl=\gl'_w$ and $\gm=\gl'_v$ be the corresponding strict partitions.
Form a reflection-valued shifted tableau $T'_{\gm}$ as follows:
\begin{description}
    \item[$B_n$, $C_n$] Fill each box $(i,j)$ of $D'_{\gm}$ with the reflection $s_{n+i-j}$.

    \item[$D_{n}$] Fill each box $(i,i)$ of $D'_{\gm}$ with $s_{n}$ if $i$ is odd or $s_{n-1}$ if $i$ is even; fill each box $(i,j)$, $i<j$, with $s_{n+i-(j+1)}$.

\end{description}
Then $v=s_{i_1}\cdots s_{i_l}$, where $s_{i_1},\ldots, s_{i_l}$ are the entries of $T'_{\gm}$ read from right to left, beginning with the bottom row, then the next row up, etc. This decomposition is reduced. To any subset $C$ of $D'_{\gm}$, form the subsequence $\vs_C=(s_{j_1},\ldots,s_{j_q})$ of $(s_{i_1},\ldots,s_{i_l})$ whose entries lie in the set $C$ of boxes of $T'_{\gm}$. If $C$ and $D$ are different subsets of $D'_\gm$, then we regard $\vs_C$ and $\vs_D$ as different subsequences of $(s_{i_1},\ldots,s_{i_l})$, even if they have the same entries.

 \begin{example}\label{ex.reduceddecompbc}
Let $n=6$. For $v=\{3,6,\bar{5},\bar{4},\bar{2},\bar{1},1,2,4,5,\bar{6},\bar{3}\}\in W^{P_n}$ in $C_n$ and\\ $v=\{3,6,\bar{5},\bar{4},\bar{2},\bar{1},7,1,2,4,5,\bar{6},\bar{3}\}\in W^{P_n}$ in $B_n$, $\gl_v=(6,6,5,5,4,2)$, $\gm=\gl'_v=(6,5,3,2)$,
 \begin{equation*}
T'_{\gm}=
\begin{tikzpicture}[
    scale=.5,
    every node/.style={scale=.8},
    baseline={([yshift=-.5ex]current bounding box.center)}]
         \foreach \pos / \label in
          { {(1,4)}/{s_6},{(2,4)}/{s_5},{(3,4)}/{s_4},{(4,4)}/{s_3},{(5,4)}/{s_2},{(6,4)}/{s_1},
            {(2,3)}/{s_6},{(3,3)}/{s_5},{(4,3)}/{s_4},{(5,3)}/{s_3},{(6,3)}/{s_2},
            {(3,2)}/{s_6},{(4,2)}/{s_5},{(5,2)}/{s_4},
            {(4,1)}/{s_6},{(5,1)}/{s_5}
          }
              {
                \draw \pos +(-.5,-.5) rectangle ++(.5,.5);
                \draw \pos node{$\label$};
              }
\end{tikzpicture}
\end{equation*}
and $s_5s_6s_4s_5s_6s_2s_3s_4s_5s_6s_1s_2s_3s_4s_5s_6$ is a reduced decomposition for $v$.
\end{example}

 \begin{example}\label{ex.reduceddecompd}
 Let $n=7$. For $v=\{3,6,7,\bar{5},\bar{4},\bar{2},\bar{1},1,2,4,5,\bar{7},\bar{6},\bar{3}\}\in W^{P_n}$ in $D_n$,\\ $\gl_v=(7,7,6,6,4,4,2)$ and $\gm=\gl'_v=(6,5,3,2)$,
 \begin{equation*}
T'_{\gm}=
\begin{tikzpicture}[
    scale=.5,
    every node/.style={scale=.8},
    baseline={([yshift=-.5ex]current bounding box.center)}]
         \foreach \pos / \label in
          { {(1,4)}/{s_7},{(2,4)}/{s_5},{(3,4)}/{s_4},{(4,4)}/{s_3},{(5,4)}/{s_2},{(6,4)}/{s_1},
            {(2,3)}/{s_6},{(3,3)}/{s_5},{(4,3)}/{s_4},{(5,3)}/{s_3},{(6,3)}/{s_2},
            {(3,2)}/{s_7},{(4,2)}/{s_5},{(5,2)}/{s_4},
            {(4,1)}/{s_6},{(5,1)}/{s_5}
          }
              {
                \draw \pos +(-.5,-.5) rectangle ++(.5,.5);
                \draw \pos node{$\label$};
              }
\end{tikzpicture}
\end{equation*}
and $s_5s_6s_4s_5s_7s_2s_3s_4s_5s_6s_1s_2s_3s_4s_5s_7$ is a reduced decomposition for $v$.
\end{example}

\begin{Prop}\label{p.subsequences_bcd}
Denote the reduced  decomposition  $(s_{i_1},\cdots, s_{i_l})$ for $v$ obtained above by $\vs_v$. By definition,
$T(w,\vs_v)=\{\vs_C\mid C\subseteq D'_\gm, H_{\vs_C}=H_w\}$. We have
\begin{itemize}
    \item[(i)] $T(w,\vs_v)=\{\vs_C\mid C\in\ce'_{\gl}(\gm)\}$.

    \item[(ii)] Let $(i,j)$ be the box of $T'_{\gm}$ containing $s_{i_c}$. Define $\gre_{\bar{m}}=-\gre_m$ for $1\leq m\leq n$. Then
            {\allowdisplaybreaks
            \begin{equation*}
            \begin{split}
                & B_n:\quad r(c)=2^{-\gd_{ij}}(\gre_{v_{n+i+1}} +\gre_{v_{n+j+1}})\\
                & C_n:\quad r(c)=\gre_{v_{n+i}} +\gre_{v_{n+j}}\\
                & D_n:\quad r(c)=\gre_{v_{n+i}} +\gre_{v_{n+j+1}}
            \end{split}
            \end{equation*}
            }
\end{itemize}
\end{Prop}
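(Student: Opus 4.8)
The plan is to mirror the proof of Proposition \ref{p.subsequences} in the type $A_n$ case, adapting each step to the shifted setting, and to treat (i) and (ii) separately: (i) is the combinatorial identification $H_{\vs_C} = H_w \iff C \in \ce'_\gl(\gm)$, and (ii) is a direct root computation. As a preliminary I would record that every $v \in W^{P_n}$ is fully commutative in the sense of Stembridge \cite{Ste:96}; this holds in all of the cominuscule cases and, after the identification of Section \ref{ss.orthog_grassmannians}, in the odd orthogonal case as well, so that the results of Section \ref{s.nilHecke}—in particular Proposition \ref{p.statistic2}, Lemma \ref{l.reflectionsametimes}, and the invariance of the two-letter subwords $w(s,t)$—are available.

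For part (i), the key tool is the shifted analog of Lemma \ref{l.modifysubsequence}: two boxes on a common diagonal $k$ of $T'_\gm$ carry the same reflection, and boxes on diagonals at distance at least two carry commuting reflections, so an occupied box may be slid down its diagonal, or a new box added, without changing $H_{\vs_C}$, provided the diagonals $k-1, k, k+1$ carry no intervening boxes. The novelty is confined to the main diagonal: in types $B_n$ and $C_n$ every diagonal box carries $s_n$, while in type $D_n$ the diagonal alternates between $s_n$ and $s_{n-1}$, and the type $D_n$ excitation jumps two diagonals precisely because $s_{n-1}$ and $s_n$ commute. I would verify the modify-subsequence statement near the fork of the Dynkin diagram by direct inspection of the relevant commutation and $0$-Hecke relations. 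Granting this, the reverse containment (a single excitation preserves $H_{\vs_C}$) is immediate, as in Lemma \ref{l.containmentone}, and the forward containment follows by the same three-case induction as in Lemma \ref{l.containmenttwo}: Case 1 ($e_2 = 0$, $e_1 = 0$) forces $C = D'_\gl$ via Lemma \ref{l.reflectionsametimes}; Case 2 ($e_2 = 0$, $e_1$ arbitrary) strips one type 1 excitation using the equalities $\vs_C(s_k, s_{k\pm 1}) = \vs_{D'_\gl}(s_k, s_{k\pm 1})$; and Case 3 (general) strips one type 2 excitation using Proposition \ref{p.statistic2}, with the energies of Definition \ref{d.energy} and Lemma \ref{l.hecketoeyd} carried over verbatim.

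For part (ii), I would compute $r(c) = s_{i_1} \cdots s_{i_{c-1}}(\ga_{i_c})$ by pushing $\ga_{i_c}$ leftward through the reflections, exactly as in the type $A_n$ computation. Away from the main diagonal the calculation is identical and produces a root whose row and column indices are read off the labeling of $T'_\gm$; the new phenomenon is the passage across the main diagonal, where reflection by $s_n$ (types $B_n$, $C_n$) or by the pair $s_{n-1}, s_n$ (type $D_n$) converts a difference $\gre_a - \gre_b$ into a sum $\gre_a + \gre_b$ of the ambient indices. The short/long root distinction then accounts for the remaining bookkeeping: the long simple root $\ga_n = 2\gre_n$ in type $C_n$ yields the doubled diagonal weight $\gre_{v_{n+i}} + \gre_{v_{n+i}} = 2\gre_{v_{n+i}}$, the short simple root $\ga_n = \gre_n$ in type $B_n$ produces the factor $2^{-\gd_{ij}}$, and in type $D_n$ the two-tipped fork gives $\gre_{v_{n+i}} + \gre_{v_{n+j+1}}$.

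The main obstacle will be the behavior on and immediately above the main diagonal, and especially the type $D_n$ case. There the alternation $s_n, s_{n-1}$ along the diagonal, the two-diagonal excitation rule, and the fork relations (so that $s_{n-1}$ and $s_n$ commute while each fails to commute with $s_{n-2}$) must all be reconciled with the $0$-Hecke identities used in the modify-subsequence lemma and with the root computation of (ii). As a cross-check in type $B_n$, one can compare the outcome with the value obtained from type $D_{n+1}$ via Proposition \ref{p.BfromD_restriction}; consistency of the two routes, which a priori give different-looking expressions as noted after Theorem \ref{t.ktheory_eyd_bcd}, provides independent confirmation of the diagonal bookkeeping.
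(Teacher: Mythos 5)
Your proposal is correct and follows essentially the same route as the paper: part (i) is handled exactly as the paper indicates (the type $A_n$ argument of Proposition \ref{p.subsequences} carries over with minor modifications, the only novelty being the main-diagonal behavior and, in type $D_n$, the commutation $s_{n-1}s_n = s_ns_{n-1}$ underlying the two-step diagonal excitation, which you correctly isolate), and part (ii) matches the paper's case-by-case computation of $r(c)$ by pushing $\ga_{i_c}$ through the preceding reflections, with the crossing of the main diagonal converting differences $\gre_a - \gre_b$ into sums and the short/long root distinction producing the factor $2^{-\gd_{ij}}$ in type $B_n$ and the doubled weight in type $C_n$. In fact you supply more detail for part (i) than the paper itself, which omits it.
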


The reduced decomposition $\vs_v$ is due to Ikeda and Naruse \cite{IkNa:09}. Proposition \ref{p.subsequences_bcd}(ii) is as well, although our expressions for the constants $r(c)$ are different than theirs. A version of
 Proposition \ref{p.subsequences_bcd}(i) which involves the nil-Coxeter algebra and (what we call) reduced excited Young diagrams is also proved in \cite{IkNa:09}.

Proposition \ref{p.subsequences_bcd} is the counterpart for types $B_n$, $C_n$, and $D_n$ of Proposition \ref{p.subsequences}. The proof of part (i) carries over with very minor modifications. We omit the details. We prove part (ii) below.

 \begin{Def}\label{d.energy_bcd}
Let $C$ be a subset of $D_\gm$ such that $H_{\vs_C}=H_w$.
Define
\begin{enumerate}
    \item $|C|=$ number of boxes of $C$

    \item
        $B_n$, $C_n$: $e_1(C)=(1/2)(\sum_{(i,j)\in C}(i+j)-\sum_{(i,j)\in D_\gl}(i+j))$\\
        $D_n$: $e_1(C)=(1/2)(\sum_{(i,j)\in C, i<j}(i+j)+\sum_{(i,i)\in C}i-\sum_{(i,j)\in D_\gl, i<j}(i+j)-\sum_{(i,i)\in D_\gl}i)$


    \item $e_2(C)=|C|-|D'_{\gl}|=|C|-|\gl|$
\end{enumerate}
We call $e_1(C)$ and $e_2(C)$ the \textbf{type 1 energy} and \textbf{type 2 energy} of $C$ respectively.
\end{Def}


\subsubsection{Proof of Proposition \ref{p.subsequences_bcd}(ii)}

\textbf{1. Type $C_n$}.
Recall that entry of box $(l,m)$ of $T'_{\gl_v}$ is $s_{n+l-m}$.
By \eqref{e.lambdav},
the rightmost box of row $i$ lies in column $v_{n+1-i}-(n+1-i)$; by Lemma \ref{l.trans_partition}, the lowest box of column $j$, assuming that this box lies to the right of the `descending staircase', lies in row $n+j-v_{n+j}$. The entries of these two boxes in $T'_{\gm}$ are $s_a$ and $s_b$ respectively, where
\begin{align}
    a&=n+i-(v_{n+1-i}-(n+1-i))=2n+1-v_{n+1-i}=\bar{v_{n+1-i}}=v_{n+i}.\label{e.rightboxc}\\
    b&= n+((n+j)-v_{n+j})-j=(2n+1)-v_{n+j}-1=\bar{v_{n+j}}-1.\label{e.bottomboxc}
\end{align}
We consider three cases: $i<j$ and
$(j,j)\in D'_{\gm}$, $i<j$ and $(j,j)\not\in D'_{\gm}$, and $i=j$.

\vspace{1em}

\noindent {\em Case 1. $i<j$ and $(j,j)\in D'_{\gm}$}. Let $s_x, s_y$ denote the reflections which lie in the rightmost box of rows $i$ and $j$ respectively.
Figure \ref{f.Case1_bc} shows some of the entries of $T'_{\gm}$.
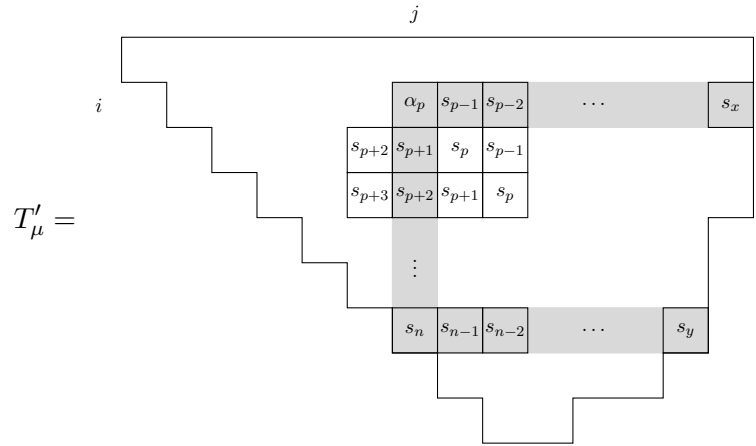
\begin{figure}[htbp]
\begin{equation*}
T'_{\gm}=\,
    \begin{tikzpicture}[xscale=.6,yscale=.6,
    baseline={([yshift=-.5ex]current bounding box.center)},
    every node/.style={scale=.7}]
         \draw  (0,8) node{$i$};
         \draw  (7,10) node{$j$};
          \draw[gray!30,fill=gray!30] (14.5,8.5) -- ++(-8,0) -- ++(0,-6) -- ++(7,0) -- ++(0,1) -- ++(-6,0) --  ++(0,4) -- ++(7,0) -- cycle;
         \foreach \pos / \label in
          { {(7,8)}/{\ga_p},{(8,8)}/{s_{p-1}},{(9,8)}/{s_{p-2}},{(14,8)}/{s_{x}},
            {(6,7)}/{s_{p+2}},{(7,7)}/{s_{p+1}},{(8,7)}/{s_{p}},{(9,7)}/{s_{p-1}},
            {(6,6)}/{s_{p+3}},{(7,6)}/{s_{p+2}},{(8,6)}/{s_{p+1}},{(9,6)}/{s_{p}},
            {(7,3)}/{s_n},{(8,3)}/{s_{n-1}},{(9,3)}/{s_{n-2}},{(13,3)}/{s_y}
          }
              {
                \draw \pos +(-.5,-.5) rectangle ++(.5,.5);
                \draw \pos node{$\label$};
              }

        \draw (0.5,9.5)
            -- ++(0,-1) -- ++(1,0)
            -- ++(0,-1) -- ++(1,0)
            -- ++(0,-1) -- ++(1,0)
            -- ++(0,-1) -- ++(1,0)
            -- ++(0,-1) -- ++(1,0)
            -- ++(0,-1) -- ++(1,0)
            -- ++(0,-1) -- ++(1,0)
            -- ++(0,-1) -- ++(1,0)
            -- ++(0,-1) -- ++(1,0)
            -- ++(1,0) -- ++(0,1)
            -- ++(2,0) -- ++(0,1)
            -- ++(1,0) -- ++(0,3)
            -- ++(1,0) -- ++ (0,4)
            -- cycle;
        \draw (7,4.5) node {$\vdots$};
        \draw (11,3) node {$\cdots$};
        \draw (11,8) node {$\cdots$};

    \end{tikzpicture}
\end{equation*}
\caption{\label{f.Case1_bc} Type $C_n$, Case 1}
\end{figure}
In the expression $r(c)=s_{i_1}s_{i_2}\cdots s_{i_{c-1}}(\ga_{i_c})=s_{i_1}s_{i_2}\cdots s_{p-2}s_{p-1}(\gre_p-\gre_{p+1})$, the reflections $s_{i_j}$ which lie outside of the shaded boxes can be removed. We have
{\allowdisplaybreaks
\begin{align*}
    r(c)
    &= s_y\cdots s_{n-2}s_{n-1}s_n\cdots s_{p+2} s_{p+1} s_x\cdots s_{p-2} s_{p-1} (\gre_p-\gre_{p+1})\\
    &= s_y\cdots s_{n-2}s_{n-1}s_n\cdots s_{p+2} s_{p+1} s_x\cdots s_{p-2} (\gre_{p-1}-\gre_{p+1})\\
    &= s_y\cdots s_{n-2}s_{n-1}s_n\cdots s_{p+2} s_{p+1} s_x\cdots (\gre_{p-2}-\gre_{p+1})\\
    &= s_y\cdots s_{n-2}s_{n-1}s_n\cdots s_{p+2} s_{p+1} (\gre_{x}-\gre_{p+1})\\
    &= s_y\cdots s_{n-2}s_{n-1}s_n\cdots s_{p+2} (\gre_{x}-\gre_{p+2})\\
    &= s_y\cdots s_{n-2}s_{n-1}s_n\cdots (\gre_{x}-\gre_{p+3})\\
    &= s_y\cdots s_{n-2}s_{n-1}s_n(\gre_{x}-\gre_{n})\\
    &= s_y\cdots s_{n-2}s_{n-1}(\gre_{x}+\gre_{n})\\
    &= s_y\cdots s_{n-2}(\gre_{x}+\gre_{n-1})\\
    &= s_y\cdots (\gre_{x}+\gre_{n-2})
    = \gre_{x}+\gre_{y}=\gre_{v_{n+i}}+\gre_{v_{n+j}},
\end{align*}
}
where the last equality is due to \eqref{e.rightboxc}.

\noindent {\em Case 2.  $i<j$ and $(j,j)\not\in D'_{\gm}$}.
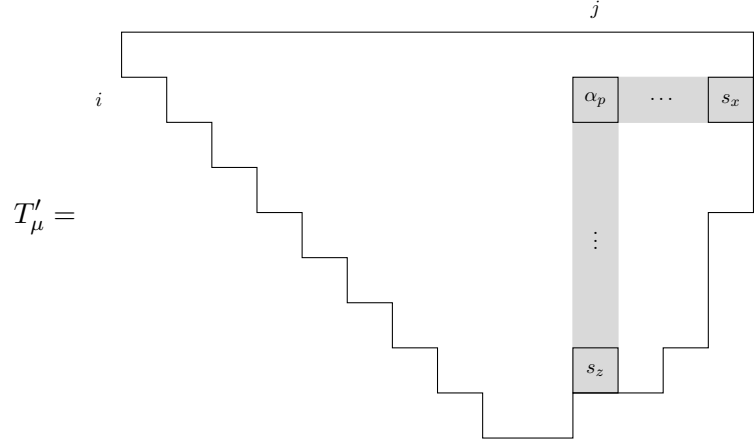
\begin{figure}[htbp]
\begin{equation*}
T'_{\gm}=\,
    \begin{tikzpicture}[xscale=.6,yscale=.6,
    baseline={([yshift=-.5ex]current bounding box.center)},
    every node/.style={scale=.7}]
         \draw  (0,8) node{$i$};
         \draw  (11,10) node{$j$};
          \draw[gray!30,fill=gray!30] (14.5,8.5) -- ++(-4,0) -- ++(0,-7) -- ++(1,0) -- ++(0,6) -- ++(3,0) --  cycle;
         \foreach \pos / \label in
          { {(14,8)}/{s_{x}}, {(11,2)}/{s_z}, {(11,8)}/{\ga_p}
          }
              {
                \draw \pos +(-.5,-.5) rectangle ++(.5,.5);
                \draw \pos node{$\label$};
              }

        \draw (0.5,9.5)
            -- ++(0,-1) -- ++(1,0)
            -- ++(0,-1) -- ++(1,0)
            -- ++(0,-1) -- ++(1,0)
            -- ++(0,-1) -- ++(1,0)
            -- ++(0,-1) -- ++(1,0)
            -- ++(0,-1) -- ++(1,0)
            -- ++(0,-1) -- ++(1,0)
            -- ++(0,-1) -- ++(1,0)
            -- ++(0,-1) -- ++(1,0)
            -- ++(1,0) -- ++(0,1)
            -- ++(2,0) -- ++(0,1)
            -- ++(1,0) -- ++(0,3)
            -- ++(1,0) -- ++ (0,4)
            -- cycle;
        \draw (11,5) node {$\vdots$};
        \draw (12.5,8) node {$\cdots$};

    \end{tikzpicture}
\end{equation*}
\caption{\label{f.Case2_bc}Type $C_n$, Case 2}
\end{figure}
As in the above calculation, reflections lying outside of the shaded region can be removed from the expression $r(c)=s_{i_1}s_{i_2}\cdots s_{i_{c-1}}(\ga_{i_c})$, where $\ga_{i_c}=\ga_{p}=\gre_p-\gre_{p+1}$. Let $s_x, s_z$ be the reflections which lie in the rightmost box of row $i$ and bottom box of column $j$ respectively. One checks that  $r(c)=\gre_{x}-\gre_{z+1}=\gre_{v_{n+i}}-\gre_{\bar{v_{n+j}}}=\gre_{v_{n+i}}+\gre_{v_{n+j}}$,
using \eqref{e.rightboxc} and \eqref{e.bottomboxc}.

\vspace{1em}

\noindent {\em Case 3: $i=j$}. In this case, $r(c)
    = \cdots s_{x}\cdots s_{n-2}s_{n-1}(2\gre_n)=2\gre_x=\gre_{v_{n+i}}+\gre_{v_{n+i}}$.

\vspace{1em}
\noindent\textbf{2. Type $B_n$}.
The Weyl group in type $B_n$ is the same as in type $C_n$, and the simple roots other than $\ga_n$ are identical as well. It follows that if $i<j$, then $r(c)$ is the same value computed in type $C_n$. If $i=j$, then $ r(c) = \cdots s_{x}\cdots s_{n-2}s_{n-1}(\gre_n)=\gre_x=2^{-1}(\gre_x+\gre_x)$.
 \vspace{1em}

\noindent\textbf{3. Type $D_n$}.

Recall that for each element $v\in W^{P_n}$, $D'_{\gm}=D'_{\gl'_v}$ is formed by removing all boxes $(i,j)$ of $D_{\gl_v}$ such that $i\geq j$. The column number of any box in $D'_{\gl'_v}$ is one less than the column number of the corresponding box of $D_{\gl_v}$. Thus in order to `place' a box in $D'_{\gl'_v}$ into its appropriate box in $D_{\gl_v}$, one must add one to its column index.

Recall also that for $l<m$ and for $l=m$, $l$ even, the entry of box $(l,m)$ of $T'_{\gl_v}$ is $s_{n+l-(m+1)}$.
By \eqref{e.lambdav},
the rightmost box of row $i$ lies in column $v_{n+1-i}-(n+1-i)$ of $D_{\gl_v}$, and thus in column $v_{n+1-i}-(n+1-i)-1$ of $D'_{\gl'_v}$; by Lemma \ref{l.trans_partition}, the lowest box of column $j$, assuming that this box lies to the right of the `descending staircase', lies in row $n+j+1-v_{n+j+1}$. The entries of these two boxes in $T'_\gm$ are $s_a$ and $s_b$ respectively, where
\begin{align}
    a&=n+i-(v_{n+1-i}-(n+1-i)-1+1)=2n+1-v_{n+1-i}=\bar{v_{n+1-i}}=v_{n+i}\label{e.rightboxd}\\
    b&=n+((n+j+1)-v_{n+j+1})-(j+1)=(2n+1)-v_{n+j+1}-1=\bar{v_{n+j+1}}-1\label{e.bottomboxd}
\end{align}
We consider four possibilities for $(i,j)\in D'_{\gl'_v}$:
$i<j$, $(j+1,j+1)\in D'_{\gl'_v}$, $j$ odd; $i<j$, $(j+1,j+1)\in D'_{\gl'_v}$, $j$ even; $i<j$, $(j+1,j+1)\not\in D'_{\gl'_v}$; and $i=j$.

\vspace{1em}

\noindent {\em Cases 1 and 2: $i<j$ and $(j+1,j+1)\in D'_{\gm}$}.
 Let $s_x, s_y$ denote the reflections which lie in the rightmost box of row $i$ and row $j+1$ respectively. Reflections $s_{i_k}$ not lying in the shaded boxes of Figure \ref{f.Case1_d} can be removed from the expression $r(c)=s_{i_1}s_{i_2}\cdots s_{i_{c-1}}(\ga_{i_c})$, where $\ga_{i_c}=\ga_{p}=\gre_p-\gre_{p+1}$.
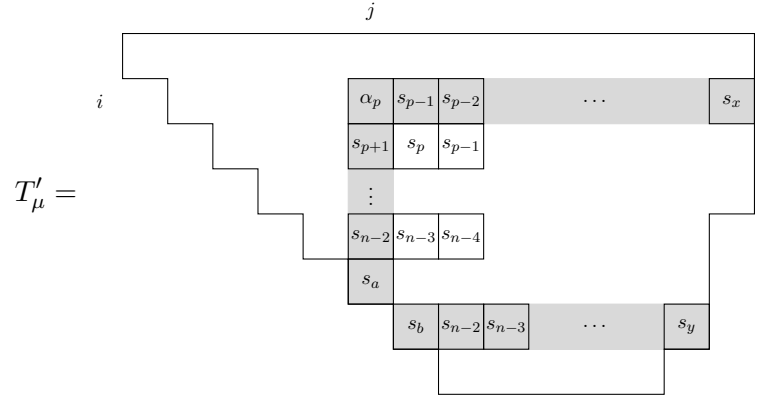
\begin{figure}[htbp]
\begin{equation*}
T'_{\gm}=\,
    \begin{tikzpicture}[xscale=.6,yscale=.6,
    baseline={([yshift=-.5ex]current bounding box.center)},
    every node/.style={scale=.7}]
         \draw  (0,7) node{$i$};
         \draw  (6,9) node{$j$};
          \draw[gray!30,fill=gray!30] (14.5,7.5) -- ++(-9,0) -- ++(0,-5) -- ++(1,0) -- ++(0,4) -- ++(8,0) -- cycle;
         \draw[gray!30,fill=gray!30] (6.5,2.5) -- ++(7,0) -- ++(0,-1) -- ++(-7,0) -- cycle;
         \foreach \pos / \label in
          { {(6,7)}/{\ga_p},{(7,7)}/{s_{p-1}},{(8,7)}/{s_{p-2}},{(14,7)}/{s_{x}},
            {(6,6)}/{s_{p+1}},{(7,6)}/{s_{p}},{(8,6)}/{s_{p-1}},
            {(6,4)}/{s_{n-2}},{(7,4)}/{s_{n-3}},{(8,4)}/{s_{n-4}},
            {(6,3)}/{s_{a}},
            {(7,2)}/{s_b},{(8,2)}/{s_{n-2}},{(9,2)}/{s_{n-3}},{(13,2)}/{s_y}
          }
              {
                \draw \pos +(-.5,-.5) rectangle ++(.5,.5);
                \draw \pos node{$\label$};
              }
        \draw (6,5) node{\vdots};
        \draw (11,7) node{$\cdots$};
        \draw (11,2) node{$\cdots$};

        \draw (0.5,8.5)
            -- ++(0,-1) -- ++(1,0)
            -- ++(0,-1) -- ++(1,0)
            -- ++(0,-1) -- ++(1,0)
            -- ++(0,-1) -- ++(1,0)
            -- ++(0,-1) -- ++(1,0)
            -- ++(0,-1) -- ++(1,0)
            -- ++(0,-1) -- ++(1,0)
            -- ++(0,-1) -- ++(1,0)
            -- ++(4,0) -- ++(0,1)
            -- ++(1,0) -- ++(0,3)
            -- ++(1,0) -- ++ (0,4)
            -- cycle;
    \end{tikzpicture}
\end{equation*}
\caption{\label{f.Case1_d}Type $D_n$, Cases 1 and 2}
\end{figure}
If $j$ is odd then $a=n$ and $b=n-1$; otherwise $a=n-1$ and $b=n$. In either case, $r(c)=\gre_{x}+\gre_{y}=\gre_{v_{n+i}}+\gre_{v_{n+j+1}}$, where the last equality is due to \eqref{e.rightboxd}.

\vspace{1em}

\noindent {\em Case 3: $i<j$ and $(j+1,j+1)\not\in D'_{\gm}$}. Let $s_x, s_z$ be the reflections which lie in the rightmost box of row $i$ and bottom box or column $j$ respectively. Reflections $s_{i_k}$ not in the shaded boxes of Figure \ref{f.Case3_d} can be removed from the expression $r(c)=s_{i_1}s_{i_2}\cdots s_{i_{c-1}}(\ga_{i_c})$, where $\ga_{i_c}=\ga_{p}=\gre_p-\gre_{p+1}$. One checks that $r(c)=\gre_x-\gre_{z+1}=\gre_{v_{n+i}}-\gre_{\bar{v_{n+j+1}}}=\gre_{v_{n+i}}+\gre_{v_{n+j+1}}$, where the last equality is due to \eqref{e.rightboxd} and \eqref{e.bottomboxd}.
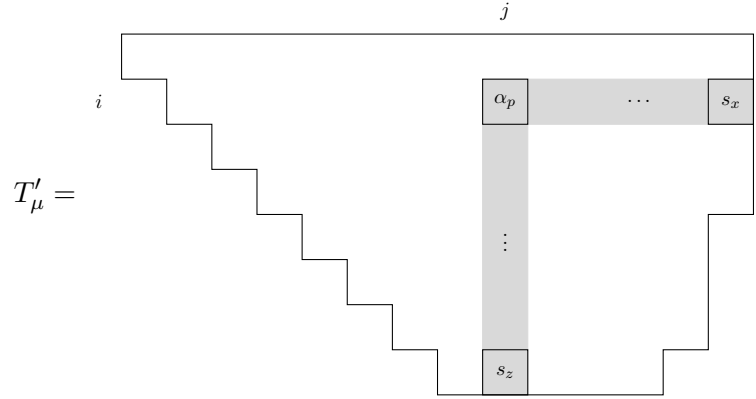
\begin{figure}[htbp]
\begin{equation*}
T'_{\gm}=\,
    \begin{tikzpicture}[xscale=.6,yscale=.6,
    baseline={([yshift=-.5ex]current bounding box.center)},
    every node/.style={scale=.7}]
         \draw  (0,7) node{$i$};
         \draw  (9,9) node{$j$};
          \draw[gray!30,fill=gray!30] (14.5,7.5) -- ++(-6,0) -- ++(0,-7) -- ++(1,0) -- ++(0,6) -- ++(5,0) -- cycle;
         \foreach \pos / \label in
          { {(14,7)}/{s_{x}},{(9,7)}/{\ga_p},{(9,1)}/{s_z}
          }
              {
                \draw \pos +(-.5,-.5) rectangle ++(.5,.5);
                \draw \pos node{$\label$};
              }
        \draw (9,4) node{\vdots};
        \draw (12,7) node{$\cdots$};

        \draw (0.5,8.5)
            -- ++(0,-1) -- ++(1,0)
            -- ++(0,-1) -- ++(1,0)
            -- ++(0,-1) -- ++(1,0)
            -- ++(0,-1) -- ++(1,0)
            -- ++(0,-1) -- ++(1,0)
            -- ++(0,-1) -- ++(1,0)
            -- ++(0,-1) -- ++(1,0)
            -- ++(0,-1) -- ++(1,0)
            -- ++(4,0) -- ++(0,1)
            -- ++(1,0) -- ++(0,3)
            -- ++(1,0) -- ++ (0,4)
            -- cycle;


    \end{tikzpicture}
\end{equation*}
\caption{\label{f.Case3_d}Type $D_n$, Case 3}
\end{figure}

\vspace{1em}

\noindent {\em Case 4: $i=j$}. The analysis is similar to the other cases.

\subsection{Relationship with previously obtained restriction formulas} \label{ss.previous}

The formulas in this paper for $i_v^*[\co_{X^w}]$ for types $A_n$ and $C_n$, expressed in terms of set-valued tableaux, appeared earlier in \cite{Kre:05} and \cite{Kre:06}. A restriction formula in type $A_n$ can also be obtained by specializing the factorial Grothendieck polynomials of \cite{Mcn:06}. This restriction formula is generalized to types  $B_n$, $C_n$, and $D_n$ in \cite{IkNa:11}. The main difference in the restriction formulas of this paper is that they are positive, meaning that they result in $(-1)^{l(w)}$ times sums of monomials in $e^{-\ga}-1$, where $\ga$ is a positive root (see Theorem \ref{t.pullback} and Remark \ref{r.tangent}). In the two examples below, we compare the restriction formulas of this paper and of  \cite{IkNa:11}. 

The formulas of \cite{IkNa:11}, whose notation and definitions we adopt in this section, use the binary operators $\oplus$ and $\ominus$ (see \cite{FoKi:94}, \cite{FoKi:96}):
\begin{equation*}
a\oplus b=a+b+\gb ab\ \text{ and }\ a\ominus b=\frac{a-b}{1+\gb b},
\end{equation*}
where $\gb$ is a parameter. Note that $\ominus$ is the inverse of $\oplus$. Define $\oplus x= 0\oplus x=x$, $\ominus x=0\ominus x$, and set the parameter $\gb$ equal to $-1$. One checks that
$(1-e^x)\oplus (1-e^y)=1-e^{x+y}$, $(1-e^x)\ominus (1-e^y)=1-e^{x-y}$, and  $\ominus (1-e^{x})=1-e^{-x}$.

\begin{example}
We work in type $C_n$, $n=2$. Let $v=w=\{2,4,1,3\}=\{2,\bar{1},1,\bar{2}\}\in W^{P_n}$. Then $\gl_v=(2,1)$, $\gm=\gl'_v=(2,0)$, and $\gl=\gl'_w=(2,0)$.

We first compute $i_v^*[\co_{X^w}]$ using Theorem \ref{t.ktheory_eyd_bcd}. The set $\ce'_{\gl}(\gm)$ consists of the single element
\begin{center}
\begin{tikzpicture}[scale=.5,every node/.style={scale=1},
    baseline={([yshift=-.5ex]current bounding box.center)}]
    \foreach \pos in
    {{(1,1)},{(2,1)}}
        {\draw[fill=blue!30] \pos +(-.5,-.5) rectangle ++(.5,.5);}
     \foreach \pos in
      { {(1,1)},{(2,1)}
      }
          {\draw \pos +(-.5,-.5) rectangle ++(.5,.5);}
    \draw  (0,1) node{$1$};
    \draw  (1,2) node{$1$}; \draw  (2,2) node{$-2$};
\end{tikzpicture}
\end{center}
Thus
\begin{equation}\label{e.pull_smallex}
    i_v^*[\co_{X^w}]=(e^{-2\gre_1}-1)(e^{-(\gre_1-\gre_2)}-1).
\end{equation}

We next compute $i_v^*[\co_{X^w}]$ using the methods of \cite{IkNa:11}.
The elements of $\ce_n^I(\gl)$ \cite[9.2]{IkNa:11} are
\begin{center}
    \begin{tikzpicture}
\newcommand{\eydin}[3]
    {
        \begin{tikzpicture}[scale=.4]
            \foreach \pos in
            {{#1},{#2}}
                {\draw[fill=blue!30] \pos +(-.5,-.5) rectangle ++(.5,.5);}
            \foreach \pos in
            {{#3}}
                {\draw \pos +(-.7,0) node{$\times$};}
             \foreach \pos in
              { {(1,2)},{(2,2)},{(3,2)},
                {(2,1)}, {(3,1)}
              }
                  {\draw \pos +(-.5,-.5) rectangle ++(.5,.5);}
        \end{tikzpicture}
      }

\newcommand{\eyd}[2]
    {
        \begin{tikzpicture}[scale=.4]
            \foreach \pos in
            {{#1},{#2}}
                {\draw[fill=blue!30] \pos +(-.5,-.5) rectangle ++(.5,.5);}
             \foreach \pos in
              { {(1,2)},{(2,2)},{(3,2)},
                {(2,1)}, {(3,1)}
              }
                  {\draw \pos +(-.5,-.5) rectangle ++(.5,.5);}
        \end{tikzpicture}
      }

    \node[rectangle] (first) {\eyd{(1,2)}{(2,2)}};
    \node[rectangle] (second) [right=of first] {\eydin{(1,2)}{(3,1)}{(2,2)}};
    \node[rectangle] (third) [right=of second] {\eydin{(2,1)}{(3,1)}{(1,2)}};

\end{tikzpicture}

\end{center}
By \cite[(9.6)]{IkNa:11},
\begin{align*}
    \GC_\gl^{(n)}(x\,|\,b)= & (x_1\oplus x_1)(x_1\oplus x_2)+(x_1\oplus x_1)(x_2\oplus b_1)(1-x_1\oplus x_2)\\
    &+(x_2\oplus x_2)(x_2\oplus b_1)(1-x_1\oplus x_1).
\intertext{Substituting $b_\gm=(\ominus b_{\gm_1},\ldots,\ominus b_{\gm_r},0,0,0,\ldots)=(\ominus b_{2},0,0,\ldots)$ for $x$,}
    \GC_\gl^{(n)}(b_\gm\,|\,b)= & (\ominus b_{2}\ominus b_{2})(\ominus b_{2}\oplus 0)+(\ominus b_{2}\ominus b_{2})(0\oplus b_1)(1-\ominus b_{2}\oplus 0)\\
    &+(0\oplus 0)(0\oplus b_1)(1-\ominus b_{2}\ominus b_{2})\\
    = & (\ominus b_{2}\ominus b_{2})(\ominus b_{2})+(\ominus b_{2}\ominus b_{2})(b_1)(1-\ominus b_{2}).
\end{align*}
Substituting $1-e^{t_i}$ for $b_i$, $i=1,2$, and then replacing $t_1,t_2$ by $\gre_2,\gre_1$ respectively in order to account for the ordering of the roots of the Dynkin diagram in \cite[4.6]{IkNa:11}, we obtain
\begin{equation}\label{e.pull_smallex_in}
    i_v^*[\co_{X^w}] = (1-e^{-2\gre_1})(1-e^{-\gre_1})+(1-e^{-2\gre_1})(1-e^{\gre_2})(1-(1-e^{-\gre_1})).
\end{equation}
One checks that this agrees with \eqref{e.pull_smallex}.

\end{example}

\begin{example}\label{ex.in_d5}
Consider type $D_n$, $n=5$. Let $v=\{2,4,5,8,10,1,3,6,7,9\}=$\\ $\{2,4,5,\bar{3},\bar{1},1,3,\bar{5},\bar{4},\bar{2}\}$,  $w=\{1,2,5,7,8,3,4,6,9,10\}=\{1,2,5,\bar{4},\bar{3},3,4,\bar{5},\bar{2},\bar{1}\}\in W^{P_n}$. Then $\gl_v=(5,4,2,2,1)$, $\gm=\gl'_v=(4,2)$, $\gl_w=(3,3,2,0,0)$, $\gl=\gl'_w=(2,1)$.

We first compute $i_v^*[\co_{X^w}]$ using Theorem \ref{t.ktheory_eyd_bcd}. The set $\ce'_{\gl}(\gm)$ consists of the single element
\begin{center}
\begin{tikzpicture}[scale=.6,every node/.style={scale=1},
    baseline={([yshift=-.5ex]current bounding box.center)}]
    \foreach \pos in
    {{(1,2)},{(2,2)},{(2,1)}}
        {\draw[fill=blue!30] \pos +(-.5,-.5) rectangle ++(.5,.5);}
     \foreach \pos in
      { {(1,2)},{(2,2)},{(3,2)},{(4,2)},
        {(2,1)},{(3,1)}
      }
          {\draw \pos +(-.5,-.5) rectangle ++(.5,.5);}
    \draw  (0,1) node{$3$};  \draw  (0,2) node{$1$};
    \draw  (1,3) node{$3$}; \draw  (2,3) node{-$5$}; \draw  (3,3) node{-$4$}; \draw  (4,3) node{-$2$};
\end{tikzpicture}
\end{center}

Thus
\begin{equation}\label{e.pull_bigger}
    i_v^*[\co_{X^w}]=-(e^{-(\gre_1+\gre_3)}-1)(e^{-(\gre_1-\gre_5)}-1)
    (e^{-(\gre_3-\gre_5)}-1).
\end{equation}

We next compute $i_v^*[\co_{X^w}]$ using the methods of \cite{IkNa:11}. The elements of $\ce_n^I(\gl)$ \cite[9.2]{IkNa:11} are
\begin{center}
    \begin{tikzpicture}
\newcommand{\eydin}[5]
    {
        \begin{tikzpicture}[scale=.4]
            \foreach \pos in
            {{#1},{#2},{#3}}
                {\draw[fill=blue!30] \pos +(-.5,-.5) rectangle ++(.5,.5);}
            \foreach \pos in
            {{#4},{#5}}
                {\draw \pos +(-.7,0) node{$\times$};}
             \foreach \pos in
              { {(1,4)},{(2,4)},{(3,4)},{(4,4)},
                {(2,3)},{(3,3)},{(4,3)},
                {(3,2)},{(4,2)},
                {(4,1)}
              }
                  {\draw \pos +(-.5,-.5) rectangle ++(.5,.5);}
        \end{tikzpicture}
      }

\newcommand{\eyd}[3]
    {
        \begin{tikzpicture}[scale=.4]
            \foreach \pos in
            {{#1},{#2},{#3}}
                {\draw[fill=blue!30] \pos +(-.5,-.5) rectangle ++(.5,.5);}
             \foreach \pos in
              { {(1,4)},{(2,4)},{(3,4)},{(4,4)},
                {(2,3)},{(3,3)},{(4,3)},
                {(3,2)},{(4,2)},
                {(4,1)}
              }
                  {\draw \pos +(-.5,-.5) rectangle ++(.5,.5);}
        \end{tikzpicture}
      }

    \node[rectangle] (first) {\eyd{(1,4)}{(2,4)}{(2,3)}};
    \node[rectangle] (second) [right=of first] {\eydin{(1,4)}{(2,4)}{(4,1)}{(2,3)}{(2,3)}};
    \node[rectangle] (third) [right=of second] {\eydin{(1,4)}{(3,3)}{(4,1)}{(2,4)}{(2,4)}};
    \node[rectangle] (fourth) [right=of third] {\eydin{(1,4)}{(4,2)}{(4,1)}{(2,4)}{(3,3)}};
    \node[rectangle] (fifth) [right=of fourth] {\eydin{(3,2)}{(4,2)}{(4,1)}{(1,4)}{(1,4)}};

\end{tikzpicture}
\end{center}
By \cite[(9.6)]{IkNa:11},
\begin{align*}
    \GD_\gl^{(n)}(x\,|\,b)= & (x_1\oplus x_2)(x_1\oplus x_3)(x_2\oplus x_3)\\
    & + (x_1\oplus x_2)(x_1\oplus x_3)(x_4\oplus b_1)(1-x_2\oplus x_3)\\
    & + (x_1\oplus x_2)(x_2\oplus x_4)(x_4\oplus b_1)(1-x_1\oplus x_3)\\
    & + (x_1\oplus x_2)(x_3\oplus b_1)(x_4\oplus b_1)(1-x_1\oplus x_3)(1-x_2\oplus x_4)\\
    & + (x_3\oplus x_4)(x_3\oplus b_1)(x_4\oplus b_1)(1-x_1\oplus x_2).
\intertext{Substituting $b_\gm=(\ominus b_{\gm_1+1},\ldots,\ominus b_{\gm_r+1},0,0,0,\ldots)=(\ominus b_{5},\ominus b_{3},0,0,0,\ldots)$ for $x$,}
    \GD_\gl^{(n)}(b_\gm\,|\,b)= & (\ominus b_{5}\ominus b_3)(\ominus b_{5})(\ominus b_3)\\
    & + (\ominus b_{5}\ominus b_3)(\ominus b_{5})(b_1)(1-\ominus b_3)\\
    & + (\ominus b_{5}\ominus b_3)(\ominus b_3)(b_1)(1-\ominus b_{5})\\
    & + (\ominus b_{5}\ominus b_3)(b_1)(b_1)(1-\ominus b_{5})(1-\ominus b_3).
\end{align*}
Substituting $1-e^{t_i}$ for $b_i$, $i=1,\ldots,5$, and then replacing $t_1,t_2,t_3,t_4,t_5$ by $\gre_5,\gre_4,\gre_3,\gre_2,\gre_1$ respectively in order to account for the ordering of the roots of the Dynkin diagram in \cite[4.6]{IkNa:11}, we obtain
\begin{equation}\label{e.pull_bigger_in}
\begin{split}
    i_v^*[\co_{X^w}] = & (1-e^{-(\gre_1-\gre_3)})(1-e^{-\gre_1})(1-e^{-\gre_3})\\
    & + (1-e^{-(\gre_1-\gre_3)})(1-e^{-\gre_1})(1-e^{\gre_5})(1-(1-e^{-\gre_3}))\\
    & + (1-e^{-(\gre_1-\gre_3)})(1-e^{-\gre_3})(1-e^{\gre_5})(1-(1-e^{-\gre_1}))\\
    & + (1-e^{-(\gre_1-\gre_3)})(1-e^{\gre_5})(1-e^{\gre_5})(1-(1-e^{-\gre_1}))(1-(1-e^{-\gre_3})).
\end{split}
\end{equation}
One checks that this agrees with \eqref{e.pull_bigger}.

\end{example}

Although the excited Young diagrams introduced in \cite{IkNa:11} and in this paper are both related to the reduced excited Young diagrams of \cite{IkNa:09}, \cite{Kre:05}, \cite{Kre:06}, they  are different combinatorial objects.
While the excited Young diagrams of this paper reside in Young diagrams, those of  \cite{IkNa:11} reside in a grid which is unbounded on the right.   This differs from the reduced excited Young diagrams of \cite{IkNa:09}, \cite{Kre:05}, \cite{Kre:06}, as
well as the excited Young diagrams in this paper, since these all reside in Young diagrams.   Another difference between the excited Young diagrams of \cite{IkNa:11} and of here is that the former are produced by modifying reduced excited Young diagrams by adding `$\times$' symbols; an excited Young diagram of \cite{IkNa:11} with $k$ of these symbols encodes $2^k$ of what we would call excited Young diagrams. 

\subsection{Restriction formula in terms of set-valued shifted tableaux}\label{s.setvalued_shiftedtableaux}

Let $\gl$ be a strict partition. A \textbf{set-valued filling} of $D'_\gl$ is a function $T$ which assigns to each box $(i,j)$ of $D'_{\gl}$ a nonempty subset $T(i,j)$ of $\{1,\ldots,n\}$.  We call $\gl$ the \textbf{shape} of $T$. We often call $(i,j)$ a box of $T$, and refer to an element of $T(i,j)$ as an entry of box $(i,j)$ of $T$, or just an entry of $T$. A set-valued filling $T$ in which each entry of box $(i,j)$ of  $T$ is less than or equal to each entry of box $(i,j+1)$ and strictly less than each entry of box $(i+1,j)$ is said to be \textbf{semistandard}. A  \textbf{set-valued shifted Young tableau}, or just \textbf{set-valued shifted tableau}, is a semistandard set-valued filling of $D'_\gl$. A \textbf{shifted Young tableau} or just \textbf{shifted tableau} is a set-valued tableau in which each box contains a single entry.

Let $\gm$ be a strict partition. We say that a set-valued shifted tableau is \textbf{restricted by $\gm$} if, for any box $(i,j)\in T$ and any entry $x$ of $(i,j)$,
\begin{equation}\label{e.restrictedbyv_bcd}
j-i\leq \gm(x)-1.
\end{equation}
Denote by $\ct'_{\gl}(\gm)$ (resp. $\ct'^{\text{red}}_{\gl}(\gm)$) the set of set-valued shifted tableaux (resp. shifted tableaux) of shape $\gl$ which are restricted by $\gm$
(see Figures \ref{f.syt_allbc} and \ref{f.syt_alld}). The following theorem in type $C_n$ appeared in \cite{Kre:06}.
\begin{figure}[height=30em]
    \centering
    \begin{tikzpicture}[
    level distance = 2.8cm,sibling distance = 2cm, grow = east,
    type2/.style={edge from parent/.style={draw,double distance =
    1.5pt,>=latex,->}},
    type1/.style={edge from parent/.style={draw,>=latex,->}},
    typ1/.style={>=latex,->},
    typ2/.style={double distance=1.5pt,>=latex,->}
]
\newcommand{\syt}[3]
    {
        \begin{tikzpicture}[scale=.6, every node/.style={scale=.8}]
             \foreach \pos / \label in
              { {(1,2)}/{#1},{(2,2)}/{#2},{(2,1)}/{#3} }
                  {
                    \draw \pos +(-.5,-.5) rectangle ++(.5,.5);
                    \draw \pos node {\label};
                    }
        \end{tikzpicture}
      }

\node (e1) [rectangle] {\syt{1}{1}{2}}
    child[type1] {node (e2) [rectangle] {\syt{1}{1}{3}}
        child[type1] {node (e4) [rectangle] {\syt{1}{2}{3}}
            child[type1] {node (e6) [rectangle] {\syt{2}{2}{3}}}
            child[type2] {node (e7) [rectangle] {\syt{1,2}{2}{3}}}
        }
        child[type2] {node (e5) [rectangle] {\syt{1}{1,2}{3}}}
    }
    child[type2] {node (e3) [rectangle] {\syt{1}{1}{2,3}}};
    \draw[rounded corners = 5pt, dashed]
        (e1.north west) -- (e1.north east)
        -- (e6.north east) -- (e6.south east)
        -- (e6.south west) -- (e1.south west)
        -- cycle;
        \draw[typ2] (e1) -- (e3);
        \draw[typ2] (e2) -- (e5);
        \draw[typ2] (e4) -- (e7);
\end{tikzpicture}
    \caption{\label{f.syt_allbc} The set-valued Young tableaux $\ct'_\gl(\gm)$ in types $B_n$ and $C_n$, for $\gm=(4,2,1)$, $\gl=(2,1)$. The dashed line encloses $\ct'^{\text{red}}_\gl(\gm)$. The entries of each element of $\ct'_\gl(\gm)$ record the row numbers of the boxes in the corresponding element of $\ce'_\gl(\gm)$ appearing in Figure \ref{f.eyd_allbc}.}
\end{figure}

\begin{figure}[height=30em]
    \centering
    \begin{tikzpicture}[
    level distance = 2.8cm,sibling distance = 2cm, grow = east,
    type2/.style={edge from parent/.style={draw,double distance =
    1.5pt,>=latex,->}},
    type1/.style={edge from parent/.style={draw,>=latex,->}},
    typ1/.style={>=latex,->},
    typ2/.style={double distance=1.5pt,>=latex,->}
]
\newcommand{\syt}[4]
    {
        \begin{tikzpicture}[scale=.6, every node/.style={scale=.8}]
             \foreach \pos / \label in
              { {(1,2)}/{#1},{(2,2)}/{#2},{(3,2)}/{#3},{(2,1)}/{#4} }
                  {
                    \draw \pos +(-.5,-.5) rectangle ++(.5,.5);
                    \draw \pos node {\label};
                    }
        \end{tikzpicture}
      }

\node (e1) [rectangle] {\syt{1}{1}{1}{2}}
    child {node (e2) [rectangle] {\syt{1}{1}{2}{2}}
        child {node (e6) [rectangle] {\syt{1}{1}{2}{4}}
            edge from parent[draw=none]
            child {node (e10) [rectangle] {\syt{1}{2}{2}{4}}
                edge from parent[draw=none]
            }
        }
    }
    child {node (e3) [rectangle] {\syt{1}{1}{1}{4}}
        child {node (e7) [rectangle] {\syt{1}{1}{2}{2,4}}
            edge from parent[draw=none]
            child {node (e11) [rectangle] {\syt{1}{1,2}{2}{4}}
                edge from parent[draw=none]
            }
        }
    }
    child {node (e4) [rectangle] {\syt{1}{1}{1}{2,4}}
        child {node (e8) [rectangle] {\syt{1}{1}{1,2}{4}}
            edge from parent[draw=none]
        }
    }
    child {node (e5) [rectangle] {\syt{1}{1}{1,2}{2}}
        child {node (e9) [rectangle] {\syt{1}{1}{1,2}{2,4}}
            edge from parent[draw=none]
        }
    };
    \draw[rounded corners = 5pt, dashed]
        (e1.north west) -- (e1.north east)
        -- (e3.north west) -- (e3.north east)
        -- (e6.north west) -- (e10.north east)
        -- (e10.south east) -- (e2.south west)
        -- (e1.south west) -- cycle;
    \draw[typ2] (e5.0) -- (e9.180);
    \draw[typ1] (e5.340) -- (e8.160);
    \draw[typ2] (e4.20) -- (e9.200);
    \draw[typ1] (e4.340) -- (e7.160);
    \draw[typ2] (e3.20) -- (e8.200);
    \draw[typ1] (e3.340) -- (e6.160);
    \draw[typ2] (e2.20) -- (e7.200);
    \draw[typ1] (e2.0) -- (e6.180);
    \draw[typ2] (e6.20) -- (e11.200);
    \draw[typ1] (e6.0) -- (e10.180);
    \draw[typ2] (e1) -- (e4);
    \draw[typ2] (e1) -- (e5);
    \draw[typ1] (e1) -- (e2);
    \draw[typ1] (e1) -- (e3);
\end{tikzpicture}
    \caption{\label{f.syt_alld} The set-valued shifted tableaux $\ct'_\gl(\gm)$ in type $D_6$, where $\gm=(5,3,2,1)$, $\gl=(3,1)$. The dashed line encloses $\ct'^{\text{red}}_\gl(\gm)$. The entries of each element of $\ct'_\gl(\gm)$ record the row numbers of the boxes in the corresponding element of $\ce'_\gl(\gm)$ appearing in Figure \ref{f.eyd_alld}.}
\end{figure}

\begin{Thm}\label{t.ktheory_syt_bcd}
Let $w\leq v\in W^{P_n}$, and let $\gl=\gl'_w$ and $\gm=\gl'_v$ be the corresponding strict partitions. Then
{\allowdisplaybreaks
\begin{align*}
    & B_n:\quad i_v^*[\co_{X^w}]=(-1)^{l(w)}\sum_{T\in\ct'_{\gl}(\gm)}\prod_{(i,j)\in T}\prod_{x\in T(i,j)}\left(e^{-2^{-\delta_{ij}}(\gre_{v_{n+x+1}}+\gre_{v_{n+x+j-i+1}})}-1\right)\\
    & C_n:\quad i_v^*[\co_{X^w}]=(-1)^{l(w)}\sum_{T\in\ct'_{\gl}(\gm)}\prod_{(i,j)\in T}\prod_{x\in T(i,j)}\left(e^{-(\gre_{v_{n+x}}+\gre_{v_{n+x+j-i}})}-1\right)\\
    & D_{n}:\quad i_v^*[\co_{X^w}]=(-1)^{l(w)}\sum_{T\in\ct'_{\gl}(\gm)}\prod_{(i,j)\in T}\prod_{x\in T(i,j)}\left(e^{-(\gre_{v_{n+x}}+\gre_{v_{n+x+j-i+1}})}-1\right),
\end{align*}
}
where for $1\leq m\leq n$, $\gre_{\bar{m}}$ is defined to equal $-\gre_m$.
\end{Thm}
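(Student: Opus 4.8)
The plan is to deduce Theorem \ref{t.ktheory_syt_bcd} from the excited shifted Young diagram formula of Theorem \ref{t.ktheory_eyd_bcd}, in exactly the way Theorem \ref{t.ktheory_svt} was deduced from Theorem \ref{t.ktheory_eyd} in type $A_n$: by exhibiting a weight-preserving bijection between set-valued shifted tableaux and excited shifted Young diagrams. Comparing the two theorems factor by factor suggests the shifted analogue of the map \eqref{e.f}, sending an entry $x$ in box $(i,j)$ of a tableau $T \in \ct'_\gl(\gm)$ to the box of $D'_\gm$ lying on the same diagonal $j-i$ but in the row prescribed by $x$. In types $C_n$ and $D_n$ this is the box $(x,\,x+j-i)$ in row $x$, and one checks directly that the factor attached to $x$ in box $(i,j)$ by Theorem \ref{t.ktheory_syt_bcd} coincides with the factor attached to its image box by Theorem \ref{t.ktheory_eyd_bcd} — in type $C_n$ both equal $e^{-(\gre_{v_{n+x}}+\gre_{v_{n+x+j-i}})}-1$, and similarly in type $D_n$. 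It would then suffice to prove that this assignment is a well-defined bijection $f\colon \ct'_\gl(\gm)\to \ce'_\gl(\gm)$, after which the theorem follows by substituting the equality of factors into Theorem \ref{t.ktheory_eyd_bcd}.

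First I would verify that $f$ is well defined, i.e.\ that each image box $(x,\,x+j-i)$ lies in $D'_\gm$: since $i\le j$ in a shifted shape, the box lies weakly above the main diagonal, and the inequality $j-i\le\gm(x)-1$ of \eqref{e.restrictedbyv_bcd} is exactly the condition that row $x$ of $D'_\gm$ is nonempty and that the column $x+j-i$ does not overrun it. Next I would introduce excitations on set-valued shifted tableaux mirroring the three families of excitations on shifted diagrams from Section \ref{ss.eyd_bcd}: for an off-diagonal entry, and for a diagonal entry in types $B_n$ and $C_n$, an excitation replaces $x$ by $x+1$ (type 1) or adjoins $x+1$ (type 2); for a diagonal entry in type $D_n$ it instead replaces $x$ by $x+2$ or adjoins $x+2$, reflecting the two-step diagonal excitation $(i,i)\to(i+2,i+2)$. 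I would then prove the analogue of Lemma \ref{l.fcommutesexcitation}: that the admissibility of an excitation on $T$ at $x$ translates under $f$ into the admissibility of the excitation of the same type on $f(T)$ at the image box, and that $f$ intertwines the two operations.

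With the intertwining in hand, bijectivity follows the template of Lemmas \ref{l.fsurjective}, \ref{l.finjective} and Proposition \ref{p.fbijective}. I would first show that $f(T^{\text{top}})=D'_\gl$ for the minimal tableau $T^{\text{top}}$ with $T^{\text{top}}(i,j)=\{i\}$, and that every $T\in\ct'_\gl(\gm)$ is reachable from $T^{\text{top}}$ by a sequence of excitations (by induction on the sum of the entries, as in the unshifted case); intertwining then gives that $f(T)\in\ce'_\gl(\gm)$, and pulling excitations back through $f$ shows every excited shifted diagram is hit, so $f$ is surjective. Injectivity would be proved constructively, reconstructing $T$ from $C=f(T)$ one diagonal at a time starting from the longest diagonal: on each diagonal, semistandardness pins down the unique box of $T$ into which each row-index recorded by $C$ must be placed, and surjectivity guarantees that this fills every box.

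The main obstacle I anticipate is type $B_n$ together with the diagonal boxes. In type $B_n$ the factors of Theorems \ref{t.ktheory_eyd_bcd} and \ref{t.ktheory_syt_bcd} carry an index shift and the $2^{-\delta_{ij}}$ weights inherited from the identification $\OG(n,2n+1)\cong\OG(n+1,2n+2)$ of Section \ref{ss.orthog_grassmannians}, so that entry $x$ corresponds to row $x+1$ rather than row $x$; rather than track these through the bijection directly, the cleanest route is to establish the $C_n$ and $D_n$ formulas by the bijection above and then obtain the $B_n$ formula from the $D_{n+1}$ formula by invoking Proposition \ref{p.BfromD_restriction} and setting $\gre_{n+1}=0$. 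The remaining delicate point is the consistency of the two-step $D_n$ diagonal excitations with both the excitation admissibility conditions (which in type $D_n$ involve the four boxes $(i,i+1),(i+1,i+1),(i+1,i+2),(i+2,i+2)$) and the index conventions of the reflection-valued tableau $T'_\gm$ from Proposition \ref{p.subsequences_bcd}. Away from the diagonal the argument is a verbatim transcription of the type $A_n$ proof.
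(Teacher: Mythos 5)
Your treatment of types $C_n$ and $D_n$ is essentially the paper's own proof: the paper defines the same map $f$ (equation \eqref{e.f_bcd}), asserts that it is a bijection $\ct'_{\gl}(\gm)\to\ce'_{\gl}(\gm)$ (Proposition \ref{p.fbijective_bcd}), and omits the details as being ``so similar'' to the type $A_n$ argument of Proposition \ref{p.fbijective}. Your outline --- tableau excitations with the two-step diagonal move in type $D_n$, the intertwining lemma, reachability from $T^{\text{top}}$, surjectivity, and diagonal-by-diagonal reconstruction for injectivity --- is precisely that omitted argument, and your factor-matching check is the substitution the paper leaves implicit.

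The gap is your type $B_n$ argument. Specializing the $D_{n+1}$ formula via Proposition \ref{p.BfromD_restriction} and $\gre_{n+1}\mapsto 0$ does yield a valid expression for $i_v^*[\co_{X^w}]$, but it is not the expression asserted in Theorem \ref{t.ktheory_syt_bcd}: what you get is a sum over \emph{type-$D$} set-valued shifted tableaux (diagonal entries moving in steps of two) with $D$-type weights in which $\gre_{n+1}$ has been killed, whereas the stated $B_n$ formula is a sum over \emph{type-$B_n$} tableaux --- combinatorially identical to the type-$C_n$ ones, with one-step diagonal increments --- carrying the $2^{-\delta_{ij}}$-halved weights. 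These are genuinely different expressions, and the paper stresses exactly this point in the Remark following Theorem \ref{t.ktheory_eyd_bcd}: the two methods ``produce different expressions,'' and ``from the combinatorics alone it is not clear that these two expressions result in the same value.'' So producing the specialized $D_{n+1}$ expression does not prove the stated $B_n$ identity. Moreover, the detour is unnecessary: since $\ce'_{\gl}(\gm)$, $\ct'_{\gl}(\gm)$, and $f$ in type $B_n$ coincide with those of type $C_n$, the bijection argument is word-for-word the $C_n$ argument; the bijection is blind to the weights, and the $B_n$ weights --- the $2^{-\delta_{ij}}$ factor and the index shift, which comes from writing $B_n$ Weyl group elements as permutations of $\{1,\ldots,2n+1\}$ with fixed middle entry, not from the $\OG(n,2n+1)\cong\OG(n+1,2n+2)$ identification --- are simply supplied by Theorem \ref{t.ktheory_eyd_bcd} together with Proposition \ref{p.subsequences_bcd}(ii). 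Replacing your $B_n$ detour with this direct substitution closes the gap.
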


Theorem \ref{t.ktheory_syt_bcd} is essentially the same statement as Theorem \ref{t.ktheory_eyd_bcd}, except that the indexing set $\ce'_{\gl}(\gm)$ has been replaced by $\ct'_{\gl}(\gm)$.
This replacement is given by the map $f:\ct'_{\gl}(\gm)\to\{\text{subsets of }D'_{\gm}\}$,
\begin{equation}\label{e.f_bcd}
\begin{split}
f(T)= \{(x,x+j-i)\mid (i,j)\in\gl, x\in T(i,j)\}.
\end{split}
\end{equation}
\begin{Prop}\label{p.fbijective_bcd}
The map $f$ is a bijection from $\ct'_{\gl}(\gm)$ to $\ce'_{\gl}(\gm)$.
\end{Prop}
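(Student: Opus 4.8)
The plan is to adapt, essentially verbatim, the chain of lemmas that established the type $A_n$ bijection in Proposition \ref{p.fbijective}, replacing ordinary Young diagrams by shifted ones and the restriction inequality \eqref{e.restrictedbyv} by \eqref{e.restrictedbyv_bcd}. First I would check that $f$ is well defined, i.e.\ that every box of $f(T)$ lies in $D'_\gm$: for $(i,j)\in T$ and $x\in T(i,j)$ the image box $(x,x+j-i)$ of \eqref{e.f_bcd} lies on the same diagonal $j-i$ as $(i,j)$ but in row $x$, and semistandardness forces $i\le x$, so this box fits inside the shifted shape $D'_\gm$ precisely when $j-i\le \gm(x)-1$, which is exactly \eqref{e.restrictedbyv_bcd}. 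Next I would introduce the analogue of excitations on set-valued shifted tableaux (using the type-dependent rules of Section \ref{ss.eyd_bcd} for the diagonal boxes $(i,i)$), define $T^{\text{top}}$ by $T^{\text{top}}(i,j)=\{i\}$, observe that $f(T^{\text{top}})=D'_\gl$, and prove that every $T\in\ct'_\gl(\gm)$ is obtained from $T^{\text{top}}$ by a sequence of excitations, by induction on the sum of the entries of $T$, exactly as in the type $A_n$ argument.

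With these tools in place, surjectivity onto $\ce'_\gl(\gm)$ would follow from a shifted version of Lemma \ref{l.fcommutesexcitation}: each excitation of a set-valued shifted tableau $T$ modifying an entry $x$ of box $(i,j)$ corresponds under $f$ to an excitation of the same type of $f(T)$ modifying the box coming from that entry, the local admissibility conditions on the two sides matching term by term. Intertwining excitations this way shows both that the image of $f$ lies in $\ce'_\gl(\gm)$ and, by reversing the correspondence and using reachability from $T^{\text{top}}=f^{-1}(D'_\gl)$, that every element of $\ce'_\gl(\gm)$ is attained, as in Lemma \ref{l.fsurjective}.

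Injectivity I would prove constructively, following Lemma \ref{l.finjective}, by reconstructing $T$ from $C=f(T)$ one diagonal at a time starting from the largest diagonal: a box of $C$ in row $x$ on diagonal $k$ forces an entry $x$ somewhere in diagonal $k$ of $T$, and semistandardness pins down the unique box of that diagonal into which $x$ must be placed, while surjectivity of $f$ guarantees that the procedure always succeeds and that every box of each diagonal of $T$ receives at least one entry. Since $f$ preserves diagonals, this reconstruction carries over to the shifted setting without change.

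The one genuinely new point, and the main obstacle, is the behaviour on the diagonal boxes $(i,i)$, where the excitation rules differ by type: in types $B_n$ and $C_n$ a diagonal excitation moves $(i,i)$ to $(i+1,i+1)$, whereas in type $D_n$ it skips to $(i+2,i+2)$. I must verify that the corresponding operation on set-valued shifted tableaux (changing or adjoining a diagonal entry) is compatible both with $f$ and with semistandardness in each type; in particular the $D_n$ step, which adjusts a diagonal entry by two and reflects the parity conventions built into $T'_\gm$, requires checking that the resulting filling is still a legitimate semistandard set-valued shifted tableau restricted by $\gm$. Once the diagonal cases of the excitation-commuting lemma are settled, the remainder of the argument is formally identical to the type $A_n$ proof, so, as with Proposition \ref{p.subsequences_bcd}(i), the off-diagonal details may be safely omitted.
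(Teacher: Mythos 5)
Your proposal follows exactly the route the paper takes: the paper's own proof of Proposition \ref{p.fbijective_bcd} simply states that the argument is so similar to that of Proposition \ref{p.fbijective} (the type $A_n$ case) that the details are omitted, and your outline---well-definedness via \eqref{e.restrictedbyv_bcd}, reachability from $T^{\text{top}}$ by induction on the sum of entries, an excitation-commuting lemma in the spirit of Lemma \ref{l.fcommutesexcitation}, and diagonal-by-diagonal reconstruction for injectivity as in Lemma \ref{l.finjective}---is precisely that adaptation. Your added attention to the diagonal boxes $(i,i)$, where the type $D_n$ excitation jumps to $(i+2,i+2)$ and so the corresponding tableau entry changes by two, correctly isolates the only place where the shifted setting genuinely differs from type $A_n$, and is a point the paper leaves implicit.
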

The proof of this proposition is so similar to that of Proposition \ref{p.fbijective} that we omit the details.

\subsection{Hilbert series and Hilbert polynomials of points on Schubert varieties}\label{ss.hilbseries_BCD}

In types $C_n$ and $D_n$, the parabolic subgroup ${P_n}$ is cominuscule (cf. \cite[9.0.14]{BiLa:00}). Thus Corollary \ref{c.cominusculemult} may be used to compute the Hilbert series, Hilbert polynomial, and multiplicity of $X^w$ at $v$. In the present setting, the constant $m_k$ of Corollary \ref{c.cominusculemult} is equal to the number of excited shifted Young diagrams $C\in\ce'_{\gl}(\gm)$ such that the number of boxes of $C$ is $k+|\gm|$. In terms of set-valued shifted Young tableaux, $m_k$ is equal to the number of $T\in\ct'_{\gl}(\gm)$ with $k+|\gm|$ entries.

\begin{example} In type $C_n$, $n=4$, let $w=\{1,2,\bar{4},\bar{3},3,4,\bar{2},\bar{1}\}$, $v=\{2,\bar{4},\bar{3},\bar{1},1,3,4,\bar{2}\}$, as in Example \ref{ex.restriction_BC}. Then $\gl=\gl'_w=(2,1)$, $\gm=\gl'_v=(4,2,1)$, $l(w)=|\gl'_w|=3$, and $d_w=n(n+1)/2-l(w)=7$. The set of excited shifted Young diagrams $\ce'_{\gl}(\gm)$ appears in Figure \ref{f.eyd_allbc}, and the set of set-valued shifted tableaux $\ct'_{\gl}(\gm)$ appears in Figure \ref{f.syt_allbc}. From either of these figures, one sees that $m_0=4$ and $m_1=3$. Hence
{\allowdisplaybreaks
\begin{align*}
    H(X^w,v)(t)&=\frac{4}{(1-t)^7}-\frac{3}{(1-t)^6},\\
    h(X^w,v)(i)&=4{i +6\choose 6}-3{i+5\choose 5},\\
    \mult(X^w,v)&=4.
\end{align*}
}

\end{example}

\begin{example}\label{ex.Hilbert_Dn} In type $D_n$, $n=6$, let $w=\{1,2,4,6,\bar{5},\bar{3},3,5,\bar{6},\bar{4},\bar{2},\bar{1}\}$ and $v=\{2,6,\bar{5},\bar{4},\bar{3},\bar{1},1,3,4,5,\bar{6},\bar{2}\}$, as in Example \ref{ex.restriction_D}. Then $\gl=\gl'_w=(3,1)$, $\gm=\gl'_v=(5,3,2,1)$, $l(w)=|\gl'_w|=4$, and $d_w=n(n-1)/2 - l(w)=11$. The set of excited shifted Young diagrams $\ce'_{\gl}(\gm)$ appears in Figure \ref{f.eyd_alld}, and the set of set-valued shifted tableaux $\ct'_{\gl}(\gm)$ appears in Figure \ref{f.syt_alld}. From either of these figures one reads off $m_0=5$, $m_1=5$, and $m_2=1$. Hence
\begin{align*}
    H(X^w,v)(t)&=\frac{5}{(1-t)^{11}}-\frac{5}{(1-t)^{10}}+\frac{1}{(1-t)^{9}},\\
    h(X^w,v)(i)&=5{i +10\choose 10}-5{i+9\choose 9}+{i+8\choose 8},\\
    \mult(X^w,v)&=5.
\end{align*}
\end{example}

In type $B_n$, ${P_n}$ is not cominuscule, so Corollary 1.14 may not be applied directly. However, since the isomorphism $\pi:\OG(n+1,2n+2)\to \OG(n,2n+1)$
identifies $T$-fixed points and Schubert varieties (see Section \ref{ss.orthog_grassmannians} or \cite[1.3]{RaUp:10}), properties of singularities of Schubert varieties in $\OG(n,2n+1)$ can be obtained from those of Schubert varieties in $\OG(n+1,2n+2)$.

\begin{example} In type $B_n$, $n=5$, let $w=(1,2,4,\bar{5},\bar{3},3,5,\bar{4},\bar{2},\bar{1})$ and\\ $v=(2,5,\bar{4},\bar{3},\bar{1},1,3,4,\bar{5},\bar{2})$.
Then $v$ and $w$ are identified with the corresponding elements of $W^{P_{n+1}}$ of Example \ref{ex.Hilbert_Dn} (see Section \ref{ss.strict_partitions_bcd}).
Therefore $H(X^w,v)(t)$, $h(X^w,v)(i)$, and $\mult(X^w,v)$ are the same as in that example.
\end{example}

Other multiplicity formulas appear in \cite{LaWe:90}, \cite{GhRa:06}, and \cite{RaUp:10}. The above formula for the multiplicity of $X^w$ at $v$, expressed in terms of reduced excited shifted Young diagrams, appeared earlier in \cite{IkNa:09}. In type $C_n$, the formula can be deduced from \cite{Kre:06} and \cite{GhRa:06}.  Formulas for the Hilbert function of $X^w$ at $v$ appear in \cite{GhRa:06}, \cite{RaUp:10}, and \cite{Upa:09}.


\appendix

\section{Root systems and Weyl groups in types $A_n$, $B_n$, $C_n$, and $D_n$}\label{s.appendix_roots}

We review some facts about the classical root systems and Weyl groups.

\noindent \textbf{Type $A_{n-1}$}.
The special linear group $\SL_{n}(\C)$ is equal to $G=\{g\in \GL_{n}(\C)\mid \det(g)=1\}$. The Lie algebra $\fg=\fs\fl_{n}(\C)=\{a\in \fg\fl_{n}(\C)\mid \tr(a)=0\}$. The set of diagonal matrices in $\fg$ forms a Cartan subalgebra
\begin{equation*}
    \fh=\{\diag(a_1,\ldots,a_{n})\mid a_1,\ldots,a_{n}\in\C, \sum a_i = 0\}.
\end{equation*}
For $1\leq i\leq n$, let $\gre_i\in\fh^*$ be the linear functional
 \begin{equation*}
    \gre_i(\diag(a_1,\ldots,a_{n}))=a_i.
 \end{equation*}
 Then $\{\gre_1,\ldots,\gre_{n}\}$ span $\fh^*$. The set of roots $\gP$ of $\fg$ relative to $\fh$ is $\{\gre_i- \gre_j, 1\leq i\neq j\leq n\}$.
 The set
 \begin{equation*}
    \gD=\{\ga_1=\gre_1-\gre_2,\ldots,\ga_{n-1}=\gre_{n-1}-\gre_{n}\}
\end{equation*}
forms a base of $\gP$, with respect to which the set of positive roots is
\begin{equation*}
    \gP^+=\{\gre_i-\gre_j, 1\leq i<j\leq n\}.
\end{equation*}
For $i\in\{1,\ldots,n-1\}$,  the reflection $s_i$ along $\ga_i$ is given by:
\begin{equation*}
    s_i:
    \begin{cases}
        &\gre_i\mapsto \gre_{i+1}\\
        &\gre_{i+1}\mapsto\gre_{i}
    \end{cases}.
\end{equation*}
The Weyl group $W=\langle s_1,\ldots,s_n\rangle\subseteq \GL(\fh^*)$ is the group of permutations of $\{\gre_1,\ldots,\gre_{n}\}$, which is isomorphic to $S_n$. Denote a permutation $w\in S_n$ by its one-line notation $w=(w_1,\ldots,w_n)$, where $w(1)=w_1$ , $w(2)=w_2$, etc.


 \noindent \textbf{Type $C_n$}. Define the inner product $\langle x,y\rangle=x^tJy$, $x,y\in\C^{2n}$, where $J$ is the antidiagonal $2n\times 2n$ matrix whose top  $n$ antidiagonal entries are 1's and whose bottom $n$ antidiagonal entries are -1's. The symplectic group $\Sp_{2n}(\C)$ is equal to $G=\{g\in \GL_{2n}(\C)\mid \langle gu,gv\rangle = \langle u,v\rangle, u,v\in\C^{2n}\}$. The Lie algebra $\fg=\fs\fp_{2n}(\C)=\{a\in \fg\fl_{2n}(\C)\mid \langle au,v\rangle +\langle u,av\rangle=0, u,v\in\C^{2n}\}$.
 The set of diagonal matrices in $\fg$ forms a Cartan subalgebra
 \begin{equation*}
    \fh=\{\diag(a_1,\ldots,a_n,-a_n,\ldots,-a_1)\mid a_1,\ldots,a_n\in\C\}.
 \end{equation*}
For $1\leq i\leq n$, let $\gre_i\in\fh^*$ be the linear functional
 \begin{equation*}
    \gre_i(\diag(a_1,\ldots,a_n,-a_n,\ldots,-a_1))=a_i.
 \end{equation*}
 Then $\{\gre_1,\ldots,\gre_n\}$ forms a basis for $\fh^*$. The set of roots $\gP$ of $\fg$ relative to $\fh$ is $\{\pm\gre_i\pm \gre_j, 1\leq i\neq j\leq n\}\cup\{\pm 2\gre_i,i=1,\ldots,n\}$. The set \begin{equation*}
    \gD=\{\ga_1=\gre_1-\gre_2,\ldots,\ga_{n-1}=\gre_{n-1}-\gre_n\}\cup \{\ga_n=2\gre_n\}
\end{equation*}
forms a base of $\gP$, with respect to which the set of positive roots is
\begin{equation*}
    \gP^+=\{\gre_i\pm\gre_j, 1\leq i<j\leq n\}\cup \{2\gre_i,i=1,\ldots,n\}.
\end{equation*}
 The reflection $s_i$ along $\ga_i$ is given by:
\begin{equation*}
\begin{split}
    &s_i:
    \begin{cases}
        &\gre_i\mapsto \gre_{i+1}\\
        &\gre_{i+1}\mapsto\gre_{i}\\
        &\gre_j\mapsto\gre_j, j\neq i,i+1
    \end{cases},
    \quad
    \text{ if }i\in\{1,\ldots,n-1\}
    \\
    &s_n:
    \begin{cases}
        &\gre_n\mapsto -\gre_{n}\\
        &\gre_j\mapsto\gre_j, j\neq n
    \end{cases}
\end{split}
\end{equation*}
The Weyl group $W=\langle s_1,\ldots,s_n\rangle\subseteq \GL(\fh^*)$ is the group of permutations and sign changes of $\{\gre_1,\ldots,\gre_n\}$. More precisely, $W\cong S_n\ltimes\Z_2^{n}$.

The map $W\to S_{2n}$ given by
    $s_i\mapsto (i,i+1)(\bar{i+1},\bar{i})$, $i\neq n$, $s_n\mapsto (i,\bar{i})$,
is a monomorphism, identifying $W$ with
\begin{equation}\label{e.Weyl_Cn}
    W\cong \{(w_1,\ldots,w_{2n})\in S_{2n}\mid w_{\bar{\imath}}=\bar{w_i}, 1\leq i\leq n\}.
\end{equation}


\noindent \textbf{Type $B_n$}.
Define the inner product $\langle x,y\rangle=x^tJy$, $x,y\in\C^{2n+1}$, where $J$ is the antidiagonal $(2n+1)\times (2n+1)$ matrix all of whose antidiagonal entries are 1's, except for the entry in row and column $n+1$, which is 2. The odd orthogonal group $\SO_{2n+1}(\C)$ is equal to $G=\{g\in \GL_{2n+1}(\C)\mid \langle gu,gv\rangle = \langle u,v\rangle, u,v\in\C^{2n+1}\}$.The Lie algebra $\fg=\fs\fo_{2n+1}(\C)=\{a\in \fg\fl_{2n+1}(\C)\mid \langle au,v\rangle +\langle u,av\rangle=0, u,v\in\C^{2n+1}\}$. The set of diagonal matrices in $\fg$ forms a Cartan subalgebra
\begin{equation*}
    \fh=\{\diag(a_1,\ldots,a_n,0,-a_n,\ldots,-a_1)\mid a_1,\ldots,a_n\in\C\}.
\end{equation*}
For $1\leq i\leq n$, let $\gre_i\in\fh^*$ be the linear functional
 \begin{equation*}
    \gre_i(\diag(a_1,\ldots,a_n,0,-a_n,\ldots,-a_1))=a_i.
 \end{equation*}
 Then $\{\gre_1,\ldots,\gre_n\}$ forms a basis for $\fh^*$. The set of roots $\gP$ of $\fg$ relative to $\fh$ is $\{\pm\gre_i\pm \gre_j, 1\leq i\neq j\leq n\}\cup\{\pm\gre_i,i=1,\ldots,n\}$. The set \begin{equation*}
    \gD=\{\ga_1=\gre_1-\gre_2,\ldots,\ga_{n-1}=\gre_{n-1}-\gre_n\}\cup \{\ga_n=\gre_n\}
\end{equation*}
forms a base of $\gP$, with respect to which the set of positive roots is
\begin{equation*}
    \gP^+=\{\gre_i\pm\gre_j, 1\leq i<j\leq n\}\cup \{\gre_i,i=1,\ldots,n\}.
\end{equation*}

Since the roots in types $B_n$ and $C_n$ agree up to scalar multiples, they have the same Weyl group. Thus \eqref{e.Weyl_Cn} also gives an identification of the Weyl group in type $B_n$.

\noindent \textbf{Type $D_n$}.
Define the inner product $\langle x,y\rangle=x^tJy$, $x,y\in\C^{2n}$, where $J$ is the antidiagonal $2n\times 2n$ matrix all of whose antidiagonal entries are 1's. The even orthogonal group $\SO_{2n}(\C)$ is equal to $G=\{g\in \GL_{2n}(\C)\mid \langle gu,gv\rangle = \langle u,v\rangle, u,v\in\C^{2n}\}$. The Lie algebra $\fg=\fs\fo_{2n}(\C)=\{a\in \fg\fl_{2n}(\C)\mid \langle au,v\rangle +\langle u,av\rangle=0, u,v\in\C^{2n}\}$, where $\langle x,y\rangle=x^tJy$, $x,y\in\C^{2n}$.
The set of diagonal matrices in $\fg$ forms a Cartan subalgebra
 \begin{equation*}
    \fh=\{\diag(a_1,\ldots,a_n,-a_n,\ldots,-a_1)\mid a_1,\ldots,a_n\in\C\}.
 \end{equation*}
For $1\leq i\leq n$, let $\gre_i\in\fh^*$ be the linear functional
 \begin{equation*}
    \gre_i(\diag(a_1,\ldots,a_n,-a_n,\ldots,-a_1))=a_i.
 \end{equation*}
Then $\{\gre_1,\ldots,\gre_n\}$ forms a basis for $\fh^*$. The set of roots $\gP$ of $\fg$ relative to $\fh$ is $\{\pm\gre_i\pm \gre_j, 1\leq i\neq j\leq n\}$. The set \begin{equation*}
    \gD=\{\ga_1=\gre_1-\gre_2,\ldots,\ga_{n-1}=\gre_{n-1}-\gre_n,\ga_n=\gre_{n-1}+\gre_n\}
\end{equation*}
forms a base of $\gP$, with respect to which the set of positive roots is
\begin{equation*}
    \gP^+=\{\gre_i\pm\gre_j, 1\leq i<j\leq n\}.
\end{equation*}
 The reflection $s_i$ along $\ga_i$ is given by:
\begin{equation*}
\begin{split}
    &s_i:
    \begin{cases}
        &\gre_i\mapsto \gre_{i+1}\\
        &\gre_{i+1}\mapsto\gre_{i}\\
        &\gre_j\mapsto\gre_j, j\neq i,i+1
    \end{cases},
    \quad
    \text{ if }i\in\{1,\ldots,n-1\}
    \\
    &s_n:
    \begin{cases}
        &\gre_{n-1}\mapsto -\gre_{n}\\
        &\gre_n\mapsto -\gre_{n-1}\\
        &\gre_j\mapsto\gre_j, j\neq n-1,n
    \end{cases}
\end{split}
\end{equation*}
The Weyl group $W=\langle s_1,\ldots,s_n\rangle\subseteq \GL(\fh^*)$ is the group of permutations and even number of sign changes of $\{\gre_1,\ldots,\gre_n\}$. More precisely, $W\cong S_n\ltimes\Z_2^{n-1}$.

The map $W\to S_{2n}$ given by
    $s_i\mapsto (i,i+1)(\bar{i+1},\bar{i})$, $i\neq n$, $s_n\mapsto (n,n+1)(n-1,n)(n+1,n+2)(n,n+1)$,
is a monomorphism, identifying $W$ with
\begin{equation}\label{e.Weyl_Dn}
    W\cong\left\{(w_1,\ldots,w_{2n})\in S_{2n}\mid
            w_{\bar{\imath}}=\bar{w_i}, 1\leq i\leq n,
             \#\{i<n\mid w_i>n\}\text{ is even}.
            \right\}
\end{equation}

\section{Restriction formulas and opposite Schubert varieties} \label{s.appendix_restriction-opp}
In this section we explain the relation between the restriction formulas for
Schubert varieties and for opposite Schubert varieties.  We have included this
because some references use the opposite Schubert varieties---
indeed, the formulas in \cite{Gra:02} are for $ i_x^*[\co_{X_w}] $,  where
 $X_w = \overline{B \cdot wB}$ is the opposite Schubert
 variety to $X^w$.  However, the formula of Theorem \ref{t.pullback} can be obtained
 from the formula for opposite Schubert varieties by using Proposition \ref{p.translate} below.  To prove this proposition
 we need two lemmas.  Let $*$ denote the involution of $R(T)$
 defined by $*(e^{\lambda}) = e^{- \lambda}$.   If $T$ acts on any scheme $M$, we can
 define a new action $\odot$ of $T$ by the rule $t \odot m = t^{-1}m$.  Write $K_T(M,\odot)$ to
 denote the equivariant $K$-theory of $M$ with the $\odot$ action.  Any
 coherent sheaf on $M$ which is equivariant with respect to the original $T$-action is
 equivariant with respect to the $\odot$ action.  There is a map $K_T(M) \to K_T(M,\odot)$,
 $\xi \mapsto \xi_{\odot}$, taking the class of a $T$-equivariant sheaf $\cf$ to the class
 of the same sheaf, but viewed as equivariant with respect to the $\odot$ action.  Observe
 that if $M$ is a point, then $K_T(M) = R(T)$ and $\xi_{\odot} = *\xi$.

 \begin{Lem} \label{l.dual}
 Suppose $T$ acts on a smooth scheme $X$.  Let $x \in X^T$ and let $i: \{ x \} \to X$ denote the inclusion.
 Then for any $\xi \in K_T(M)$,
 $$
 i^*(\xi_{\odot}) = *( i^*\xi).
 $$
 \end{Lem}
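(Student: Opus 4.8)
The plan is to deduce the identity from the naturality of the change-of-action construction $\xi \mapsto \xi_\odot$ with respect to equivariant pullback, together with the already-recorded fact that on a point this construction is the involution $*$. First I would note that because $x$ is a $T$-fixed point, the inclusion $i \colon \{x\} \to X$ is equivariant both for the original $T$-action and for the modified action $\odot$; hence it induces a pullback $i^* \colon K_T(X,\odot) \to K_T(\{x\},\odot) = R(T)$, compatible with the pullback $i^* \colon K_T(X) \to R(T)$. The goal is then the commutativity of the square whose two rows are the change-of-action maps $\xi \mapsto \xi_\odot$, one on $K_T(X)$ and one on $K_T(\{x\}) = R(T)$, and whose two vertical maps are $i^*$. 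By the observation preceding the lemma the bottom row is exactly $*$, so commutativity of this square is precisely the assertion $i^*(\xi_\odot) = *(i^*\xi)$.

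To prove commutativity I would reduce to vector bundles: since $X$ is smooth, $K_T(X)$ is generated as an abelian group by classes $[\ce]$ of $T$-equivariant vector bundles, and both composites in the square are additive, so it suffices to treat $\xi = [\ce]$. For such a class, $i^*\xi$ is the class of the fiber $\ce_x$ viewed as a $T$-representation, while $\xi_\odot$ is represented by the same bundle $\ce$ carrying the $\odot$-equivariant structure. The point to verify is that forming the fiber at $x$ commutes with the relabeling $\cf \mapsto \cf_\odot$: this relabeling leaves the underlying sheaf untouched and only reinterprets the equivariant structure, so its effect on $i^*\ce = \ce_x$ is simply to replace the $T$-action by the $\odot$-action, which is exactly $i^*$ of $\ce_\odot$. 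Thus $i^*(\xi_\odot) = (i^*\xi)_\odot$, and applying the point-case identity $(i^*\xi)_\odot = *(i^*\xi)$ finishes the argument.

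The only genuine content is this last commutation---that taking the fiber at $x$ commutes with the change of action---which is essentially tautological, since the two operations act on disjoint parts of the data (the underlying sheaf versus its equivariant structure); it is precisely here that the equivariance of $i$ for the $\odot$-action is used. I therefore expect no real obstacle beyond keeping track of the fact that the fiber of $\ce_\odot$ at $x$, as an element of $R(T)$, negates the weights of $\ce_x$, which is exactly the identity $\xi_\odot = *\xi$ in the point case.
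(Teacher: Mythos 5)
Your proposal is correct and follows essentially the same route as the paper's proof: both reduce to classes of $T$-equivariant vector bundles (valid since $X$ is smooth), observe that taking the fiber at the fixed point $x$ commutes with the change-of-action relabeling $\xi \mapsto \xi_{\odot}$, and then invoke the point-case identity $\xi_{\odot} = *\xi$ in $R(T)$. The only cosmetic difference is that the paper writes $\xi$ explicitly as a finite sum $\sum_j a_j [V^j]$ while you invoke additivity to treat a single bundle, which amounts to the same thing.
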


 \begin{proof}
 If we write $\xi = \sum_j a_j [V^j]$, where each $V^j$ is a $T$-equivariant vector bundle, then for each j,
 the fiber $V^j_x$ is a representation of $T$.  We have
 $i^* \xi = \sum a_j [V^j_x]$ and
 $$
 i^*(\xi_{\odot}) = \sum a_j ([V^j_x]_{\odot}) = * ( \sum a_j ([V^j_x])) =  *( i^*\xi).
 $$
 \end{proof}

\begin{Lem} \label{l.involution}
There exists an involution $\Psi: G \to G$ such that $\Psi(t) = t^{-1}$ for $t \in T$, $\Psi(B) = B^-$, and
$\psi(nT) = nT$ for $n \in N_G(T)$.
\end{Lem}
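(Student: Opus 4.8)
The plan is to take $\Psi$ to be the \emph{Chevalley involution} attached to the pinning $(T,B)$, and then to deduce the three required properties. Concretely, I would fix a Chevalley basis $\{h_i\}\cup\{e_\alpha\}_{\alpha\in\Phi}$ of $\fg$ and define a linear map $\theta\colon\fg\to\fg$ by $\theta(h)=-h$ for $h\in\fh$ and $\theta(e_\alpha)=-e_{-\alpha}$ for every root $\alpha$. First I would check that $\theta$ respects the Chevalley structure constants, so that $\theta$ is a Lie algebra automorphism, and that $\theta^2=\id$, since $\theta^2(e_\alpha)=\theta(-e_{-\alpha})=e_\alpha$ and $\theta^2(h)=h$. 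Passing to the group, $\theta$ integrates to an automorphism $\Psi$ of $G$ (first on the simply connected cover, where Lie algebra automorphisms lift, and then checking that $\Psi$ preserves the relevant central subgroup so that it descends to $G$); alternatively one realizes $\Psi$ directly as $\mathrm{Int}(\dot w_0)\circ\tau$, where $\tau$ is the pinning-preserving automorphism inducing the opposition diagram automorphism $-w_0$ and $\dot w_0\in N_G(T)$ represents the longest Weyl element.

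Properties (i) and (ii) should then be immediate from the construction. Since $\theta=-\id$ on $\fh$, the automorphism $\Psi$ restricts to inversion on $T$, i.e.\ $\Psi(t)=t^{-1}$, and in particular $\Psi(T)=T$. Because $\theta$ carries $\fg_\alpha$ to $\fg_{-\alpha}$, it sends the positive root spaces (which span the nilradical of $\fb$) to the negative ones, whence $\Psi(B)=B^-$.

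The remaining property $\Psi(nT)=nT$ for $n\in N_G(T)$ is where the real content lies, and I would argue it as follows. Since $\Psi(T)=T$, each $n\in N_G(T)$ has $\Psi(n)\in N_G(T)$, representing some $\Psi_*(w)\in W$ if $n$ represents $w$. For $t\in T$ I would compute, using that $\Psi$ is an involutive homomorphism with $\Psi|_T$ inversion,
\[
\Psi(n)\,t\,\Psi(n)^{-1}=\Psi\bigl(n\,\Psi(t)\,n^{-1}\bigr)=\Psi\bigl((n t n^{-1})^{-1}\bigr)=n t n^{-1},
\]
so $\Psi_*(w)$ and $w$ act identically on $T$. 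As $W$ acts faithfully on $\ft^*$ (the roots span $\ft^*$ for $G$ semisimple), this forces $\Psi_*(w)=w$, whence $\Psi(nT)=\Psi(n)T=nT$.

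The main obstacle I anticipate is bookkeeping at the group level rather than anything conceptual: verifying that the sign conventions in $\theta$ make $\Psi$ an honest involution, and that the Lie algebra automorphism integrates to $G$ itself (and not merely to its adjoint or simply connected form). This can be handled either through the canonical lifting of $\theta$ together with its compatibility with the center, or, in the classical cases of interest, by writing $\Psi$ explicitly (e.g.\ a transpose-inverse conjugated by the antidiagonal form $J$ of Appendix \ref{s.appendix_roots}), for which the involution property and the three asserted properties can be checked by direct matrix computation.
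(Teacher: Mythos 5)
Your proposal is correct, and its construction of $\Psi$ is essentially the paper's: both start from the Lie algebra involution that acts by $-1$ on $\ft$ and carries each root space $\fg_{\ga}$ to $\fg_{-\ga}$ (the paper cites \cite[Proposition 14.3]{Hum:72} for its existence; this is exactly the Chevalley involution you describe), lift it to the simply connected cover, and descend to $G$ because the relevant central subgroup lies in $\tilde{T}$ and is stable under inversion; properties (i) and (ii) then come out the same way in both arguments. Where you genuinely diverge is the third property, $\Psi(nT)=nT$, which is where the paper does its real work: it exhibits explicit representatives, taking for each simple root $\ga$ the element $n_{\ga}=\exp\bigl(\tfrac{\pi}{2}(x_{\ga}-x_{-\ga})\bigr)$, invoking Samelson's computation to see that $\exp(-J_{\ga})=\Psi(n_{\ga})$ also represents $s_{\ga}$, and then writing an arbitrary coset $nT$ as a product $n_{\ga_1}\cdots n_{\ga_k}T$. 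You instead observe that, since $\Psi$ restricts to inversion on $T$, one has $\Psi(n)\,t\,\Psi(n)^{-1}=\Psi\bigl((ntn^{-1})^{-1}\bigr)=ntn^{-1}$ for all $t\in T$, so $n^{-1}\Psi(n)$ centralizes $T$ and hence lies in $T$ (faithfulness of $W$ on $\ft^*$, equivalently $Z_G(T)=T$). This is cleaner and strictly more general: it shows property (iii) is a formal consequence of property (i) for \emph{any} automorphism inverting $T$, with no explicit Weyl representatives and no ${\mathfrak sl}_2$/compact-group computation; the paper's route, by contrast, is more concrete and self-contained at the level of $\SL_2$-calculations and avoids appealing to $Z_G(T)=T$. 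One small caveat: your parenthetical alternative $\Psi=\mathrm{Int}(\dot{w}_0)\circ\tau$ requires a careful choice of $\dot{w}_0$ to actually square to the identity, but since your main construction does not rely on it, this does not affect the proof.
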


\begin{proof}
Given a root $\gb$ of $\fg$, let $\fg_{\beta}$ denote the corresponding root space.
For each simple root $\ga$ of $\fg$ we can find elements $x_{\ga} \in \fg_{\ga}$, $x_{-\ga} \in \fg_{-\ga}$, and
$h_{\ga} \in \ft$ such that $x_{\ga}, \{h_{\ga}, x_{-\ga} \}$ is an ${\mathfrak sl}_2$-triple (see \cite[Section 2.4]{Sam:90}.
There is an involution $\psi$ of $\fg$ which acts by multiplication by $-1$ on $\ft$ such that if $\ga$ is any
simple root, then
$\psi(x_{\ga}) = x_{-\ga}$ (see \cite[Proposition 14.3]{Hum:72}).  It follows that $\psi(\fb) = \fb^-$.
 Let $\tilde{G}$ denote the simply
 connected algebraic group with Lie algebra $\fg$, and let
 $\tilde{T}$ denote the subgroup of $\tilde{G}$ with Lie algebra $\ft$.  Because $\tilde{G}$ is simply connected,
 $\psi$ lifts to an automorphism $\Psi$ of $\tilde{G}$.  Moreover, since $\psi$ acts by multiplication
 by $-1$ on $\ft$, $\Phi$ takes any element of $\tilde{T}$ to its inverse.  The group $G$ is isomorphic to
 $G/Z_1$, where $Z_1$ is a subgroup of the center $Z$ of $\tilde{G}$.  Since $Z_1 \subset \tilde{T}$,
 and $Z_1$ is closed under inverses, $\Psi(Z_1) = Z_1$.  Therefore $\Psi$ descends to an automorphism
 (also denoted $\Psi$) of $G$.  The assertions $\Psi(t) = t^{-1}$ for $t \in T$, $\Psi(B) = B^-$ follow from
 the corresponding properties of $\psi$.  Finally, let $J_{\ga} = \frac{\pi}{2}(x_{\ga} - x_{-\ga})$.
 The simple reflection $s_{\ga}$ in $W$ is represented by
 the element $n_{\ga} = \exp(J_{\ga}) \in N_G(T)$ (see \cite[Section 2.15]{Sam:90}).
 The argument in Samelson shows that $s_{\ga}$ is also represented by the element $ \exp(-J_{\ga}) = \Psi(n_{\ga})$.
 Hence $\Psi(n_{\ga} T) = n_{\ga} T$.   Given any $n \in N_G(T)$, $nT = n_{\ga_{1}} n_{\ga_2} \cdots n_{\ga_k} T$
 for some simple roots $\ga_1, \ldots, \ga_k$.  It follows that $\Psi(nT) = nT$, as claimed.
 \end{proof}

 Let $\tilde{X} = G/B^-$, $\tilde{X}_w= \overline{B^- \cdot w B^-} \subset \tilde{X}$.  Let $\tilde{i}_x: \{ pt \} \to \tilde{X}$ be the map $\tilde{i}_x(pt) = xB^-$.

 \begin{Prop} \label{p.translate}
 $$
  i_x^*[\co_{X^w}]  = *(i_{xw_0}^*[\co_{X_{ww_0}}]).
  $$
 \end{Prop}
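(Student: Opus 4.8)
The plan is to construct a single $T$-equivariant isomorphism of $G/B$ that carries $X^w=\overline{B^-\cdot wB}$ onto the opposite Schubert variety $X_{ww_0}=\overline{B\cdot ww_0B}$ while converting the standard $T$-action into the $\odot$-action, and then to invoke Lemma \ref{l.dual} to account for the involution $*$. First I would use the involution $\Psi$ of Lemma \ref{l.involution} to define $\psi\colon G/B\to G/B^-$ by $gB\mapsto\Psi(g)B^-$; this is well defined since $\Psi(B)=B^-$. Because $\Psi(t)=t^{-1}$ for $t\in T$, a direct computation gives $\psi(t\cdot gB)=t^{-1}\Psi(g)B^-=t\odot\psi(gB)$, so $\psi$ intertwines the standard action on $G/B$ with the $\odot$-action on $G/B^-$. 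Using $\Psi(B^-)=B$ and $\Psi(wT)=wT$ (the last clause of Lemma \ref{l.involution}, so that $\Psi(w)B^-=wB^-$), I would check that $\psi(X^w)=\overline{B\cdot wB^-}$ and $\psi(xB)=xB^-$.

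Next I would compose with the isomorphism $r_{w_0}\colon G/B^-\to G/B$, $gB^-\mapsto gw_0B$, which is well defined because $w_0Bw_0^{-1}=B^-$ and is $T$-equivariant for the ordinary left action, hence carries the $\odot$-action to the $\odot$-action. Setting $\phi=r_{w_0}\circ\psi$, I obtain a $T$-equivariant isomorphism from $(G/B,\mathrm{std})$ to $(G/B,\odot)$ satisfying $\phi(X^w)=\overline{B\cdot ww_0B}=X_{ww_0}$ and $\phi(xB)=xw_0B$. Since $\phi$ is an isomorphism of varieties carrying the reduced subscheme $X^w$ onto $X_{ww_0}$, pullback of structure sheaves gives $\phi^*[\co_{X_{ww_0}}]_\odot=[\co_{X^w}]$ in $K_T(G/B)$.

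Finally I would assemble the pieces. Functoriality of pullback together with the identity $\phi\circ i_x=(i_{xw_0})_\odot$, the inclusion of the fixed point $xw_0B$ into $(G/B,\odot)$, yields
\[
i_x^*[\co_{X^w}]=i_x^*\phi^*[\co_{X_{ww_0}}]_\odot=\big((i_{xw_0})_\odot\big)^*[\co_{X_{ww_0}}]_\odot .
\]
Applying Lemma \ref{l.dual} with the smooth variety $G/B$, the fixed point $xw_0B$, and $\xi=[\co_{X_{ww_0}}]$ then converts the right-hand side into $*(i_{xw_0}^*[\co_{X_{ww_0}}])$, which is exactly the asserted identity.

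The main obstacle I anticipate is bookkeeping rather than anything deep: one must keep the two $T$-actions strictly separate, confirm that each of $\psi$ and $r_{w_0}$ really intertwines the intended actions so that their composite goes from $\mathrm{std}$ to $\odot$, and verify that $\phi$ sends $X^w$ exactly onto $X_{ww_0}$ and the fixed point $x$ onto $xw_0$. This last point is where the precise properties $\Psi(B)=B^-$ and $\Psi(nT)=nT$ of Lemma \ref{l.involution} are indispensable, since without them $\phi$ would neither be well defined on cosets nor preserve the Weyl-group labels of the $T$-fixed points.
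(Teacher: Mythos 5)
Your proof is correct and uses essentially the same ingredients as the paper: the Chevalley involution of Lemma \ref{l.involution}, right translation by $w_0$, and Lemma \ref{l.dual} to produce the $*$. The only difference is organizational — you compose the two isomorphisms into a single $T$-equivariant map $(G/B,\mathrm{std})\to(G/B,\odot)$ carrying $X^w$ to $X_{ww_0}$, whereas the paper keeps them separate and compares both pullbacks through the intermediate flag variety $G/B^-$ and its Schubert varieties $\tilde{X}_u$.
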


 \begin{proof}
 The map $\phi: G/B \to G/B^-$ defined by $\phi(gB) = g w_0 B^-$ is a $G$-equivariant isomorphism.
 Since $\phi(xB) = x w_0 B^-$ and $\phi(X^w) = \tilde{X}_{ww_0}$, we have
 \begin{equation} \label{e.translate1}
 i_x^*[\co_{X^w}] = \tilde{i}_{xw_0}[\co_{\tilde{X}_{ww_0}}].
 \end{equation}
Let $\Psi$ denote the involution of $G$ from Lemma \ref{l.involution}.
Since $\Psi(B) = B^-$, there is an induced map (which we also denote by
 $\Psi$) $G/B \to G/B^-$, $gB \mapsto \psi(g)B^-$.
 Since $\Psi(nT) = nT$ and $\Psi(B) = B^-$,
 for any $u \in W$,
 $\Psi(X_u) = \tilde{X}_u$.
 The map $\Psi$ is $T$-equivariant if $T$ acts
 by left multiplication on $G/B^-$, and by $t \odot gB = t^{-1} gB$ on $G/B$.
 Therefore, $\Psi^*[\co_{\tilde{X}_u}] = [\co_{X_u}]_{\odot}$, where the subscript
 $\odot$ indicates that we are using the $\odot$ action of $T$.  Therefore,
 \begin{equation} \label{e.translate2}
 \tilde{}i^*_x[\co_{\tilde{X}_u}] = i^*( [\co_{X_u}]_{\odot}) =  *(i^* [\co_{X_u}]),
 \end{equation}
where the second equality follows from Lemma \ref{l.dual}.
The proposition follows from \eqref{e.translate1} and  \eqref{e.translate2}, taking
$u = w w_0$.
\end{proof}

\bibliographystyle{amsalpha}
\bibliography{refs_eyd}

\end{document}